%% LyX 2.3.3 created this file.  For more info, see http://www.lyx.org/.
%% Do not edit unless you really know what you are doing.
\documentclass[11pt,oneside,english]{amsart}
\usepackage[T1]{fontenc}
\usepackage[latin9]{inputenc}
\usepackage{geometry}
\geometry{verbose,tmargin=30mm,bmargin=30mm,lmargin=30mm,rmargin=30mm}
\pagestyle{plain}
\usepackage{amstext}
\usepackage{amsthm}
\usepackage{amssymb}

\makeatletter
%%%%%%%%%%%%%%%%%%%%%%%%%%%%%% Textclass specific LaTeX commands.
\numberwithin{equation}{section}
\numberwithin{figure}{section}
\theoremstyle{plain}
\newtheorem{thm}{\protect\theoremname}[section]
\theoremstyle{plain}
\newtheorem{cor}[thm]{\protect\corollaryname}
\theoremstyle{remark}
\newtheorem{rem}[thm]{\protect\remarkname}
\theoremstyle{plain}
\newtheorem{prop}[thm]{\protect\propositionname}
\theoremstyle{plain}
\newtheorem{lem}[thm]{\protect\lemmaname}
\theoremstyle{definition}
\newtheorem{example}[thm]{\protect\examplename}
\theoremstyle{definition}
\newtheorem{defn}[thm]{\protect\definitionname}

%%%%%%%%%%%%%%%%%%%%%%%%%%%%%% User specified LaTeX commands.

\def\makebbb#1{
    \expandafter\gdef\csname#1\endcsname{
        \ensuremath{\Bbb{#1}}}
}
\makebbb{R}
\makebbb{N}
\makebbb{Z}
\makebbb{C}
\makebbb{G}
\makebbb{Q}
\makebbb{H}
\makebbb{P}
\makebbb{B}
\makebbb{K}
\makebbb{E}

\makeatother

\usepackage{babel}
\providecommand{\corollaryname}{Corollary}
\providecommand{\definitionname}{Definition}
\providecommand{\examplename}{Example}
\providecommand{\lemmaname}{Lemma}
\providecommand{\propositionname}{Proposition}
\providecommand{\remarkname}{Remark}
\providecommand{\theoremname}{Theorem}

\begin{document}
\title{The Sinkhorn algorithm, parabolic optimal transport and geometric
Monge-Ampère equations}
\author{Robert J. Berman}
\begin{abstract}
We show that the discrete Sinkhorn algorithm - as applied in the setting
of Optimal Transport on a compact manifold - converges to the solution
of a fully non-linear parabolic PDE of Monge-Ampère type, in a large-scale
limit. The latter evolution equation has previously appeared in different
contexts (e.g. on the torus it can be be identified with the Ricci
flow). This leads to algorithmic approximations of the potential of
the Optimal Transport map, as well as the Optimal Transport distance,
with explicit bounds on the arithmetic complexity of the construction
and the approximation errors. As applications we obtain explicit schemes
of nearly linear complexity, at each iteration, for optimal transport
on the torus and the two-sphere, as well as the far-field antenna
problem. Connections to Quasi-Monte Carlo methods are exploited.
\end{abstract}

\maketitle

\section{Introduction}

The theory of Optimal Transport \cite{v1,v2} is used in a multitude
of applications ranging from economics, statistics, cosmology, geometric
optics and meteorology to more recent applications in data sciences
(including machine learning, vision, graphics and imaging; see \cite{so,m-p,s-d---}).
In the last years there has been a flurry of numerical work applying
the Sinkhorn algorithm \cite{s} (aka the Iterative Proportional Fitting
Procedure \cite{r}) as a fast and efficient way of computing approximations
to optimal transport maps, or equivalently, solutions to certain geometric
Monge-Ampère type equations. This is motivated by applications to
machine learning \cite{cu} (concerning optimal transport in Euclidean
$\R^{n})$ and computer graphics and image processing \cite{s-d---}
(where the general setting of optimal transport on Riemannian manifolds
is considered). The key advantage of the Sinkhorn algorithm in this
context is its favorable large-scale computational properties (parallelization,
linear time convergence, etc \cite{b-g-c-N}).

The main aim of the present paper is to show that, in a large-scale
limit, the Sinkhorn  algorithm converges towards the solution of a
parabolic PDE of Monge-Ampère type, which, incidentally, previously
has appeared in \cite{s-s,k-s-w,ki} and is called the \emph{parabolic
optimal transport equation }in \cite{k-s-w}. The convergence is shown
with explicit error estimates. This leads, in particular, to the first
constructive approximation of the potential of the optimal transport
map with explicit bounds on the time-complexity of the construction
\emph{and} the approximation errors introduced by the discretization.

\subsection{Background and setup}

\subsubsection{\label{subsec:The-Sinkhorn-algorithm}The Sinkhorn iteration}

Let $p$ and $q$ be two vectors in $\R_{+}^{n}$ whose entries sum
to one. Given any matrix $K\in\R_{+}^{N}\times\R_{+}^{N}$ there exists,
by Sinkhorn's theorem \cite{s}, two diagonal positive matrices $D_{a}$
and $D_{b}$ with diagonal vectors $a$ and $b$ in $\R_{+}^{N}$
such that the matrix 
\begin{equation}
B:=D_{b}KD_{a}\label{eq:def of B}
\end{equation}
 has the property that the rows sum to $p$ and the columns sum to
$q.$ The diagonal matrices $D_{a}$ and $D_{b}$ are uniquely determined
up to scaling $D_{a}$ and $D_{b}^{-1}$ by the same positive number.
Moreover, $B$ can be obtained as the limit of the algorithm defined
by alternately normalizing the rows and columns of the matrix. In
other words,
\[
B=\lim_{m\rightarrow\infty}B(m),\,\,\,B(m)=D_{b(m)}KD_{a(m)},
\]
 where the pair of positive vectors $(a(m),b(m))$ are defined by
the following recursion, formulated in terms of matrix vector multiplications
and component-wise division of vectors:
\[
b(m)=p/K\cdot a(m)
\]
\[
a(m+1)=q/K^{T}\cdot b(m)
\]
with initial data $a(0)$ taken as the vector with entries $1.$ In
fact, any initial positive vector $a(0)$ will do and the vectors
$a(m)$ and $b(m)$ are convergent as $m\rightarrow\infty$ (without
any need of scaling) as follows from Theorem \ref{thm:conv of u m in general setting}.
The corresponding iteration 
\[
a(m+1):=q/K^{T}\cdot\left(p/K\cdot a(m)\right)
\]
will be called the Sinkhorn\emph{ iteration.} Its fixed point (uniquely
determined up to scaling) is thus the vector $a$ appearing in formula
\ref{eq:def of B}.

The same algorithm has appeared in various fields (economics, traffic
planning, statistics,...; see \cite{m-p}). In its most general (infinite
dimensional) form, known as the \emph{Iterative Proportional Fitting
Procedure} in the statistics literature, the roles of $p$ and $q$
are played by probability measures on two (possible non-finite) topological
spaces. In this setting the corresponding convergence of $B(m)$ towards
a limit $B$ was established in \cite{r} using a maximum entropy
characterization of $B,$ which in the discrete setting above says
that $B$ is the unique element realizing the infimum 
\[
\inf_{\gamma\in\Pi(p,q)}\mathcal{I}(\gamma|K)
\]
where $\mathcal{I}$ denotes the Kullback-Leibler divergence of $\gamma$
relative to $K,$ when $\gamma$ and $K$ are identified with measures
on the discrete product $\{1,...,N\}^{2}$ and $\Pi(p,q)$ denotes
the set of all matrices $\gamma$ in $\R_{+}^{N}\times\R_{+}^{N}$
with row sum $p$ and column sum $q$ (i.e. the corresponding measures
on $\{1,...,N\}^{2}$ have marginals $p$ and $q,$ respectively).
Since $-\mathcal{I}(\gamma|K)$ is the ``physical'' entropy of $\gamma$
relative to $K$ this can indeed be viewed as a maximum entropy characterization
of $B.$ An alternative proof of the convergence follows from Theorem
\ref{thm:conv of u m in general setting} below, which also shows
that $a(m)$ and $b(m)$ have unique limits (determined by the initial
value $a(0)).$

\subsubsection{\label{subsec:Discrete-optimal-transport}Discrete optimal transport}

\label{subsec:Now-replace-}Now replace $K$ with a family of matrices
$K_{\epsilon}$ of the form 
\[
(K_{\epsilon})_{ij}=e^{-\epsilon^{-1}C_{ij}},
\]
for a given matrix $C_{ij}$ and parametrized by a positive number
$\epsilon.$ Then the corresponding matrix $B_{\epsilon}\in\Pi(p,q)$
furnished by Sinkhorn's theorem converges, as $\epsilon\rightarrow0,$
to a matrix $B_{0}$ realizing the infimum 
\begin{equation}
\mathcal{C}:=\inf_{\gamma\in\Pi(p,q)}\left\langle C,\gamma\right\rangle .\label{eq:def of C intro}
\end{equation}
In the terminology of discrete optimal transport theory \cite{v1,m-p}
this means that $B_{0}$ is an \emph{optimal transport plan (coupling)
between $p$ and $q,$ with respect to the cost matrix $C.$} The
convergence follows from the maximum entropy characterization of $B_{\epsilon}$
(recalled in the previous section), which reveals that $B_{\epsilon}$
realizes the perturbed minimum 
\[
\mathcal{C}_{\epsilon}:=\inf_{\gamma\in\Pi(p,q)}\left\langle c,\gamma\right\rangle +\epsilon\mathcal{I}(\gamma|I).
\]
 While the matrix $B_{0}$ is sparse (and is typically supported on
the graph of a transport map) the approximation $B_{\epsilon}$ always
has full support (by Sinkhorn's theorem) and is thus more ``regular''
than $B_{0}.$ Accordingly, the small parameter $\epsilon$ is sometimes
referred to as the\emph{ entropic regularization parameter.} This
is illustrated by the simulations in \cite{b-g-c-N} for the case
when $p$ and $q$ represent the discretization of two probability
measures on the unit-interval in $\R,$ using a large number $N$
of points and with $C_{ij}$ the cost matrix defined by to the squared
distance function on $\R.$ When $\epsilon$ is taken to be of the
order $1/N$ \cite[Fig 1]{b-g-c-N} shows how the discrete probability
measures on $\R\times\R$ appear as smoothed out versions of the graph
of the corresponding optimal transport map. 

It should also be pointed out that the entropy minimization problem
above can be traced back to the work by Schrödinger on Quantum Mechanics
in the 30s \cite{schr} (see the survey \cite{le}, where the connection
to optimal transport is emphasized). 

\subsubsection{\label{subsec:Discretization-of-Optimal}Discretization of Optimal
Transport on the torus}

Let now $X$ be a compact manifold endowed with a cost function $c(x,y).$
To keep things as simple as possible we will start by taking the manifold
$X$ to be the $n-$dimensional torus 
\[
T^{n}:=(\frac{\R}{\Z})^{n}
\]
endowed with the standard distance function $d_{T^{n}}(x,y),$ induced
from the Euclidean distance function on $\R^{n}.$ Let $\mu$ and
$\nu$ be two probability measure on $T^{n}$ (which thus correspond
to two periodic measures on $\R^{n})$ with Hölder continuous and
strictly positive densities $e^{-f}$ and $e^{-g},$ respectively
\begin{equation}
\mu=e^{-f}dV,\,\,\,\nu=e^{-g}dV\label{eq:mu and nu in terms of f and g intro}
\end{equation}
where $dV$ is the Riemannian normalized volume form on $T^{n}.$
Define the\emph{ cost function} $c(x,y)$ by 
\[
c(x,y):=d_{T^{n}}(x,y)^{2}/2.
\]
As is well-known, a continuous self-map $F$ of $T^{n}$ transporting
(pushing forward) $\mu$ to $\nu$ is \emph{optimal} with respect
to this cost function, i.e. the corresponding transport plan $\gamma_{F}:=(I\times F)_{*}\mu$
realizes the infimum 
\begin{equation}
d^{2}(\mu,\nu)/2:=\inf_{\gamma\in\Pi(\mu,\nu)}\left\langle c,\gamma\right\rangle ,\label{eq:OT distance intro}
\end{equation}
if and only if $F$ can be expressed in terms of a \emph{potential}
$u\in C^{2}(T^{n}):$
\[
F(x):=x+\nabla u(x),\,\,\,T^{n}\rightarrow T^{n},
\]
which is strictly\emph{ quasi-convex} in the sense that the symmetric
matrix $\nabla^{2}u+I$ is positive definite:
\[
\nabla^{2}u+I>0
\]
(we identify $u$ with a $\Z^{n}-$periodic function on $\R^{n}$
so that $F(x)$ descends to define a self-map of $T^{n})$ . The function
$u$ is uniquely determined, up to an additive constant, by the following
Monge-Ampère equation 
\begin{equation}
\exp(-g(x+\nabla u(x))\det(I+\nabla^{2}u(x))=\exp(-f(x)).\label{eq:MA eq intro}
\end{equation}
We recall that the distance $d(\mu,\nu)$ between $\mu$ and $\nu,$
defined by formula \ref{eq:OT distance intro}, is usually called
the \emph{Wasserstein $L^{2}-$distance} or the \emph{Optimal Transport
distance. }

\subsection*{\label{subsec:Main-results-in-the torus}Main results in the torus
setting }

For notational reasons it will be convenient to express the entropic
regularization parameter as
\[
\epsilon:=k^{-1}
\]
 for $k$ a positive integer (but the results apply to any real parameter
$k\geq k_{0}>0).$ For each $k$ we fix a positive integer $N_{k}$
and denote by $\Lambda_{k}$ the corresponding discrete torus in $T^{n}$
with $N_{k}$ points. In other words, $\Lambda_{k}$ is the grid on
$T^{n}$ with edge-length $N_{k}^{-1/n}:$
\[
\Lambda_{k}:=(\frac{(N_{k}^{-1/n}\Z)}{\Z})^{n}\subset T^{n}.
\]
We will assume that 
\[
\lim_{k\rightarrow\infty}N_{k}=\infty.
\]
Denote by $p^{(k)}$ and $q^{(k)}$ the corresponding discrete approximations
in $\R^{N_{k}}$ of $\mu$ and $\nu,$ defined by the normalized values
of the densities of $\mu$ and $\nu$ at the points in $\Lambda_{k}.$
Defining a sequence of $N_{k}\times N_{k}$ matrices $K^{(k)}$ by
\begin{equation}
K_{ij}^{(k)}:=\mathcal{K}^{(k)}(x_{i}^{(k)},x_{j}^{(k)}),\,\,\,\mathcal{K}^{(k)}(x,y):=e^{-kd(x,y)^{2}/2}\label{eq:def of A k ij intro}
\end{equation}
and applying Sinkhorn's theorem to the triple $(K^{(k)},p^{(k)},q^{(k)})$
furnishes two positive vectors $a^{(k)}$ and $b^{(k)}$ in $\R^{N_{k}},$
uniquely determined by the normalization condition that $a_{i_{k}}^{(k)}=0$
for the index $i_{k}$ corresponding to the point $x_{i_{k}}=0$ in
$\Lambda_{k}.$ 

Our first result shows that the potential $u$ for the optimal transport
problem between $\mu$ and $\nu$ can be recovered from the positive
vectors $a^{(k)}$ and $b^{(k)},$ furnished by the Sinkhorn theorem,
i.e. the fixed points of the corresponding iteration:
\begin{thm}
\label{thm:conv in static torus setting intr}(Static case). If $x_{i_{k}}^{(k)}$
is a sequence of points in the discrete torus $\Lambda_{k}$ converging
to the point $x$ in the torus $T^{n},$ as $k\rightarrow\infty,$
then 
\[
-\lim_{k\rightarrow\infty}k^{-1}\log a_{i_{k}}^{(k)}=u(x)
\]
 where $u$ is the unique optimal transport potential solving the
Monge-Ampère equation \ref{eq:MA eq intro} and normalized by $u(0)=0.$ 
\end{thm}

This convergence result should come at no surprise and it holds in
a very general setting (see Theorems \ref{Thm:weak conv of fixed points towards optimal transport plans},
\ref{thm:Static conv in Riem setting} and \ref{thm:non-cpt static}).
But the main point of the present paper is that the Sinkhorn algorithm
itself, when viewed as a discrete dynamical system for the positive
vectors $a_{i_{k}}^{(k)},$ also admits a continuous large-scale limit
$u_{t}(x),$ evolving according to  the following fully non-linear
parabolic PDE:
\begin{equation}
\frac{\partial u_{t}(x)}{\partial t}=\log\det(I+\nabla^{2}u_{t}(x))-g(x+\nabla u_{t}(x))+f(x),\,\,\,u_{0}=0\label{eq:parabolic eq intro}
\end{equation}
 The existence of a $C^{4}-$smooth solution $u_{t}$ to this PDE,
given $f,g\in C^{2,\alpha}(T^{n}),$ essentially follows from the
results in \cite{s-s,ki,k-s-w} (for completeness a proof is provided
in Appendix B). In order to formulate the convergence in the next
theorem we first observe that for $m\geq1$ the function
\begin{equation}
u_{m}^{(k)}(x_{i})=-k^{-1}\log\frac{a_{i}^{(k)}(m)}{p_{i}}\label{eq:change of variables u a}
\end{equation}
on the discrete torus $\Lambda_{k}$ admits a canonical extension,
defining a quasi-convex function on $X:$ 
\begin{equation}
u_{m}^{(k)}(x):=k^{-1}\log\sum_{i=1}^{N_{k}}e^{-kd(x,y_{i}^{(k)})^{2}/2}b_{i}^{(k)}(m-1)\label{eq:canonical ext intro}
\end{equation}
expressed in terms of a Fourier/Gauss type sum, with $k$ playing
the role of the band-width.
\begin{thm}
\label{thm:conv in dynamic torus setting intro}(Dynamic case). Assume
that $f$ and $g$ are in $C^{2,\alpha}(T^{n})$ for some $\alpha>0$
and that $N_{k}=k^{n},$ i.e. the edge-length of the grid on $T^{n}$
is equal to $k^{-1}.$ For any sequence of discrete times $m_{k}$
(iterations) such that $m_{k}/k\rightarrow t$ we have 
\[
\lim_{k\rightarrow\infty}u_{m_{k}}^{(k)}=u_{t}
\]
 uniformly on $T^{n},$ where $u_{t}$ is the smooth and strictly
quasi-convex solution of the parabolic PDE \ref{eq:parabolic eq intro}
with initial data $u_{0}=0.$ More precisely, for any positive integers
$k$ and $m$
\[
\sup_{T^{n}}\left|u_{m}^{(k)}-u_{m/k}\right|\leq C\frac{m}{k}k^{-1},
\]
for a constant $C$ independent of $t.$ More generally, if at the
initial discrete time $m=0,$ 
\[
u_{0|\Lambda_{k}}^{(k)}=u_{0|\Lambda_{k}},
\]
for a given strictly quasi-convex function $u_{0}$ in $C^{4,\alpha}(T^{n}),$
then the corresponding result still holds (with a constant $C$ depending
on $u_{0}).$
\end{thm}

An immediate consequence is that if the number $m_{k}$ of iterations
is too small, of the order $o(k),$ then $u_{m_{k}}^{(k)}\rightarrow0$
under the Sinkhorn iterations, i.e. ``nothing happens''. As discussed
in Section \ref{subsec:Comparison-with-previous sinkhorn rate} the
estimate in the previous theorem can be expected to be sharp under
the regularity assumption in the theorem. Moreover, the proof of the
theorem yields an essentially explicit control on the constant $C$
appearing in the error bound.

Using that $u_{t}$ converges exponentially to a potential $u$ for
the optimal transport problem, as $t\rightarrow\infty$ (which follows
from results in \cite{k-s-w})  we deduce the following
\begin{cor}
\label{cor:conv of explicit apprl intro} (Constructive approximation
of the potential $u$). Assume that $f$ and $g$ are in $C^{2,\alpha}(T^{n})$
for some $\alpha>0$ and that $N_{k}=k^{n},$ i.e. the ``edge length''
of the grid on $T^{n}$ is equal to $k^{-1}.$ There exists a positive
constant $A_{0}$ such that for any $A>A_{0}$ the following holds:
after $m_{k}=\left\lfloor Ak\log k\right\rfloor $ iterations the
corresponding quasi-convex functions $u_{k}(x):=u_{m_{k}}^{(k)}(x)$
satisfy the estimate
\begin{equation}
\sup_{T^{n}}\left|u_{k}-u\right|\leq Ck^{-1}\log k,\label{eq:estimate in cor torus intro}
\end{equation}
for some constant $C$ (depending on $A),$ where $u$ is a potential
for the corresponding optimal transport map. Moreover, the discrete
probability measures $\gamma_{k}$ on $T^{n}\times T^{n},$ determined
by the Sinkhorn algorithm, converge weakly towards the corresponding
optimal transport plan $(I\times(\nabla u+I))_{*}\mu,$ concentrating
exponentially on the graph $\Gamma$ of the transport map $\nabla u+I:$
\begin{equation}
\gamma_{k}\leq pk^{p}e^{-kd_{\Gamma_{u}}^{2}/p}\delta_{\Lambda_{k}},\label{eq:estimate for gamma k cor torus intro}
\end{equation}
for some positive constant $p,$ where $d_{\Gamma}$ denotes the vertical
distance to the graph $\Gamma$ in $T^{n}\times T^{n},$ i.e. $d_{\Gamma}(x,y):=d_{T^{n}}(y,\nabla u(x)+x)$
and $\delta_{\Lambda_{k}}$ denotes the discrete uniform probability
measure on the discrete torus $\Lambda_{k}.$ 
\end{cor}

More generally, we will show in Section \ref{sec:Generalizations-and-outlook},
that if $f$ and $g$ are in $C^{\infty}(T^{n}),$ then the previous
theorem and its corollary still hold as long as the number of discretization
points in the grid satisfies
\[
N_{k}\geq C_{\delta}k^{n/2(1+\delta)},\,\,\,C_{\delta}>0
\]
 for some $\delta\in]0,1/2].$ In particular, this means that one
can then take a grid with larger edge-length 
\[
h=O(1/k^{1/2+\delta})
\]
 without affecting the quality of the approximation $u_{k}$, that
is, without affecting the order $O(k^{-1}\log k$) of the corresponding
error terms. In other words, for smooth data the error terms are nearly
of order $O(h^{2})$ if the entropic regularization parameter is taken
to be close to the square of the edge-length $h$ of the grid.

Since each iteration in the Sinkhorn algorithm may be formulated in
terms of matrix-vector multiplication, which requires $O(N^{2})$
arithmetic operators, and since the direct construction of the function
$u_{k}$ uses, in general, $O\left(kN_{k}^{2}\log k\right)$ elementary
arithmetic operations. Moreover, in the present case of the torus
the matrix-vector operations in question are discrete convolutions
and can thus be performed using merely $O\left(N\log N\right)$ arithmetic
operations, by using the Fast Fourier Transform (or $O(N^{1+1/n})$
operations, using separability; see Section \ref{subsec:Optimal-transport-on linear}).
Thus the construction of $u_{k}$ in the previous corollary requires
merely $O\left(kN_{k}(\log N_{k})\log k\right)$ arithmetic operations
to obtain on error term of order $O(k^{-1}\log k).$ The same number
of arithmetic operations (modulo a negligible term $O(N_{k})$ needed
to for the summing) yields the following constructive approximation
of the squared Optimal Transport distance $d(\mu,\nu)$ (formula \ref{eq:OT distance intro})
with an additive error of the order $O(k^{-1}\log k):$ 
\begin{cor}
\label{cor:cons transport cost}(Constructive approximation of $d(\mu,\nu)^{2}$).
Assume that $f$ and $g$ are in $C^{2,\alpha}(T^{n})$ for some $\alpha>0.$
There exists a positive constant $A$ such that after $m_{k}=\left\lfloor Ak\log k\right\rfloor $
iterations 
\[
\frac{1}{2}d(\mu,\nu)^{2}=\frac{1}{k}\sum_{i=1}^{N_{k}}p_{i}^{(k)}\log a_{i}^{(k)}(m_{k})+\frac{1}{k}\sum_{i=1}^{N_{k}}q_{i}^{(k)}\log b_{i}^{(k)}(m_{k})+O(\frac{1}{k}\log k)
\]
and, as a consequence, such an equality also holds for the squared
discrete Optimal Transport distance $d(\mu^{(k)},\nu^{(k)})^{2}$
(defined by formula \ref{eq:def of C intro} with $C_{ij}=d(x_{i},x_{j})^{2}/2$). 
\end{cor}

Finally we point out that, by symmetry, Theorem \ref{thm:conv in dynamic torus setting intro}
also shows that, on the one hand, the functions 
\begin{equation}
v_{m_{k}}^{(k)}(x_{i}):=-k^{-1}\log\frac{b_{i}^{(k)}(m_{k})}{q_{i}}\label{eq:chang of variables v intro}
\end{equation}
converge, as $k\rightarrow\infty,$ towards the solution $v_{t}$
of the parabolic equation obtained by interchanging the roles of $\mu$
and $\nu.$ On the other hand, by Lemma \ref{lem:dens prop}, the
function $v_{m_{k}}^{(k)}$ is equal to the Legendre transform (in
the space variable) of $u_{m_{k}}^{(k)},$ up to a negligible $O\left(k^{-1}\log k\right)$
error term. Thus Theorem \ref{thm:conv in dynamic torus setting intro}
is consistent, as it must, with the fact that the Legendre transform
of the solution $u_{t}$ of equation \ref{eq:parabolic eq intro}
solves the parabolic equation obtained by interchanging the roles
of $\mu$ and $\nu$ (as can be checked by a direct calculation). 

\subsection{Generalizations }

\subsubsection{The static case}

The result in the static setting is shown to hold in a very general
setting of optimal transport between two probability measures $\mu$
and $\nu$ defined on compact topological spaces $X$ and $Y,$ respectively:
see Theorems \ref{Thm:weak conv of fixed points towards optimal transport plans},
\ref{thm:Static conv in Riem setting} and also Theorem \ref{thm:non-cpt static}
which, in particular, applies to the classical Euclidean setting where
$X$ and $Y$ are convex domains in $\R^{n}$ and $c(x,y):=-x\cdot y.$
In the case when $Y$ is convex the corresponding limit $\phi$ obtained
from the Sinkhorn iteration is the unique convex normalized solution
to the second boundary value problem for Monge-Ampère equation in
the interior of $X:$ 
\begin{equation}
e^{-g(\nabla\phi)}\det(\nabla^{2}\phi)dx=\mu,\,\,\,(\partial\phi)(X)\subset Y.\label{eq:sec bv pr intro}
\end{equation}
The result holds without any regularity assumptions on $\mu$ and
$g$ if Alexandrov's classical notion of a Monge-Ampère measure is
employed (see Section \ref{subsec:The-non-compact-setting}). Moreover,
the convergence holds even when the target set $Y$ is not convex,
but then $(\partial\phi)(X)$ is contained in the convex hull of $Y$
(see the discussion in Section \ref{subsec:The-non-compact-setting}).

In the general setting the roles of the positive vectors $p^{(k)}$
and $q^{(k)},$ discretizing $\mu$ and $\nu,$ are played by two
sequences $\mu^{(k)}$ and $\nu^{(k)},$ satisfying certain density
properties with respect to $\mu$ and $\nu$ (which are almost always
satisfied in practice). Moreover, the cost function $c$ is merely
assumed to be continuous and can even be replaced by any sequence
$c_{k}$ converging uniformly to $c,$ as the inverse $k$ of the
entropic regularization parameter tends to infinity (which applies,
in particular, to the convolutional Wasserstein distances introduced
in \cite{s-d---}, where the matrix in formula \ref{eq:def of A k ij intro}
is replaced by a heat kernel; see Section \ref{subsec:Application-to-convolutional}). 

\subsubsection{The dynamic case}

The corresponding result in the dynamic setting (Theorem \ref{thm:dynamic general})
requires that $X$ and $Y$ be compact Riemannian manifolds and further
regularity assumptions on $c,$ $\mu$ and $\nu$ (the case when $X$
and $Y$ have boundaries is left for the future). A ``local density
property'' on the approximations $\mu^{(k)}$ and $\nu^{(k)}$ is
required, roughly saying that the approximations of $\mu$ and $\nu$
hold up to length scales of the order $k^{-1/2}$, with an $O(1/k)-$error
term. Interestingly, the local density properties turn out to be satisfied
when the approximations $\mu^{(k)}$ and $\nu^{(k)}$ are defined
by (weighted) point clouds, generated using \emph{Quasi-Monte Carlo
}methods for numerical integration \cite{b-e-g,b-s-s-w,b-c-c-g-s-t}.
The general results are applied to the case of the round \emph{two-sphere}
endowed with the two different cost functions: (1) $d(x,y)^{2}$ and
(2) $-\log|x-y|.$ These two cases appear, for example, in applications
to (1) computer graphics (texture mapping), medical imaging \cite{d-t,z et al},
mesh adaptation for global whether and climate prediction \cite{w-b-b-c}
and (2) the reflector antenna problem in geometric optics \cite{waII,g-o}.
Nearly linear complexity of the corresponding Sinkhorn iteration is
achieved in both cases, using fast transforms and $O(N^{3/2})-$complexity
using separability.

\subsection{Relation to previous results}

To the best of the authors knowledge these are the first convergence
results concerning the Sinkhorn algorithm (and its fixed points) in
the limit when the number $N$ of points and the inverse of the regularization
parameter $\epsilon$ \emph{jointly} tend to infinity (see \cite{c-d-p-s}
and references therein for the static case when only $\epsilon^{-1}(=k)$
tends to infinity in the Euclidean $\R^{n}-$ setting and \cite{le1,le}
for a very general setting). This kind of joint limit is, in practice,
what is studied in numerical simulations in the context of geometric
optimal transport (see for example \cite{s-d---} and the in-depth
study in \cite{sc,fe} on CPU and GPU hardware, respectively. Thus
the convergence analysis in the present paper provides a theoretical
bases for these simulations and yields concrete rates, under appropriate
regularity assumptions (see Section \ref{subsec:Comparison-with-previous sinkhorn rate}
for a comparison with previous rates for the Sinkhorn algorithm).
In particular, the present results rigorously establish and quantify
the experimental observations in \cite[Fig 2]{sc}, that the Sinkhorn
algorithm converges after essentially $O(\epsilon^{-1})$ iterations.
Moreover, the experimental findings in \cite[Fig 2]{sc} that $\epsilon$
can be taken to be close to the order $O(h^{2})$ on a grid with ``edge
length'' $h$ are confirmed, if the data is assumed to be $C^{\infty}-$smooth
(note that the data in \cite[Fig 2]{sc} is even real-analytic).

It should also be pointed out that the ``change of variables'' in
formula \ref{eq:change of variables u a} which plays an important
role in the present paper, is also crucial for numerical simulations
as it ensures numerical stability, as emphasized in \cite{m-p,sc,fe}.
As stressed in \cite{fe} the corresponding log-sum-exp KeOps routines
\cite{c-f-g}, used in \cite{fe} to implement the iteration $u_{m}\rightarrow u_{m+1}$
on GPU hardware, are just as efficient as matrix-vector products with
the kernel matrix $K_{ij}.$ Moreover, the present results provide
a theoretical justification for the kernel truncations employed in
\cite{sc,fe}. To briefly explain this we recall that the starting
point of the stabilization scheme advocated in \cite{sc} (see also
\cite[Remark 4.22]{m-p}) is to write the iteration in terms of the
variables $(u_{m+1}-u_{m},v_{m+1}-v_{m}).$ In the setting of a general
cost function $c(x,y)$ this corresponds to replacing the matrix kernel
$e^{-kc(x,y)}$ with the ``stabilized'' kernel
\[
e^{-kc(x,y)}e^{-ku_{m-1}(x)}e^{-kv_{m-1}(y)}
\]
By Theorems \ref{thm:conv in dynamic torus setting intro}, \ref{thm:dynamic general}
(and the argument in the proof \ref{eq:estimate for gamma k cor torus intro})
the latter kernel is, for $k$ large, exponentially concentrated on
the graph of $\Gamma_{t}$ in $X\times Y$ of the diffeomorphism $F_{u_{t}}$
corresponding to the parabolic solution $u_{t}$ for $t=(m-1)/k.$
This means that the corresponding $N_{k}\times N_{k}$ matrices $\left(e^{-kc(x_{i},y_{j})}e^{-ku_{m-1}(x_{i})}e^{-kv_{m-1}(y_{j})}\right)$
are effectively sparse. Building on the results in the present paper
this is exploited in \cite{b-m} to modify the Sinkhorn algorithm
in order to obtain a numerically stable algorithm on the torus, which
is shown to have nearly $O(N)-$complexity at each iteration.

\subsubsection{Comparison with other numerical schemes for Optimal Transport}

The quantitative convergence in Corollary \ref{cor:conv of explicit apprl intro}
should be compared with previous results in the rapidly growing literature
on numerical approximations schemes for solutions to Optimal Transport
problems and the corresponding Monge-Ampère type equations. However,
the author is not aware of any previous results providing both complexity
bounds (in terms of $N)$ \emph{and} a quantified rate of convergence
of the error of the approximate solution, as $N\rightarrow\infty.$
Recall that a time-honored approach is to approximate the optimal
transport potential $u$ by solving the linear program which is dual
to the discretized optimal transport problem. This can be done using
combinatorial algorithms. However, they do not scale well for large
$N$ (see, for example, the exposition in \cite{Br-F-H-L...}, where
applications to cosmology are given in the periodic setting). Moreover,
it is not clear how to establish quantitative convergence rates for
the convergence towards $u.$ Another influential approach is the
Benamou-Brenier Augmented Lagrangian approach, using computational
fluid mechanics, introduced in \cite{b-b} in the periodic setting
(numerical experiments suggest that it has $O(N^{3})$ time-complexity,
as pointed out in \cite{b-d}). There is also a rapidly expanding
literature concerning other discretization schemes in the field of
numerical analysis of PDEs, mainly concerning the case of domains
in $\R^{n}$ (as in Section \ref{subsec:The-non-compact-setting})
and the periodic case of the torus (as in Section \ref{subsec:Main-results-in-the torus}).
Here we will focus on the schemes where convergence results - analogous
to Theorems \ref{thm:non-cpt static}, \ref{thm:conv in static torus setting intr}
-  have been established. The mostly studied approaches are based
on either \emph{finite differences \cite{b-f-o,b-d} }or \emph{semi-discrete
Optimal Transport \cite{k-m-t} }(which goes back to the classical
work of Alexandrov and Pogorelov). Then\emph{ }the fully non-linear
Monge-Ampère equation for the potential $u$ (and the corresponding
boundary conditions \ref{eq:sec bv pr intro} or periodicity conditions)
is replaced with a finite dimensional non-linear algebraic equation
for a ``discrete'' function $u_{h},$ where $h$ denotes the corresponding
spatial resolution. In our torus setting we thus have $h=1/k$ (defined
as the entropic regularization paremeter) when the data is $C^{2,\alpha}$
smooth and for $C^{\infty}-$data $h$ can be taken arbitrarily close
to $1/k^{1/2}.$ The equation for $u_{h}$ may be expressed as the
fixed point condition for a non-linear map $S_{h}:$
\[
S_{h}(u_{h})=u_{h},
\]
 (whose role in our setting is played by the scaled logarithm of the
Sinkhorn operator defined by formula \ref{eq:scaled log Sinkhor op}).
In practice, the map $S_{h}$ is often taken to be a Newton type iteration.
In experiments it has been computed in almost linear time-complexity
$O(N_{h})$\emph{ \cite{b-f-o,b-d}.} Moreover, merely a few dozen
iterations $S_{h}^{(m_{h})}(u_{0})$ appear to be needed in order
to obtain a good approximation of $u_{h}.$ In the semi-discrete approach
the convergence of $u_{h}$ towards $u,$ as $h\rightarrow0,$ follows
directly from basic stability results for optimal transport plans.
Moreover, in the case of finite difference approach the convergence
is established in \cite{b-d} in the setting of domains. Another discretization
of the Optimal Transport problem in domains, called the \emph{logarithmic
discrete Monge\textendash Ampère optimization problem,} is introduced
in \cite{l-r} and the corresponding solution $u_{h}$ is shown to
converge towards $u.$ Very recently, a general convergence framework
is introduced in \cite{f h}, showing how to slightly modify a range
of existing numerical schemes (including \cite{b-f-o}) to establish
the convergence of the corresponding discrete solutions $u_{h}$ towards
$u$ in the setting of domains.

However, the problem of establishing convergence \emph{rates} as $h\rightarrow0$
is still open in these schemes (see the discussion in the survey \cite{f-g-n}).
From this point of view one of the main points of the present paper
is to rigorously quantify how many iterations $m_{h}$ are needed
in the setting of the Sinkhorn iteration in order to approximate the
solution $u$ to nearly order $O(h),$ for $C^{2,\alpha}-$data (and
nearly order $O(h^{2})$ for $C^{\infty}-$data). The answer is that
nearly $O(h^{-1})$ iterations are needed, according to Corollary
\ref{cor:conv of explicit apprl intro} (see the discussion in Section
\ref{subsec:Comparison-with-previous sinkhorn rate}). This is in
line with the experimental findings in \cite{sc,fe} (see, in particular,
\cite[Figure 3.19]{fe} where $\epsilon:=k^{-1}=10^{-4}$ and \cite[Fig 2]{sc}). 
\begin{rem}
Interestingly, as discussed in detail in \cite{sc,fe}, heuristics
inspired by simulated annealing in numerics (aka $\epsilon-$scaling)
and multi-scale techniques can be used to effectively reduce the of
number of Sinkhorn iterations needed to approximate the optimal transport
potential $u$ to a few dozen. In a nutshell, the idea is to fine-tune
and gradually decrease both the parameter $\epsilon$ and the spatial
resolution $h$ during the Sinkhorn iterations (see \cite[Figure 3.26]{fe}).
One can anticipate that variants of Theorems \ref{thm:conv in dynamic torus setting intro},
\ref{thm:dynamic general} will play a role in making these heuristics
and experimental findings mathematically rigorous with quantitative
error estimates. The local density property in Definition \ref{def:local dens}
should be useful in this regard (note that the length scale $k^{-1/2}(=\epsilon^{1/2})$
appearing in Definition \ref{def:local dens} corresponds to the notion
of blurring scale in \cite{fe}). On the other hand, as discussed
in Section \ref{sec:Outlook}, it is of independent interest to be
able to approximate the solution $u_{t}$ of the parabolic optimal
transport equations at any \emph{finite} time $t$ and then $\epsilon$
must be kept fixed during the iterations (so that an $O(\epsilon)-$approximation
of $u_{t}$ is obtained after $O(\epsilon^{-1}t)$ iterations, if
logarithmic factors are ignored).
\end{rem}

In the setting of semi-discrete optimal transport the convergence
of a damped Newton iteration towards $u_{h}$ is established in \cite{k-m-t}
(under similar assumptions as in Theorem \ref{cor:constr of approx in quite general setting })
at a linear rate (in the exponential sense). Furthermore, rates of
convergence of $u_{h}$ towards $u$ are established in \cite{ber2}.
However, the damping parameter and the rate established in \cite{k-m-t}
depends on the discrete solution $u_{h}$ (and hence on $h)$ in a
rather complicated way and degenerates as $N_{h}$ is increased, i.e.
when $h$ is decreased; see \cite[Remark 1.3]{k-m-t} and compare
also with the discussion in Section \ref{subsec:Comparison-with-previous sinkhorn rate}.
This means that, for the moment, the analog of Corollary \ref{cor:conv of explicit apprl intro}
in the semi-discrete setting seems ot be out of reach.

A damped Newton approach has also been applied directly to the Monge-Ampère
equation and shown to converge to $u$ at a linear rate in the periodic
setting in \cite{lo-r} when the target density is constant and then
in \cite{s-a-k} in the general periodic setting. In these approaches
the iteration $u^{(m+1)}$ is obtained by inverting a second order
linear elliptic operator (in non-divergence form) depending on $u^{(m)}.$
Various discretizations of these schemes are studied experimentally
in \cite{lo-r,s-a-k,k-l-p,w-b-b-c}.

One advantage of the Sinkhorn framework over many other approaches,
when applied to general manifolds, is that it is meshfree. In other
words, it does not require generating a grid or a polyhedral tessellation
of the manifolds, but only a suitable point cloud, which can be efficiently
generated using Quasi-Monte Carlo methods. In the case of the round
sphere various different numerical algorithms have previously been
explored in the literature: see \cite{d-t,z et al,w-b-b-c} for experimental
work on the case of the cost function $d(x,y)^{2}$ and \cite{c-k-o,de-m-t}
for the case of the cost function $-\log|x-y|$, as applied to the
reflector antenna problem in geometric optics. 

\subsubsection{Kähler geometry}

The present results are very much inspired by an analogous setup which
appears in complex (Kähler) geometry. Briefly, the role of the Sinkhorn
algorithm is then played by Donaldson's iteration, introduced in \cite{do},
whose fixed points are called balanced metrics and $k$ appears as
the power of a given ample line bundle $L$ over $X$ with $e^{-ku}$
playing the role of a Hermitian metric on $L.$ Moreover, the role
of the function $\rho_{ku}$ (formula \ref{eq:explicit expres for pho})
is played by the (normalized) point-wise norm of the Bergman kernel
on the diagonal, induced by the pair $(u,\mu).$ From this point of
view the ``static case'' of Theorem \ref{thm:conv in static torus setting intr}
(and its generalization Theorem \ref{Thm:weak conv of fixed points towards optimal transport plans})
is the analog of \cite[Thm B]{bbgz} and the ``dynamic case'' of
Theorem \ref{thm:conv in dynamic torus setting intro} is the analog
of the result in \cite{Ber} showing that Donaldson's iteration converges
to the Kähler-Ricci flow \cite{ca}, as conjectured in \cite{do}.
In fact, identifying the real torus $T^{n}$ with a reduction of the
complex torus $X:=\C^{n}/(\Z^{n}+i\Z^{n})$ the parabolic flow \ref{eq:parabolic eq intro}
can, in the case when $g$ is constant be identified with a twisted
Kähler-Ricci flow \cite{ca} whose stationary solutions are Kähler
potentials solving the corresponding complex Monge-Ampère equation
(known as the Calabi-Yau equation in this context).\footnote{When $f$ and $g$ are constant the corresponding twisted Kähler-Ricci
flow coincides with Hamilton's Ricci flow restricted to the space
of Kähler metrics. } But is should be stressed that a new feature of the analysis in the
present paper, compared to the usual situation in Kähler geometry
(apart from allowing a non-uniform target measure $\nu,$ i.e. a non-constant
$g)$ is that the source measure $\mu$ is taken to be\emph{ discrete}
and depend on $k,$ i.e. it is given by a discrete sequence $\mu^{(k)}.$
In practice, such discretizations are used in implementations of Donaldson's
iteration, such as the experimental work \cite{d-k-l-r}, motivated
by String Theory. The discreteness of $\mu^{(k)}$ leads to various
technical complications, that do not seem to have been studied rigorously
in the Kähler geometry setting. Interestingly, the density condition
on the sequence $\mu^{(k)}$ appearing in Lemma \ref{lem:dens prop}
can be viewed as a real analog of the Bernstein-Markov property for
a sequence $\mu^{(k)},$ as studied in the complex geometric and pluripotential
theoretic setting (see the discussion on page 8 in \cite{b-b-w}).
The relations between the real and complex settings will be expanded
on elsewhere.

\subsection{Acknowledgments}

I am grateful to Klas Modin for many discussions and, in particular,
for drawing my attention to the recent numerical works on the applications
of the Sinkhorn algorithm to Optimal Transport, which was the starting
point of the present paper. Also thanks to Gabriel Peyré and Jean-David
Benamou for helpful comments and discussions, Jean Feydy for sharing
his thesis draft and the two referees for their comments that greatly
improved the exposition. Lacking background in Numerics and Optimal
Transport I apologize for any omission in accrediting results properly.
This work was supported by grants from the ERC, the KAW foundation
and the Göran Gustafsson foundation. 

\subsection{Organization}

In section \ref{sec:General-setup-and} a general setting for iterations
on $C(X),$ generalizing the Sinkhorn algorithm, is introduced. The
iteration in question, which is determined by a triple $(\mu,\nu,c),$
is essentially equivalent to the Iterative Proportional Fitting Procedure
and the results in Section \ref{sec:General-setup-and} are probably
more or less well-known (except perhaps Theorem \ref{thm:conv of u m in general setting}).
But one point of the presentation is to exploit the variational structure.
It can be viewed as a real analogue of the formalism introduced in
\cite{Ber}, in the setting of Donaldson's iteration \cite{do} and
it lends itself to various generalizations of the optimal transport
problem (such as Monge-Ampère equations with exponential non-linearities).
In section \ref{sec:Proofs-of-the} the variational structure is used
to give a general convergence result for the Sinkhorn fixed points,
which when specialized to the torus setting yields Theorem \ref{thm:conv in static torus setting intr}.
Applications to the second boundary value problem for the Monge-Ampère
operator in $\R^{n}$ and to convolutional Wasserstein distances are
also given. Then in Section \ref{sec:Convergence-of-the iter torus}
the convergence of the Sinkhorn iteration towards the parabolic Optimal
Transport equations on the torus is shown (Theorem \ref{thm:conv in dynamic torus setting intro}).
The proof leverages the regularity theory and a priori estimates of
the corresponding parabolic PDE on the torus (shown in Appendix B).
In the following Section \ref{sec:Generalizations-and-outlook} the
result is generalized to a rather general setting of optimal transport
on compact manifolds. In Section \ref{sec:Nearly-linear-complexity}
it is shown that nearly linear complexity can be achieved in the case
of optimal transport on the torus and the sphere (which applies, in
particular, to the reflector antenna problem). Section \ref{sec:Outlook}
gives an outlook on relations to singularity formation in the parabolic
optimal transport equations. In particular, the variational approach
to the Sinkhorn iteration introduced in Section \ref{sec:General-setup-and}
is exploited in order to propose a generalized notion of solution
to the corresponding parabolic PDE. In Appendix A a proof of a discrete
version of the classical stationary phase approximation is provided.

\section{\label{sec:General-setup-and}General setup and preliminaries}

If $Z$ is a compact topological space then we will denote by $C(Z)$
the space of continuous functions on $Z$ endowed with the sup-norm
and by $\mathcal{P}(Z)$ the space of all (Borel) probability measures
on $Z,$ endowed with the weak topology. Given a subset $S$ of $Z$
we will denote by $\chi{}_{S}$ the function which is equal to $0$
on $S$ and infinity otherwise.

Throughout the paper we will assume given a triple $(\mu,\nu,c)$
where $\mu$ and $\nu$ are probability measures on compact topological
spaces $X$ and $Y,$ respectively and a function $c$ on $X\times Y.$
The function $c$ will be assumed to be continuous in all sections
except in Section \ref{sec:Generalizations-and-outlook} (where we
assume that $c$ is lower semi-continuous). The supports of $\mu$
and $\nu$ will be denote by $X_{\mu}$ and $Y_{\nu},$ respectively.
Given $u\in C(X)$ and $v\in C(Y)$ we will, abusing notation slightly,
identify $u$ and $v$ with their pull-backs to $X\times Y.$ 

\subsection{\label{subsec:Recap-of-Optimal}Recap of Optimal Transport and the
$c-$Legendre transform }

Let us start by recalling the standard setup for optimal transport
(see the book \cite{v1} for further background). A probability measure
$\gamma\in\mathcal{P}(X\times Y)$ is said to be a \emph{transport
plan (or coupling) between $\mu$ and $\nu$ }if the the push forwards
of $\gamma$ to $X$ and $Y$ are equal to $\mu$ and $\nu,$ respectively.
The subspace of all such probability measures in $\mathcal{P}(X\times Y)$
will be denote by $\Pi(\mu,\nu).$ A transport plan in $\Pi(\mu,\nu)$
is said to be\emph{ optimal wrt the cost function $c,$ }if it realizes
the following infimum: 
\[
\inf_{\gamma\in\Pi(\mu,\nu)}\int_{X\times Y}c\gamma
\]
 By weak compactness such an optimal transport plan always exists.
The \emph{$c-$Legendre transform $u^{c}$ }of a function $u\in C(X)$
is defined as the following function in $C(Y)$ 
\[
u^{c}(y):=\sup_{x\in X}\left(-c(x,y)-u(x)\right).
\]
Similarly, if $v\in C(Y)$ then $v^{c}$ is the function on $C(X)$
defined by replacing $u$ in the previous formula with $v$ and taking
the sup over $Y.$ A function $u\in C(X)$ is said to be \emph{$c-$convex}
if 
\[
(u^{c})^{c}=u
\]
Equivalently, $u$ is $c-$convex iff there exists some $v\in C(Y)$
such that $u=v^{c}.$ Indeed, this follows from the observation that
$u^{ccc}=u^{c}$ for any $u\in C(X),$ which in turn follows from
$u^{cc}\leq u.$ The following functional on $C(X)$ will be called
the \emph{Kantorovich functional}: 
\begin{equation}
J(u):=\int u\mu+\int u^{c}\nu.\label{eq:Kant functional}
\end{equation}

\begin{prop}
\label{prop:optim crit}(``Optimality criterion'') A transport plan
$\gamma\in\Pi(\mu,\nu)$ is optimal iff there exists $u\in C(X)$
which is $c-$convex and such that $\gamma$ is supported in 
\begin{equation}
\Gamma_{u}:=\left\{ (x,y)\in X\times Y:\,u(x)+u^{c}(y)+c(x,y)=0\right\} \label{eq:def of Gamma u}
\end{equation}
Moreover, if this is the case then 
\[
\int_{X\times Y}c\gamma=-J(u)
\]
\end{prop}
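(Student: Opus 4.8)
The plan is to prove both implications of the optimality criterion, with the "if" direction being the easy one and the "only if" direction requiring Kantorovich duality.

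\medskip

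\noindent\textbf{The ``if'' direction.} Suppose $u\in C(X)$ is $c$-convex and $\gamma\in\Pi(\mu,\nu)$ is supported in $\Gamma_u$. First I would record the trivial pointwise inequality $u(x)+u^c(y)+c(x,y)\geq 0$ for \emph{all} $(x,y)\in X\times Y$, which is immediate from the definition of $u^c$ as a supremum. Integrating this against any competitor $\gamma'\in\Pi(\mu,\nu)$ and using that the marginals are $\mu$ and $\nu$ gives
\[
\int_{X\times Y} c\,\gamma' \;\geq\; -\int_X u\,\mu \;-\;\int_Y u^c\,\nu.
\]
On the other hand, since $\gamma$ is supported where equality $u(x)+u^c(y)+c(x,y)=0$ holds, integrating against $\gamma$ itself turns the inequality into an equality, so $\int c\,\gamma = -\int u\,\mu - \int u^c\,\nu$. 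Hence $\int c\,\gamma \leq \int c\,\gamma'$ for every $\gamma'\in\Pi(\mu,\nu)$, i.e. $\gamma$ is optimal. Note this direction does not even use $c$-convexity of $u$, only that $u$ is real-valued and continuous; $c$-convexity is needed for the converse.

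\medskip

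\noindent\textbf{The ``only if'' direction.} Here I would invoke Kantorovich duality: for continuous $c$ on a product of compact spaces,
\[
\inf_{\gamma\in\Pi(\mu,\nu)}\int c\,\gamma \;=\; \sup\Bigl\{\,-\!\int_X u\,\mu - \int_Y v\,\nu \;:\; u\in C(X),\ v\in C(Y),\ u(x)+v(y)+c(x,y)\geq 0\Bigr\},
\]
and the supremum on the right is attained, with a maximizing pair that may be taken of the form $(u, u^c)$ with $u$ a $c$-convex potential (replace an arbitrary admissible $v$ by $v^c=:\,$some $u'$, then by $u'^{\,c}$; since each step only decreases the integrals' negatives' obstruction and $u'^{cc}\le u'$, one lands on a $c$-convex $u$ with $(u,u^c)$ still admissible and at least as good). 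Given such an optimal $\gamma$ and such an optimal pair $(u,u^c)$, the duality says
\[
\int c\,\gamma \;=\; -\int_X u\,\mu - \int_Y u^c\,\nu \;=\; -\int_{X\times Y}\bigl(u(x)+u^c(y)\bigr)\,\gamma,
\]
so $\int_{X\times Y}\bigl(u(x)+u^c(y)+c(x,y)\bigr)\,\gamma = 0$. Since the integrand is everywhere $\geq 0$ and $\gamma$ is a probability measure, the integrand vanishes $\gamma$-a.e., and by continuity it vanishes on $\operatorname{supp}\gamma$; that is, $\gamma$ is supported in $\Gamma_u$.

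\medskip

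\noindent\textbf{Main obstacle.} The crux is the Kantorovich duality theorem together with the existence of a $c$-convex optimal potential (the fact that the dual supremum is attained at a pair $(u,u^c)$). I would cite the standard reference already in the excerpt (the book \cite{v1}) for this, rather than reproving it; everything else is the short convex-duality bookkeeping above. If one wished to keep the argument self-contained, the delicate point would be attainment of the dual supremum in $C(X)\times C(Y)$ — this uses compactness of $X,Y$, continuity of $c$, and an Arzel\`a--Ascoli / equicontinuity argument on $c$-convex functions (which inherit a modulus of continuity from $c$). The reduction to a pair of the form $(u,u^c)$ is then the elementary observation, already noted in the excerpt, that $u^{ccc}=u$ and $u^{cc}\le u$.
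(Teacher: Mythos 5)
Your proposal is correct, and its ``if'' direction is essentially identical to the paper's argument: the pointwise inequality $u(x)+u^{c}(y)+c(x,y)\geq0$ gives the lower bound $\int c\,\gamma'\geq-\int u\,\mu-\int u^{c}\,\nu$ for every competitor, and a plan supported in $\Gamma_{u}$ saturates it. The difference lies in the converse. The paper deliberately proves only the direction it later needs and cites \cite{v1} (Knott--Smith) for the full equivalence; you instead prove the ``only if'' direction by invoking Kantorovich duality together with attainment of the dual supremum at a pair of the form $(u,u^{c})$ with $u$ $c$-convex (double $c$-convexification plus an equicontinuity/Arzel\`a--Ascoli argument, delegated to \cite{v1}). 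That is a legitimate and standard route, and since the converse is never used elsewhere in the paper there is no circularity --- but note that the paper is arranged so that Kantorovich duality is an \emph{output} rather than an input: the remark following Proposition \ref{prop:optim crit} derives the duality \ref{eq:Kant duality} as a byproduct of Theorem \ref{Thm:weak conv of fixed points towards optimal transport plans}, whereas your argument imports duality and dual attainment as a black box. So your version buys a self-contained two-sided statement at the cost of a heavier external ingredient, while the paper's version keeps the logical dependencies minimal. One small point: your parenthetical describing the convexification step (``replace $v$ by $v^{c}=:u'$, then by $u'^{c}$\dots'') is garbled as written; the clean statement is that an admissible pair $(u,v)$ may be replaced first by $(u,u^{c})$ (since $v\geq u^{c}$ and $(u,u^{c})$ is admissible) and then by $(u^{cc},u^{c})$ (since $u^{cc}\leq u$ and $u^{ccc}=u^{c}$), which only improves the dual value and lands on a $c$-convex potential; the genuinely nontrivial point, as you say, is attainment of the supremum, which your citation covers.
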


\begin{proof}
This is standard and known as the Knott-Smith optimality criterion
(in the Euclidean setting) \cite{v1}. For completeness we provide
the simple proof of the direction that we shall use later on. Since
$u+u^{c}+c\geq0$ on $X\times Y$ the following lower bound holds
for any given $\gamma\in\Pi(\mu,\nu)$
\begin{equation}
\int_{X\times Y}c\gamma\geq-\inf_{u\in C(X)}J(u)\label{eq:lower bd in pf prop opt crit}
\end{equation}
 Now, if $\gamma$ is supported in $\Gamma_{u}$ it follows directly
that $\int_{X\times Y}c\gamma=-J(u)$ and hence $\gamma$ attains
the lower bound above, i.e. $\gamma$ is optimal. 
\end{proof}
\begin{rem}
A byproduct of Theorem \ref{Thm:weak conv of fixed points towards optimal transport plans}
below (applied to the case when $\mu_{k}=\mu$ and $\nu_{k}=\nu$
for all $k)$ is a proof that there always exists a transport plan
$\gamma_{*}$ with support in $\Gamma_{u}$ for some $c-$convex function.
Since $\gamma_{*}$ saturates the lower bound \ref{eq:lower bd in pf prop opt crit}
it follows that taking the infimum over all $\gamma$ in $\Pi(\mu,\nu)$
yields equality in \ref{eq:lower bd in pf prop opt crit} . As a consequence,
\begin{equation}
\inf_{\gamma\in\Pi(\mu,\nu)}\int_{X\times Y}c\gamma=\sup_{(u,v)\in\Phi_{c}}\int u\mu+\int v\nu,\,\,\,\Phi_{c}:=\left\{ (u,v)\in C(X)\times C(Y):\,u+v\leq c\right\} \label{eq:Kant duality}
\end{equation}
This is the content of Kantorovich duality, which is usually shown
using Rockafeller-Fenchel duality in topological vector spaces \cite{v1}.
\end{rem}

\subsubsection{\label{subsec:The-torus-setting}The torus setting}

In section we consider the case when $X=Y=T^{n}:=\R^{n}/\Z^{n}$ and
the cost function $c:=d_{T^{n}}^{2}/2$ is half the squared standard
distance function on $T^{n}.$ We will identify a function $u$ on
$T^{n}$ with a $\Z^{n}-$periodic function on $\R^{n}$ in the usual
way. Similarly, we identify the cost function $c$ on $T^{n}$ with
a function on $\R^{n}\times\R^{n},$ which is $\Z^{n}-$periodic in
each argument
\begin{equation}
c(x,y):=\frac{1}{2}d_{T^{n}}(x,y)^{2}:=\frac{1}{2}\inf_{m\in\Z^{n}}|x+m-y|^{2}\label{eq:cost function on the torus}
\end{equation}
Note that $c(x,\cdot)$ is Lipschitz with Lipschitz constant $\sqrt{n}$
on $T^{n}$(endowed with its standard metric). As a consequence any
$c-$convex function $u$ on $T^{n}$ is also Lipschitz with Lipschitz
constant $\sqrt{n}.$ In this particular case a $c-$convex function
will be called \emph{quasi-convex} and we will say that $u\in C^{2}(T^{n})$
is \emph{strictly} \emph{quasi-convex} if $\nabla^{2}u+I>0.$ 
\begin{lem}
\label{lem:quasi conv}Let $u\in C^{2}(T^{n})$ be strictly quasi-convex.
Then
\begin{itemize}
\item the map
\begin{equation}
x\mapsto y_{x}:=x+(\nabla u)(x)\label{eq:c gradient map torus}
\end{equation}
 defines a $C^{1}-$diffeomorphism of $T^{n}.$
\item $u^{c}$ is also a strictly quasi-convex $C^{2}-$function on $T^{n}$
and the corresponding map 
\begin{equation}
y\mapsto x_{y}:=y+(\nabla u^{c})(y)\label{eq:def of x y}
\end{equation}
is the inverse of the map \ref{eq:c gradient map torus} and the following
matrix relation holds
\begin{equation}
(\nabla^{2}u+I)(x_{y})^{-1}=(\nabla^{2}u^{c}+I)(y)\label{eq:matrix relation for u}
\end{equation}
\end{itemize}
Conversely, if $u\in C^{2}(T^{n})$ is quasi-convex and the map \ref{eq:c gradient map torus}
is a $C^{1}-$diffeomorphism of $T^{n},$ then $u$ is strictly quasi-convex.
Moreover, if $u\in C^{k}(T^{n})$ is strictly quasi-convex, for $k$
a positive integer and $k\geq2,$ then $u^{c}\in C^{k}(T^{n}).$ 
\end{lem}

\begin{proof}
Given a function $\phi$ on $\R^{n}$ we denote by $\phi^{*}$ its
classical Legendre transform: 
\begin{equation}
\phi^{*}(y):=\sup_{x\in\R^{n}}x\cdot y-\phi(x)\label{eq:def of classical Leg tr}
\end{equation}
(in other words, this is the $c-$Legendre transform wrt $c(x,y):=-x\cdot y).$
Given a $\Z^{n}-$invariant quasi-convex function $u$ on $\R^{n}$
we set $\phi(x):=u(x)+|x|^{2}/2.$ Then it follows directly from the
definitions that $\phi$ is convex and
\begin{equation}
\phi^{*}(y)=u^{c}(y)+|y|^{2}/2,\,\,\,x+(\nabla u)(x)=\nabla\phi(x),\,\,\nabla^{2}u+I=\nabla^{2}\phi\label{eq:rel between leg trans and c leg}
\end{equation}
Next note that since $u^{c}$ is continuous and periodic it is bounded
and hence $\phi^{*}$ is finite on all of $\R^{n}$ with quadratic
growth. As a consequence, given $y\in\R^{n},$ the function $x\mapsto x\cdot y-\phi(x)$
attains it sup at some point $x_{y}\in\R^{n}$ 
\begin{equation}
\phi^{*}(y)=x_{y}\cdot y-\phi(x_{y})\label{eq:leg transform of phi in terms of x y}
\end{equation}
 and since the point is a local maximum we have $y=\nabla\phi(x_{y}).$
This shows that $\nabla\phi$ maps $\R^{n}$ surjectively onto $\R^{n}.$
Moreover, since $\nabla^{2}\phi>0$ the $\phi$ function is strictly
convex and $x_{y}$ is uniquely determined. Thus the $C^{1}-$map
$\nabla\phi$ is a bijection and moreover its inverse $y\mapsto x_{y}$
is also a $C^{1}-$map (by the inverse function theorem), which proves
the first claim in the lemma. Moreover, since $x_{y}$ is the unique
minimizer of the sup defining $\phi^{*}$ the function $\phi^{*}$
is differentiable with gradient $x_{y}$ at $y.$ Hence, the $C^{1}-$inverse
of $\nabla\phi$ is given by $\nabla\phi^{*},$ showing that $\phi^{*}$
is in $C^{2}(\R^{n}).$ Differentiating the identity $\nabla\phi^{*}\circ\nabla\phi=I$
finally proves \ref{eq:matrix relation for u} and the last statement
follows from the implicit function theorem: $\nabla\phi$ is $C^{k-1}$
implies (since $\nabla^{2}\phi>0)$ that its inverse $\nabla\phi^{*}$
is also $C^{k-1}.$ Finally, if $u\in C^{2}(T^{n})$ is quasi-convex
and the map \ref{eq:c gradient map torus} is a diffeomorphism of
$T^{n},$ then differentiating $(\nabla\phi)^{-1}\circ\nabla\phi=I$
reveals that the non-negative matrix $\nabla^{2}\phi$ is non-degenerate,
hence strictly positive, i.e $u$ is strictly quasi-convex. 
\end{proof}
We will also have use for the following
\begin{lem}
\label{lem:smooth}Assume that $u$ is $C^{1}-$smooth and strictly
quasi-convex. Then, for any fixed $y\in T^{n},$ the unique infimum
of the function $x\mapsto c(x,y)+u(x)$ on $T^{n}$ is attained at
$x=x_{y}$ (defined by formula \ref{eq:def of x y}). Moreover, the
function $x\mapsto c(x,y)$ is smooth on some neighborhood of $x_{y}$
in $T^{n}$ and its Hessian is equal to the identity there.
\end{lem}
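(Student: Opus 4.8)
The plan is to reduce everything to the well-understood structure of the Euclidean Legendre transform, exactly as in the proof of Lemma \ref{lem:quasi conv}. First I would fix $y\in T^{n}$ and unwind the definition of $d(x,y)^{2}$: locally near the relevant point, $d(x,y)^{2}=|x-y+m|^{2}$ for the (generically unique) lattice vector $m\in\Z^{n}$ minimizing $|x+m-y|$, so that on a neighborhood the function $x\mapsto d(x,y)^{2}/2$ is a genuine smooth quadratic with Hessian equal to $I$. The subtle point is that this is only valid away from the cut locus of $y$, i.e. where two or more lattice translates compete; so the first step is to show that the minimizer $x_{y}$ of $x\mapsto d(x,y)^{2}/2+u(x)$ does not lie on this cut locus.

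The key observation is the identity from Lemma \ref{lem:quasi conv}: setting $\phi(x):=u(x)+|x|^{2}/2$ on $\R^{n}$, the $c$-Legendre transform $u^{c}$ corresponds to the ordinary Legendre transform via $\phi^{*}(y)=u^{c}(y)+|y|^{2}/2$, and strict quasi-convexity of $u$ is equivalent to $\phi$ being a smooth strictly convex function with $\nabla\phi$ a diffeomorphism of $\R^{n}$ (descending appropriately to the torus map \ref{eq:c gradient map torus}). Now I would write, for a fixed lift $\tilde y\in\R^{n}$ of $y$,
\[
\frac{d(x,y)^{2}}{2}+u(x)=\inf_{m\in\Z^{n}}\left(\frac{|x+m-\tilde y|^{2}}{2}+u(x)\right)=\inf_{m\in\Z^{n}}\left(\phi(x+m)-(x+m)\cdot\tilde y\right)+\frac{|\tilde y|^{2}}{2},
\]
using $\Z^{n}$-periodicity of $u$ to replace $u(x)$ by $u(x+m)$. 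Minimizing over both $x$ and $m$ — equivalently over all of $\R^{n}$ — the infimum of $z\mapsto\phi(z)-z\cdot\tilde y$ is, by strict convexity of $\phi$, attained at the unique point $z$ with $\nabla\phi(z)=\tilde y$, i.e. $z+\nabla u(z)=\tilde y$. By the diffeomorphism property this $z$ is unique modulo $\Z^{n}$, and its class in $T^{n}$ is precisely $x_{y}=y+\nabla u^{c}(y)$ by the inverse-map statement of Lemma \ref{lem:quasi conv}. Crucially, uniqueness of the Euclidean minimizer forces the minimizing lattice vector $m$ to be unique, which is exactly the statement that $x_{y}$ avoids the cut locus of $y$; hence $x\mapsto d(x,y)^{2}/2$ is smooth near $x_{y}$ with Hessian $I$ there, and $x_{y}$ is the unique minimizer of $x\mapsto d(x,y)^{2}/2+u(x)$.

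The main obstacle is the cut-locus issue just flagged: a priori the infimum defining $d(x,y)^{2}$ need not be attained at a single $m$, and at such points the function is only Lipschitz, not smooth, so one cannot naively differentiate. The argument above circumvents this by lifting to $\R^{n}$ and invoking strict convexity of $\phi$ to get a \emph{unique} unconstrained minimizer, from which uniqueness of $m$ is automatic; I would make sure to spell out that only $C^{1}$ regularity of $u$ (hence continuity, not smoothness, of $\phi$'s gradient) is used, consistent with the hypothesis. The remaining assertions — that the Hessian of $d(\cdot,y)^{2}/2$ is the identity and that $x_{y}$ is given by \ref{eq:def of x y} — are then immediate from the explicit quadratic form and from the matrix/inverse relations already recorded in Lemma \ref{lem:quasi conv}.
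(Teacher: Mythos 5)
Your proposal is correct and follows essentially the same route as the paper: lift to $\R^{n}$, use the identity $\inf_{T^{n}}\bigl(d(\cdot,y)^{2}/2+u\bigr)=\inf_{\R^{n}}\bigl(|\cdot-y|^{2}/2+u\bigr)$, and invoke strict convexity of the lifted function (equivalently of $\phi=u+|\cdot|^{2}/2$) to conclude that the minimizing lattice translate is unique, so $x_{y}$ lies off the cut locus and $d(\cdot,y)^{2}/2$ is locally the quadratic $|\cdot-y|^{2}/2$ with Hessian $I$. The only cosmetic difference is that you argue directly via uniqueness of the unconstrained Euclidean minimizer where the paper argues by contradiction from a tied lattice vector.
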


\begin{proof}
First observe that, given $y\in T^{n},$ the infimum in question is
attained at $x_{y}$ (defined by formula \ref{eq:def of x y}), as
follows directly from combining formulas \ref{eq:leg transform of phi in terms of x y}
and \ref{eq:rel between leg trans and c leg}. Representing $x_{y}$
and $y$ with points in $\R^{n}$ we thus have

\[
d_{T^{n}}(x_{y},y)^{2}=\inf_{m\in\Z^{n}}|x_{y}+m-y|^{2}=|x_{y}+m_{0}-y|^{2}
\]
 for some $m_{0}\in\Z^{n}.$ We claim that, under the assumptions
of the lemma, the inf above is uniquely attained at $m_{0},$ i.e.
\[
m\neq m_{0}\implies|x_{y}+m-y|^{2}>|x_{y}+m_{0}-y|^{2}.
\]
To see this we note that, since $u$ is periodic, when viewed as a
function on $\R^{n},$ we have 
\begin{equation}
\inf_{x\in\R^{n}}d_{T^{n}}(x,y)^{2}/2+u(x)=\inf_{x\in\R^{n}}|x-y|^{2}/2+u(x)\label{eq:inf u equal to inf u}
\end{equation}
Since the inf in the left hand side above is attained at $x_{y}$
so is the inf in the right hand side. Now assume, to get a contradiction,
that the claim above does not hold, i.e. there exists a non-zero $m\in\Z^{n}$
such that $|x_{y}+m-y|=|x_{y}+m_{0}-y|.$ This implies that the inf
in the right hand side in formula \ref{eq:inf u equal to inf u} is
attained both at $x_{y}$ and at $x_{y}+m$ (since $u$ is periodic).
But this contradicts the fact that the function $x\mapsto|x-y|^{2}/2+u(x)$
is strictly convex on $\R^{n}$ (by the assumed strict quasi-convexity
of $u$ on $T^{n}).$ Finally, the claim shows, since the inequality
in the claim is preserved when $\bar{x}$ is perturbed slightly, that
$d_{T^{n}}(x,y)^{2}=|x-y|^{2}$ for all $x$ sufficiently close to
$x_{y}.$ Hence, $x\mapsto d_{T^{n}}(x,y)^{2}/2$ is smooth there
and its Hessian is constant, as desired.
\end{proof}

\subsection{The log Sinkhorn iteration on $C(X)$}

In this section we will consider an iteration on $C(X),$ which can
be viewed as a reformulation of the Sinkhorn algorithm and the Iterative
Proportional Fitting Procedure, recalled in Section \ref{subsec:The-Sinkhorn-algorithm}
(see Section \ref{subsec:Discretization-and-the}). Given data $(\mu,\nu,c),$
as in Section \ref{subsec:Recap-of-Optimal}, we first introduce the
following maps
\[
T_{\mu}:\,C(X)\rightarrow C(Y),\,\,\,u\mapsto v[u]:=\log\int e^{-c(x,\cdot)-u(x)}\mu(x)
\]
and 
\[
T_{\nu}:\,C(Y)\rightarrow C(X),\,\,\,v\mapsto u[v]:=\log\int e^{-c(\cdot,y)-v(y)}\nu(y)
\]
(abusing notation slightly we will write $T_{\mu}(u)=v[u]$ etc).
This yields an iteration on $C(X)$ defined by
\begin{equation}
u_{m+1}:=S[u_{m}],\label{eq:iteration for u}
\end{equation}
 where $S$ is defined as the the composed operator $T_{\nu}\circ T_{\mu}$
on $C(X):$ 
\[
S:\,C(X)\rightarrow C(X),\,\,\,u\mapsto u[v[u]]
\]
In lack for a better name the iteration \ref{eq:iteration for u}
will be called the\emph{ log Sinkhorn iteration }and the operator
$S$ will be called the\emph{ log Sinkhorn operator. }It will be convenient
to rewrite it as the following difference equation:
\begin{equation}
u_{m+1}-u_{m}=\log(\rho_{u_{m}}),\label{eq:difference eq}
\end{equation}
 where $\rho_{u}$ is defined by
\begin{equation}
\rho_{u}:=e^{S[u]-u}\label{eq:def of pho u in terms of S}
\end{equation}
and has the property that $\rho_{u}\mu$ is a probability measure
on $X$ (as follows directly from the definitions).

In this section we will use a variational approach to study the log
Sinhorn iteration. An alternative approach will also be used in Section
\ref{sec:Convergence-of-the iter torus} which relies on the observation
that the log Sinkhorn iteration contracts the $L^{\infty}-$distance
on $C(X)$ (see Step 2 in the proof of Lemma \ref{lem:general conv towards parab}).

\subsubsection{Existence and uniqueness of fixed points}

Consider the following functional $\mathcal{F}$ on $C(X):$ 

\begin{equation}
\mathcal{F}:=I_{\mu}-\mathcal{L},\,\,\,I_{\mu}(u)=\int_{X}u\mu,\,\,\,\mathcal{L}(u):=-\int_{Y}v[u]\nu.\label{eq:def of F and L}
\end{equation}
Note that $I_{\mu}$ and $\mathcal{L}$ are equivariant under the
additive action of $\R$ and hence $\mathcal{F}$ is invariant.
\begin{rem}
This functional can be viewed as an analog of the Kantorovich functional
$J(u)$ (formula \ref{eq:Kant functional}), where the $c-$Legendre
transform $u^{c}$ is replaced by $v[u].$ From a numerical perspective
this amounts to replacing the supremum defining $u^{c}$ by a ``soft
max'' \cite{m-p}. It is well-known that $\mathcal{F}$ decreases
under the log Sinkhorn iteration, i.e. that $\mathcal{F}(Su)\leq\mathcal{F}(u).$
This is usually shown using block coordinate descent on the ``dual
functional'' to the ``primal'' minimization problem in Prop \ref{prop:The-unique-minimizer};
see \cite[Prop 4.21]{m-p} and \cite[Section 3.1]{via}. But here
we observe that, in fact,\emph{ both} functionals $I_{\mu}$ and $-\mathcal{L}$
decrease along the iteration (see Step 1 in the proof of Theorem \ref{thm:conv of u m in general setting}).
This will be important in the proof of the last step of Theorem \ref{thm:conv of u m in general setting}
and in the proof of Prop \ref{prop:generalized parabolic}. 
\end{rem}

\begin{lem}
The following is equivalent:
\begin{itemize}
\item $u$ is a critical point for the functional $\mathcal{F}$ on $C(X)$
\item $\rho_{u}=1$ a.e. with respect to $\mu$
\end{itemize}
Moreover, if $u$ is a critical point, then $u_{*}:=S(u)$ is a fixed
point for the operator $S$ on $C(X)$
\end{lem}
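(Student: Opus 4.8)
The plan is to unwind the definitions and compute the first variation of $\mathcal{F}$. First I would recall that $v[u] = T_{\mu}(u)$ and $S[u] = u[v[u]]$, so that $\rho_u = e^{S[u]-u}$ and $\rho_u\mu \in \mathcal{P}(X)$. The natural first step is to differentiate $\mathcal{L}(u) = -\int_Y v[u]\,\nu$ along a perturbation $u \mapsto u + t\phi$, $\phi \in C(X)$. A direct computation gives
\[
\frac{d}{dt}\Big|_{t=0} v[u+t\phi](y) = \frac{\int_X (-\phi(x)) e^{-c(x,y)-u(x)}\,\mu(x)}{\int_X e^{-c(x,y)-u(x)}\,\mu(x)} = -\int_X \phi(x)\,\sigma_y(x),
\]
where $\sigma_y$ is the probability measure on $X$ with density proportional to $e^{-c(x,y)-u(x)}$ with respect to $\mu$. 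Hence
\[
\frac{d}{dt}\Big|_{t=0}\mathcal{L}(u+t\phi) = \int_Y \int_X \phi(x)\,\sigma_y(x)\,\nu(y).
\]
Interchanging the order of integration (justified by continuity and compactness) and using that $e^{u[v[u]](x)} = \int_Y e^{-c(x,y)-v[u](y)}\,\nu(y)$, one identifies the inner measure: integrating $\sigma_y$ against $\nu(y)$ reproduces, up to the normalization $e^{S[u](x)-u(x)} = \rho_u(x)$, the measure $\rho_u\mu$. Thus $d\mathcal{L}(u)[\phi] = \int_X \phi\,\rho_u\,\mu$, while trivially $dI_\mu(u)[\phi] = \int_X \phi\,\mu$, so
\[
d\mathcal{F}(u)[\phi] = \int_X \phi\,(1 - \rho_u)\,\mu.
\]
Since this vanishes for all $\phi \in C(X)$ iff $(1-\rho_u)\mu = 0$, i.e. iff $\rho_u = 1$ a.e. $[\mu]$, the equivalence follows.

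For the last assertion, suppose $u$ is a critical point, so $\rho_u = 1$ a.e. $[\mu]$, i.e. $S[u] = u$ a.e. on $X_\mu$ (the support of $\mu$). I would then observe that $S[u]$ depends on $u$ only through $v[u] = T_\mu(u) = \log\int_X e^{-c(x,\cdot)-u(x)}\,\mu(x)$, which in turn depends on $u$ only via its values on $X_\mu$ (the integral is against $\mu$). Therefore $T_\mu(S[u]) = T_\mu(u) = v[u]$, since $S[u]$ and $u$ agree $\mu$-a.e. Applying $T_\nu$ gives $S(S[u]) = T_\nu(T_\mu(S[u])) = T_\nu(v[u]) = S[u]$, so $u_* := S[u]$ is genuinely fixed by $S$ on all of $C(X)$.

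The only real subtlety — hardly an obstacle — is the bookkeeping in the Fubini step and the care needed because $\rho_u = 1$ only $\mu$-a.e., not everywhere; the resolution is exactly the remark above that $T_\mu$ (and hence $S$) only sees a function through its restriction to $X_\mu$, which is why passing from "critical point" to "honest fixed point" requires one application of $S$. Differentiation under the integral sign is justified throughout by the smoothness of $t \mapsto e^{-c(x,y)-u(x)-t\phi(x)}$ and compactness of $X \times Y$, which give uniform bounds on the difference quotients.
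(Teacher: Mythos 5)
Your proposal is correct and follows essentially the same route as the paper: compute the first variation of $\mathcal{L}$ by differentiating $v[u+t\phi]$, swap the order of integration to identify $d\mathcal{L}(u)$ with $\rho_u\mu$, and then note that $S$ only sees its argument through its class in $L^1(X,\mu)$, so $S(S(u))=S(u)$. The extra detail you give on the Fubini step and on why one application of $S$ upgrades an a.e.\ fixed point to a genuine one is exactly the content the paper compresses into a parenthetical.
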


\begin{proof}
First observe that the differential of the functional $\mathcal{L}$
defined in formula \ref{eq:def of F and L}, at an element $u\in C(X),$
is represented by the probability measure $\rho_{u}\mu,$ where $\rho_{u}$
is defined by formula \ref{eq:def of pho u in terms of S}. This means
that for any $\dot{u}\in C(X)$
\[
\frac{d(\mathcal{L}(u+t\dot{u}))}{dt}|_{t=0}=\int\dot{u}\rho_{u}\mu.
\]
This follows readily from the definitions by differentiating $t\mapsto v[(u+t\dot{u})]$
to get an integral over $(X,\mu)$ and then switching the order of
integration. As a consequence, $u$ is a critical point of the functional
$\mathcal{F}$ on $C^{0}(X)$ iff $\rho_{u}\mu=\mu,$ i.e. iff $\rho_{u}=1$
a.e. with respect to $\mu.$ Finally, if this is the case then $S(u)=u$
a.e wrt $\mu$ and hence $S(S(u))=S(u)$ (since $S(f)$ only depends
on $f$ viewed as an element in $L^{1}(X,\mu)).$ 
\end{proof}
The following basic compactness property holds:
\begin{lem}
\label{lem:compactness of function spaces}Given a point $x_{0}\in X$
the subset $\mathcal{K}_{x_{0}}$ of $C(X)$ defined as all elements
$u$ in the image of $S$ satisfying $u(x_{0})=0$ is compact in $C(X).$ 
\end{lem}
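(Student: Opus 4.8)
The plan is to show that $\mathcal{K}_{x_0}$ is bounded and equicontinuous in $C(X)$ and then invoke Arzelà–Ascoli together with the fact that it is a closed subset. The crucial observation is that every $u$ in the image of $S$ is of the form $u = u[v]$ for some $v \in C(Y)$, i.e. $u(x) = \log \int_Y e^{-c(x,y) - v(y)}\,\nu(y)$, and such functions inherit a modulus of continuity directly from that of the cost function $c$.

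First I would establish equicontinuity. Since $c$ is continuous on the compact space $X \times Y$, it is uniformly continuous; let $\omega$ be a modulus of continuity for $c$, so that $|c(x_1,y) - c(x_2,y)| \le \omega(d(x_1,x_2))$ for all $y$. For $u = u[v]$ in the image of $S$ one has, for any two points $x_1, x_2 \in X$,
\[
u(x_1) - u(x_2) = \log \frac{\int_Y e^{-c(x_1,y)-v(y)}\,\nu(y)}{\int_Y e^{-c(x_2,y)-v(y)}\,\nu(y)} \le \omega(d(x_1,x_2)),
\]
because $e^{-c(x_1,y)} \le e^{\omega(d(x_1,x_2))} e^{-c(x_2,y)}$ pointwise in $y$. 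By symmetry the reverse bound holds, so every element of the image of $S$ — in particular every element of $\mathcal{K}_{x_0}$ — has $\omega$ as a (common) modulus of continuity. This gives equicontinuity with no dependence on $u$ or $v$.

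Next, uniform boundedness follows from equicontinuity plus the normalization: any $u \in \mathcal{K}_{x_0}$ satisfies $u(x_0) = 0$, so $|u(x)| = |u(x) - u(x_0)| \le \omega(d(x,x_0)) \le \omega(\operatorname{diam} X)$ for all $x$, a bound independent of $u$. Hence $\mathcal{K}_{x_0}$ is a uniformly bounded, equicontinuous family, so by Arzelà–Ascoli its closure in $C(X)$ is compact. It remains to check that $\mathcal{K}_{x_0}$ is itself closed: if $u_j \to u$ uniformly with each $u_j = S(w_j)$ and $u_j(x_0) = 0$, then $u(x_0) = 0$ and $u$ has modulus of continuity $\omega$; moreover $u$ is still $c$-convex (it is a uniform limit of functions of the form $u[v]$, hence of $c$-convex functions, and $c$-convexity is preserved under uniform limits since $u^{cc} \le u \le$ limit of $u_j^{cc} = u_j$), and any $c$-convex function $u$ equals $u[u^c] = S(u^c)$... wait — more directly, one writes $u = u[v]$ with $v = u^c \in C(Y)$ once $u$ is known to be $c$-convex, exhibiting $u$ in the image of $S$ via $u = S(v^{c})$ if needed, or simply noting $u = T_\nu(u^c)$. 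Thus $u \in \mathcal{K}_{x_0}$.

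I expect the only genuinely delicate point to be the verification that $\mathcal{K}_{x_0}$ is closed, specifically that the limit $u$ lies in the \emph{image} of $S$ and not merely in $C(X)$. The argument above reduces this to the stability of $c$-convexity under uniform limits, which is elementary: the map $w \mapsto w^c$ is a contraction-type operation (indeed $1$-Lipschitz for the sup-norm since it is an infimum of affine shifts), so $u_j \to u$ uniformly forces $u_j^c \to u^c$ and $u_j^{cc} \to u^{cc}$ uniformly, and passing to the limit in $u_j^{cc} = u_j$ gives $u^{cc} = u$; then $u = (u^c)^c = T_\nu(u^c)$ lies in the image of $T_\nu$, hence (composing trivially) in the image of $S$. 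Everything else is a routine application of Arzelà–Ascoli.
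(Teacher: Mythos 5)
Your first two paragraphs are exactly the paper's argument: uniform continuity of $c$ on the compact product $X\times Y$ makes $S(C(X))$ equicontinuous (your pointwise bound $e^{-c(x_{1},y)}\leq e^{\omega}e^{-c(x_{2},y)}$ is the right way to see this), the normalization $u(x_{0})=0$ then gives a uniform bound, and Arzel\`a--Ascoli applies. The only cosmetic caveat is that $X$ is merely a compact topological space, so one should phrase equicontinuity without a metric $d$ on $X$; this changes nothing of substance.

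The closedness paragraph, however, contains a genuine error: you conflate the operator $T_{\nu}$ with the $c$-Legendre transform. For $v\in C(Y)$ one has $u[v](x)=\log\int_{Y}e^{-c(x,y)-v(y)}\nu(y)$, which is a ``soft'' (log-integral) version of $v^{c}(x)=\sup_{y}\bigl(-c(x,y)-v(y)\bigr)$; these two operators are different, and they only agree asymptotically after the rescaling $c\mapsto kc$, $k\to\infty$ (that is the content of Lemma \ref{lem:dens prop}). Consequently, knowing that a uniform limit $u$ satisfies $u^{cc}=u$ does not exhibit $u$ as $T_{\nu}(u^{c})$, so it does not place $u$ in the image of $T_{\nu}$, let alone of $S=T_{\nu}\circ T_{\mu}$. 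Moreover elements of $S(C(X))$ need not be $c$-convex in the first place, so even the premise of your limiting argument is unjustified. In fact the image of $S$ has no reason to be closed: a limit of functions $v[w_{j}]$ corresponds to a weak limit of the probability measures $e^{-w_{j}}\mu/\!\int e^{-w_{j}}\mu$, which can escape the class $\{e^{-w}\mu:\,w\in C(X)\}$ (e.g.\ by concentration), so the limit $u=u[v]$ need not have $v$ in the image of $T_{\mu}$. What your (and the paper's) Arzel\`a--Ascoli argument honestly yields is \emph{relative} compactness of $\mathcal{K}_{x_{0}}$, and that is all the paper ever uses: the companion Lemma \ref{lem:compactness of function with k} is stated as ``relatively compact'', and Theorem \ref{thm:conv of u m in general setting} passes to the closure of the orbit before invoking compactness. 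So the right repair is not to prove closedness but to read the conclusion as compactness of the closure of $\mathcal{K}_{x_{0}}$.
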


\begin{proof}
First observe that, since $X\times Y$ is assumed compact, the continuous
function $c$ is, in fact, uniformly continuous on $X.$ Hence, it
follows from the very definition of $S$ that $S(C(X))$ is an equicontinuous
family of continuous functions on $X.$ By Arzela-Ascoli theorem it
follows that the set $\mathcal{K}_{x_{0}}$ is  compact in $C(X).$ 
\end{proof}
Using the previous two lemmas gives the following
\begin{prop}
\label{prop:exist and uniq of fix point}The operator $S$ has a fixed
point $u_{*}$ in $C(X).$ Moreover, $u_{*}$ is uniquely determined
a.e. wrt $\mu$ up to an additive constant and $u_{*}$ minimizes
the functional $\mathcal{F}$. More precisely, there exists a unique
fixed point in $S(C(X))/\R.$
\end{prop}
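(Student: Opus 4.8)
The plan is to prove existence via the variational structure, and uniqueness via a separate argument exploiting the strict concavity properties hidden in $\mathcal L_\nu$. For existence, I would first pass to the quotient $C(X)/\R$, on which $\mathcal F$ descends (by the noted $\R$-invariance). The natural slice for this quotient is the affine subspace $\{u\in C(X):u(x_0)=0\}$, but that is not compact, so a direct "minimize over a compact set" argument does not immediately apply. Instead I would run the iteration: starting from any $u_0$, set $u_{m+1}=S[u_m]$ and normalize $\tilde u_m:=u_m-u_m(x_0)$. By Lemma \ref{lem:compactness of function spaces} all the $\tilde u_m$ for $m\geq1$ lie in the compact set $\mathcal K_{x_0}$, so a subsequence converges uniformly to some $u_\infty$. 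The key monotonicity observation is that $\mathcal F(u_{m+1})\leq\mathcal F(u_m)$ along the iteration: indeed, from the difference equation \ref{eq:difference eq} and the fact that $\rho_{u_m}\mu$ is a probability measure, one computes $\mathcal F(u_m)-\mathcal F(u_{m+1})=\int(u_m-u_{m+1})\mu-(\mathcal L(u_m)-\mathcal L(u_{m+1}))$, and since $\mathcal L$ is concave (its differential at $u$ is $\rho_u\mu$, and a second differentiation shows $\mathcal L$ is concave along lines — the variance of a probability measure is nonnegative) one gets $\mathcal L(u_{m+1})-\mathcal L(u_m)\geq\int(u_{m+1}-u_m)\rho_{u_m}\mu=-\int\log\rho_{u_m}\,\rho_{u_m}\mu\geq-\int\rho_{u_m}\mu+1=0$ by the elementary inequality $-t\log t\geq -t+1$... wait, one wants the sign the other way; the cleaner route is: $\mathcal F(u_m)-\mathcal F(u_{m+1})=\int\log\rho_{u_m}\,\rho_{u_m}\mu - \big(\mathcal L(u_{m+1})-\mathcal L(u_m)\big)$ and by concavity $\mathcal L(u_{m+1})-\mathcal L(u_m)\leq\int(u_{m+1}-u_m)\rho_{u_m}\mu=\int\log\rho_{u_m}\,\rho_{u_m}\mu$, hence $\mathcal F(u_m)-\mathcal F(u_{m+1})\geq0$. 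So $\mathcal F(u_m)$ decreases; moreover it is bounded below on $\mathcal K_{x_0}$ by continuity and compactness (and $\mathcal F$ is invariant under the $\R$-action, so $\mathcal F(u_m)=\mathcal F(\tilde u_m)$), so $\mathcal F(u_m)-\mathcal F(u_{m+1})\to0$. Passing to the subsequential limit $u_\infty$, the quantity $\int\log\rho_{u_\infty}\,\rho_{u_\infty}\mu-\mathcal L(u_\infty-u_\infty)$... concretely, the vanishing of the decrement forces $\rho_{u_\infty}=1$ a.e.\ $\mu$ (via strict concavity / the equality case $-t\log t = 1-t \iff t=1$), so $u_\infty$ is a critical point of $\mathcal F$ by the first lemma, and then $u_*:=S(u_\infty)$ is a fixed point by its last assertion.

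For uniqueness up to an additive constant, a.e.\ $\mu$: suppose $u$ and $u'$ are both critical points of $\mathcal F$. The clean argument is that $\mathcal F$ is \emph{strictly} convex on $C(X)/\R$ modulo $\mu$-a.e.\ equality — equivalently, that $\mathcal L$ is \emph{strictly} concave there. Indeed, along the segment $u_t=(1-t)u+tu'$ one has $\frac{d^2}{dt^2}\mathcal L(u_t)=-\mathrm{Var}_{\rho_{u_t}\mu\otimes\nu\text{-coupling}}(\cdots)$, which vanishes only when $u'-u$ is $\mu$-a.e.\ constant on the fibers of the relevant coupling; using that the optimal-type coupling has full support relative to $\mu$ (both marginals charge all of $\mathrm{supp}\,\mu$), this forces $u'-u$ to be $\mu$-a.e.\ constant. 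Alternatively, and more in the spirit of the paper, one can argue directly: if $u,u'$ are two fixed points of $S$, set $w=u'-u$ and $M=\mathrm{ess\,sup}_\mu w$; at a point where $w$ is close to $M$, comparing the defining integrals for $u=u[v[u]]$ and $u'=u[v[u']]$ and using that $v[u']-v[u]\geq -M$ pointwise (by monotonicity of $T_\mu$) and the strict convexity of $\log\int e^{\cdot}$, one deduces $w\equiv M$ a.e.\ $\mu$. This is the standard "strong maximum principle" for such log-integral operators. Finally, since any fixed point lies in $S(C(X))$ and $S(f)$ depends only on $f\in L^1(X,\mu)$, a.e.-uniqueness upgrades to the statement that $S(C(X))/\R$ contains exactly one fixed point.

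The main obstacle I anticipate is making the passage-to-the-limit step for existence fully rigorous: one needs that $u\mapsto\mathcal L(u)$ and $u\mapsto\int\log\rho_u\,\rho_u\mu$ are continuous on $\mathcal K_{x_0}$ in the sup-norm (true, since $c$ is uniformly continuous and the integrands are continuous in $u$ uniformly), and one needs the equality case of the concavity inequality $\mathcal L(u_{m+1})-\mathcal L(u_m)\leq\int\log\rho_{u_m}\,\rho_{u_m}\mu$ to be controlled — i.e.\ that a vanishing decrement in the limit genuinely forces $\rho_{u_\infty}\equiv1$ rather than merely "the first-order term vanishes". The cleanest fix is to note that, because $\mathcal F(u_m)$ is bounded below and monotone, $\mathcal F(u_m)\to\inf\mathcal F$ restricted to $\mathcal K_{x_0}$, and then show $\mathcal F$ actually \emph{attains} its infimum on $\mathcal K_{x_0}$ at some $u_\infty$ (compactness + lower semicontinuity) and that a minimizer of $\mathcal F$ over the $S$-invariant compact set is automatically a critical point — here one uses that $\mathcal F(S(u))\leq\mathcal F(u)$ with equality iff $\rho_u=1$ a.e.\ $\mu$, so a minimizer is a critical point. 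With that in hand the rest is bookkeeping.
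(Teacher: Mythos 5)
Your final version of the argument is essentially the paper's proof: minimize $\mathcal{F}$ over the compact set $\mathcal{K}_{x_{0}}$ of normalized elements of $S(C(X))$ (Lemma \ref{lem:compactness of function spaces}), use the monotonicity $\mathcal{F}(S(u))\leq\mathcal{F}(u)$ together with its equality case to conclude that a minimizer $u$ satisfies $\rho_{u}=1$ a.e.\ wrt $\mu$, hence $u_{*}:=S(u)$ is a fixed point; and get uniqueness from strict convexity of $\mathcal{F}$ on $C(X)/\R$ viewed inside $L^{1}(\mu)/\R$. Your variance computation for $\frac{d^{2}}{dt^{2}}\mathcal{L}(u_{t})$ is exactly the content of the paper's ``Jensen's (or H\"older's) inequality'' strict-convexity claim; the point that makes it work is that, for each $y$, the fiber measure $e^{-c(\cdot,y)-u(\cdot)}\mu/\int e^{-c(\cdot,y)-u}\mu$ is mutually absolutely continuous with $\mu$, so vanishing variance forces $u'-u$ to be constant a.e.\ wrt $\mu$ (your phrase about ``fibers of the relevant coupling'' should be sharpened to this). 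Your alternative uniqueness argument via the sup-comparison/maximum principle for the log-integral operator is a genuinely different and viable route, close in spirit to the Hilbert-metric (Birkhoff) contraction mentioned in the remark following Theorem \ref{thm:conv of u m in general setting}; it avoids convexity altogether at the price of carrying out the equality analysis in the comparison carefully.

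The one genuine defect is your derivation of the key inequality $\mathcal{F}(S(u))\leq\mathcal{F}(u)$ itself. The identity you write, $\mathcal{F}(u_{m})-\mathcal{F}(u_{m+1})=\int\log\rho_{u_{m}}\,\rho_{u_{m}}\mu-\bigl(\mathcal{L}(u_{m+1})-\mathcal{L}(u_{m})\bigr)$, is not correct: since $u_{m+1}-u_{m}=\log\rho_{u_{m}}$, the $I_{\mu}$-difference equals $-\int\log\rho_{u_{m}}\,\mu$, not $\int\log\rho_{u_{m}}\,\rho_{u_{m}}\mu$ (the latter is the differential of $\mathcal{L}$ at $u_{m}$), and the sign in front of the $\mathcal{L}$-difference is also off; the correct identity is $\mathcal{F}(u_{m})-\mathcal{F}(u_{m+1})=-\int\log\rho_{u_{m}}\mu+\bigl(\mathcal{L}(u_{m+1})-\mathcal{L}(u_{m})\bigr)$. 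Moreover, the first-order concavity bound $\mathcal{L}(u_{m+1})-\mathcal{L}(u_{m})\leq\int\log\rho_{u_{m}}\,\rho_{u_{m}}\mu$ points the wrong way: inserted into the correct identity it only yields an upper bound $\mathcal{F}(u_{m})-\mathcal{F}(u_{m+1})\leq\int\log\rho_{u_{m}}(\rho_{u_{m}}-1)\mu$, which proves nothing; a one-sided concavity estimate at $u_{m}$ cannot give monotonicity. The paper's route (Step 1 of the proof of Theorem \ref{thm:conv of u m in general setting}) is two separate applications of Jensen's inequality: on the $X$-side, $I_{\mu}(u_{m+1})-I_{\mu}(u_{m})=\int\log\rho_{u_{m}}\mu\leq\log\int\rho_{u_{m}}\mu=0$, and on the $Y$-side, with the roles of $\mu$ and $\nu$ interchanged, $\mathcal{L}(u_{m})-\mathcal{L}(u_{m+1})=\int\log\rho_{v_{m}}\nu\leq0$; both differences are nonpositive, so $\mathcal{F}$ decreases, and the equality case of Jensen in the first estimate is exactly what gives $\rho_{u}=1$ a.e.\ wrt $\mu$. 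With this repair (which is short), the rest of your existence and uniqueness arguments go through as in the paper.
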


\begin{proof}
We start by noting that 
\[
\mathcal{F}(Su)\leq\mathcal{F}(u)
\]
 (this is shown in the first step of Theorem \ref{thm:conv of u m in general setting}
below). Since $\mathcal{F}$ is invariant under the natural $\R-$action
we conclude that 
\[
\inf_{C(X)}\mathcal{F}=\inf_{\mathcal{K}_{0}}\mathcal{F},
\]
 where $\mathcal{K}_{0}$ denotes the compact subset of $C(X)$ appearing
in Lemma \ref{lem:compactness of function spaces}. Since $\mathcal{F}$
is clearly continuous on $C(X)$ this implies the existence of a minimizer
of $\mathcal{F}$ which is moreover in $\mathcal{K}_{0}.$

Next observe that $\mathcal{F}$ is convex on $C(X).$ Indeed, for
any fixed $y\in Y,$ $u\mapsto v[u](y)$ is convex on $C(X),$ as
follows directly from Jensen's inequality. Hence, $-\mathcal{L}$
is convex and since $I_{\mu}$ is affine we conclude that $\mathcal{F}$
is convex. More precisely, Jensen's (or Hölder's) inequality implies
that $\mathcal{F}$ is strictly convex on $C(X)/\R$ viewed as a subset
of $L^{1}(\mu)/\R.$ Hence, if $u_{0}$ and $u_{1}$ are two minimizers,
then there exists a constant $C$ such that $u_{0}=u_{1}+C$ a.e.
wrt $\mu.$ In particular, if $C=0$ then $u_{*}:=S(u_{0})=S(u_{1})$
gives the same fixed point of $S.$
\end{proof}

\subsubsection{Monotonicity and convergence properties of the iteration}

We next establish the following result, which can be seen as a refinement,
in the present setting, of the convergence of the general Iterative
Proportional Fitting Procedure established in \cite{r}. The result
will be used in the proof of Proposition \ref{prop:generalized parabolic}. 
\begin{thm}
\label{thm:conv of u m in general setting}Given $u_{0}\in C(X)$
the corresponding iteration $u_{m}:=S^{m}u_{0}$ converges uniformly
to a fixed point $u_{\infty}$ of $S.$ 
\end{thm}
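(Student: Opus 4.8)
The plan is to exploit the variational structure already set up: the functional $\mathcal{F} = I_\mu - \mathcal{L}_\nu$ is convex on $C(X)$, invariant under the additive $\R$-action, and — as claimed in the proof of Proposition \ref{prop:exist and uniq of fix point} — decreases along the iteration, $\mathcal{F}(Su) \le \mathcal{F}(u)$. So the first step is to establish this monotonicity carefully. I would compute $\mathcal{F}(Su) - \mathcal{F}(u)$ and show it equals (up to sign) a relative-entropy-type quantity that is manifestly nonpositive. Concretely, since $S = T_\nu \circ T_\mu$, one writes $Su - u = \log \rho_u$ with $\rho_u \mu \in \mathcal{P}(X)$, and the drop in $\mathcal{F}$ should come out as $-\int \log(\rho_u)\,\rho_u\,\mu$ minus a symmetric term on the $Y$ side (or a Kullback–Leibler divergence of the induced coupling against the previous one); either way Jensen's inequality gives $\mathcal{F}(Su) \le \mathcal{F}(u)$, with equality iff $\rho_u = 1$ a.e.\ $\mu$, i.e.\ iff $u$ is already (a.e.) a fixed point. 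This is the technical heart and the step I expect to be the main obstacle: getting the exact algebraic identity for the decrement and verifying the equality case.

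Next I would extract compactness. By Lemma \ref{lem:compactness of function spaces}, for $m \ge 1$ the normalized iterates $\tilde u_m := u_m - u_m(x_0)$ lie in the compact set $\mathcal{K}_{x_0} \subset C(X)$, so some subsequence $\tilde u_{m_j}$ converges uniformly to a limit $u_\infty \in \mathcal{K}_{x_0}$. The monotone bounded sequence $\mathcal{F}(u_m) = \mathcal{F}(\tilde u_m)$ converges to some limit $\ell$, and by continuity $\mathcal{F}(u_\infty) = \ell$. Applying $S$ (which is continuous on $C(X)$ and commutes with the $\R$-action), $S\tilde u_{m_j} = \tilde u_{m_j + 1} + (\text{const})$ converges, so $\mathcal{F}(S u_\infty) = \lim \mathcal{F}(u_{m_j+1}) = \ell = \mathcal{F}(u_\infty)$. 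Equality in the monotonicity inequality then forces $\rho_{u_\infty} = 1$ a.e.\ $\mu$, hence $S u_\infty = u_\infty$ a.e.\ $\mu$, and since $S$ factors through $L^1(\mu)$ we get $S(S u_\infty) = S u_\infty$; replacing $u_\infty$ by $S u_\infty$ (which changes nothing if $u_\infty$ was already in the image of $S$, as it is for $m_j \ge 1$) we have a genuine fixed point.

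Finally I would upgrade subsequential convergence to full convergence of $u_m$, not merely of a normalized subsequence. The clean way: Proposition \ref{prop:exist and uniq of fix point} gives uniqueness of the fixed point in $S(C(X))/\R$, so \emph{every} subsequential limit of $\tilde u_m$ equals the same $u_\infty \in \mathcal{K}_{x_0}$; since $\{\tilde u_m\}_{m\ge 1}$ lies in the compact set $\mathcal{K}_{x_0}$ and has a unique cluster point, $\tilde u_m \to u_\infty$ uniformly. It then remains to control the additive constants $c_m := u_m(x_0)$, i.e.\ to show $u_m$ itself (not just $\tilde u_m$) converges; here one checks that $u_{m+1} - u_m = \log \rho_{u_m} \to 0$ uniformly — which follows because $\rho_{u_m} = \rho_{\tilde u_m} \to \rho_{u_\infty} = 1$ uniformly by continuity of $u \mapsto \rho_u$ — so the telescoping differences are summable in the sense needed, or more simply $c_{m+1} - c_m \to 0$ together with a monotonicity or boundedness argument for $c_m$ pins down a limit. (If the constants were to drift, one restricts attention to the quotient $C(X)/\R$, where convergence is already complete; the statement as written presumably intends convergence up to this standard normalization, or with $u_0$ itself already suitably normalized.) Assembling these pieces gives $u_m \to u_\infty$ uniformly with $S u_\infty = u_\infty$.
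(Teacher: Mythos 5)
Your overall strategy coincides with the paper's: use $\mathcal{F}=I_{\mu}-\mathcal{L}_{\nu}$ as a Lyapunov functional decreasing along the iteration via Jensen's inequality, extract a subsequential limit from the compactness of Lemma \ref{lem:compactness of function spaces}, identify it as a fixed point through the equality case of the monotonicity, and upgrade to full convergence in $C(X)/\R$ by the uniqueness in Proposition \ref{prop:exist and uniq of fix point}. One small remark on your ``Step 1'': rather than computing the decrement of $\mathcal{F}$ as a single relative-entropy expression, the paper shows the two pieces are \emph{separately} monotone, namely $I_{\mu}(u_{m+1})-I_{\mu}(u_{m})=\int\log\rho_{u_{m}}\,\mu\leq\log\int\rho_{u_{m}}\,\mu=0$ and, by the symmetric argument on the $Y$ side, $\mathcal{L}(u_{m+1})\geq\mathcal{L}(u_{m})$. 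This is not just a cosmetic difference: the separate monotonicity of $I_{\mu}$ and $\mathcal{L}$ is exactly the tool you are missing at the end.

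The genuine gap is in your treatment of the additive constants $c_{m}:=u_{m}(x_{0})$. The theorem asserts convergence in $C(X)$, not merely in $C(X)/\R$ (the introduction stresses precisely that $a(m)$ and $b(m)$ converge \emph{without} rescaling), so your fallback ``the statement presumably intends convergence up to normalization'' concedes the point at issue. Your proposed mechanism, $c_{m+1}-c_{m}=\log\rho_{u_{m}}(x_{0})\to0$, does not imply convergence of $c_{m}$ (compare $c_{m}=\log m$), and you do not supply the needed boundedness. The paper closes this as follows: $I_{\mu}(u_{m})$ is decreasing (Step 1) and bounded below, because $I_{\mu}=\mathcal{F}+\mathcal{L}_{\nu}$ with $\mathcal{F}\geq\inf\mathcal{F}>-\infty$ and $\mathcal{L}_{\nu}(u_{m})\geq\mathcal{L}_{\nu}(u_{0})$ (the increasing piece); hence $I_{\mu}(u_{m})\to\lambda$ for some $\lambda\in\R$. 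Writing $u_{m}=\tilde{u}_{m}+c_{m}$ with $\tilde{u}_{m}$ in the compact set $\mathcal{K}_{x_{0}}$, the boundedness of $I_{\mu}(u_{m})$ forces $|c_{m}|\leq C$, so $(u_{m})$ itself is relatively compact, and every cluster point is the unique fixed point in $S(C(X))/\R$ with the normalization $I_{\mu}=\lambda$; hence the whole sequence converges. You should incorporate this argument to complete the proof.
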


\begin{proof}
\emph{Step 1: $I_{\mu}$ and $-\mathcal{L}$ are decreasing along
the iteration and hence $\mathcal{F}$ is also decreasing. The functionals
are strictly decreasing at $u_{m}$ unless $S(u_{*})=u_{*}$ for $u_{*}:=S(u_{m}).$ }

Using the difference equation \ref{eq:difference eq} for $u_{m}$
and Jensen's inequality, we have 
\[
I_{\mu}(u_{m+1})-I_{\mu}(u_{m})=\int\log\rho_{u_{m}}\mu\leq\log\int\rho_{u_{m}}\mu=\log1=0
\]
Moreover, equality holds unless $\rho_{u_{m}}=1$ a.e wrt $\mu$ i.e.
$S(u_{m})=u_{m}$ and $S(u_{*})=u_{*}$ everywhere on $X.$ Similarly,
by symmetry, 

\[
\mathcal{L}(u_{m})-\mathcal{L}(u_{m+1})=\int\log\rho_{v_{m}}\nu\leq\log\int\rho_{v_{m}}\nu=\log1=0,
\]
 where now $\rho_{v},$ for $v\in C(Y),$ denotes the probability
measure on $Y$ defined as in formula \ref{eq:def of pho u in terms of S},
but with the roles of $\mu$ and $\nu$ interchanged.

\emph{Step 2: Convergence in $C(X)/\R.$}

Given the initial data $u_{0}$ we denote by $\mathcal{K}_{u_{0}}$
the closure of the orbit of $u_{0}$ in $C(X)$ under repeated application
of $S.$ By Lemma \ref{lem:compactness of function spaces} $\mathcal{K}_{u_{0}}/\R$
is compact in $C(X)/\R.$ Hence, after perhaps passing to a subsequence,
$u_{m}\rightarrow u_{\infty}$ in $C(X)/\R.$ Now, since $\mathcal{F}$
is decreasing along the orbit we have 
\[
\mathcal{F}(u_{\infty})=\inf_{\mathcal{K}_{0}}\mathcal{F}.
\]
 Hence, by the condition for strict monotonicity it must be that $Su_{\infty}=u_{\infty}$
a.e. wrt $\mu$ and hence, since $u_{\infty}$ is the image of $S,$
it follows that $Su_{\infty}=u_{\infty}$ on all of $X.$ It then
follows from Proposition \ref{prop:exist and uniq of fix point} that
$u_{\infty}$ is uniquely determined in $C(X)/\R$ (by the initial
data $u_{0}),$ i.e. the whole sequence converges in $C(X)/\R.$

\emph{Step $3:$ Convergence in $C(X)$}

Let us first show that there exists a number $\lambda\in\R$ such
that 
\begin{equation}
\lim_{m\rightarrow\infty}I_{\mu}(u_{m})=\lambda.\label{eq:limit is lambda}
\end{equation}
 By Step $1$ $I_{\mu}$ is decreasing and hence it is enough to show
that $I_{\mu}(u_{m})$ is bounded from below. But $I_{\mu}=\mathcal{F}+\mathcal{L},$
where, by Prop \ref{prop:exist and uniq of fix point} (or the previous
step) $\mathcal{F}$ is bounded from below (by $\mathcal{F}(u_{\infty}))$.
Moreover, by the first step $\mathcal{L}(u_{m})\geq\mathcal{L}(u_{0}),$
which concludes the proof of \ref{eq:limit is lambda}. Next, decompose 

\[
u_{m}=\tilde{u}_{m}+u_{m}(x_{0}),\,\,\,
\]
 By Lemma \ref{lem:compactness of function spaces} the sequence $(\tilde{u}_{m})$
is relatively compact in $C(X)$ and we claim that $|u_{m}(x_{0})|\leq C$
for some constant $C.$ Indeed, if this is not the case then there
is a subsequence $u_{m_{j}}$ such that $|u_{m_{j}}|\rightarrow\infty$
uniformly on $X.$ But this contradicts that $I_{\mu}(u_{m})$ is
uniformly bounded (by \ref{eq:limit is lambda}). It follows that
the sequence $(u_{m})$ is also relatively compact. Hence, by the
previous step the whole sequence $u_{m}$ converges to the unique
minimizer $u_{*}$ of $\mathcal{F}$ in $S(C(X))$ satisfying $I_{\mu}(u_{*})=\lambda.$
\end{proof}
\begin{rem}
\label{rem:lin convergence rate}The convergence result in \cite{r}
is, in the present setting, equivalent to convergence of the induced
iteration on $C(X)/\R.$ In fact, the latter convergence holds at
linear rate, i.e. there exists a norm $\left\Vert \cdot\right\Vert _{C(X)/\R}$
on $C(X)/\R$ and a positive number $\delta$ such that $\left\Vert u-u_{\infty}\right\Vert _{C(X)/\R}\leq e^{-\delta m}.$
Indeed, setting $\left\Vert u-u'\right\Vert _{C(X)/\R}:=\left\Vert \sup(u-u')-\inf(u-u')\right\Vert _{C(X)}$
(which corresponds, under $u\mapsto e^{-u},$ to the Hilbert metric
on the cone of positive functions in $C(X))$ this follows from Birkhoff's
theorem about positive operators on cones, precisely as in the finite
dimensional situation of the Sinkhorn iteration considered in \cite{f-l};
see also \cite[Thm 4.2]{m-p} and \cite{via}. 
\end{rem}

\subsubsection{The induced discrete evolution on $C(X)\times C(Y)$}

Fixing an initial function $u_{0}\in C(X)$ the corresponding evolution
$m\mapsto u_{m}$ induces a sequence of pairs $(u_{m},v_{m})\in C(X)\times C(Y)$
defined by the following recursion: 
\[
(u_{m+1},v_{m+1}):=(u[v_{m+1}],v[u_{m}])
\]

\subsubsection{The induced discrete evolution on $\mathcal{P}(X\times Y)$ and entropy}

Let us briefly explain the dual point of view involving the space
$\mathcal{M}(X\times Y)$ of measures on $X\times Y$ (which, however,
is not needed for the proofs of the main results). The data $(\mu,\nu,c)$
induces the following element $\gamma_{c}\in\mathcal{M}(X\times Y):$
\[
\gamma_{c}:=e^{-c}\mu\otimes\nu
\]
Given a function $u\in C(X)$ we will write 
\begin{equation}
\gamma_{u}:=e^{-(u+v[u])}\gamma_{c}\label{eq:def of gamma u}
\end{equation}

\begin{lem}
\label{lem:-u fixed iff gamma u is in Pi}$u$ satisfies $S(u)=u$
a.e. wrt $\mu$ iff $\gamma_{u}\in\Pi(\mu,\nu).$
\end{lem}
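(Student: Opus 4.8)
The plan is to unwind the definitions on both sides and check the pushforward (marginal) conditions directly. Recall that $\gamma_u := e^{-(u + v[u])}\gamma_c = e^{-(u(x) + v[u](y) + c(x,y))}\,\mu\otimes\nu$, where I am identifying $u$ and $v[u]$ with their pullbacks to $X\times Y$. The first observation, true for \emph{any} $u\in C(X)$, is that the pushforward of $\gamma_u$ to $Y$ is always equal to $\nu$: integrating out the $x$-variable, $\int_X e^{-(u(x)+c(x,y))}\mu(x) = e^{v[u](y)}$ by the very definition of the map $T_\mu$ (i.e. of $v[u]$), so $\int_X \gamma_u = e^{v[u](y)} e^{-v[u](y)}\nu(y) = \nu(y)$. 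Thus $\gamma_u$ automatically has correct $Y$-marginal, and the only condition that can fail is the $X$-marginal.

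Next I would compute the pushforward of $\gamma_u$ to $X$. Integrating out the $y$-variable gives $\left(\int_Y e^{-(v[u](y)+c(x,y))}\nu(y)\right) e^{-u(x)}\mu(x)$. By definition of the map $T_\nu$, the inner integral is exactly $e^{u[v[u]](x)} = e^{S[u](x)}$, so the $X$-marginal of $\gamma_u$ equals $e^{S[u](x) - u(x)}\mu(x) = \rho_u\,\mu$, where $\rho_u$ is as in \eqref{eq:def of pho u in terms of S}. Therefore $\gamma_u$ has $X$-marginal equal to $\mu$ if and only if $\rho_u = 1$ a.e.\ with respect to $\mu$, which is precisely the condition $S(u) = u$ a.e.\ wrt $\mu$ (since $\log\rho_u = S[u]-u$, and $\rho_u\equiv 1$ $\mu$-a.e.\ is equivalent to $S[u]=u$ $\mu$-a.e.). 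Combining the two marginal computations: $\gamma_u\in\Pi(\mu,\nu)$ iff both marginals are right, iff the $X$-marginal is right (the $Y$-one being automatic), iff $\rho_u=1$ $\mu$-a.e., iff $S(u)=u$ $\mu$-a.e.

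The only genuinely technical points are the two applications of Tonelli/Fubini to switch the order of integration — these are justified because $c$ is continuous on the compact space $X\times Y$ and $u,v[u]$ are continuous, so all integrands are bounded and the double integrals are finite — together with noting that $\gamma_u$ is indeed a finite positive measure (again by boundedness of $c$, $u$, $v[u]$ on compacta). I do not anticipate any real obstacle here; the statement is essentially a bookkeeping identity flowing from the defining formulas for $T_\mu$, $T_\nu$, and $\rho_u$, and the one substantive structural fact — that the $Y$-marginal of $\gamma_u$ is pinned to $\nu$ for all $u$, so that being a transport plan is a genuinely one-sided condition — drops out immediately from how $v[u]$ is defined.
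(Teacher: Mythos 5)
Your proof is correct and is essentially identical to the paper's: the paper's entire argument consists of the two marginal identities $\int_{X}e^{-(u+v[u])}\gamma_{c}=\nu$ and $\int_{Y}e^{-(u+v[u])}\gamma_{c}=\rho_{u}\mu$, which are exactly the computations you carry out. Your added remarks on Tonelli and finiteness are harmless bookkeeping that the paper leaves implicit.
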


\begin{proof}
A direct computation reveals that the push-forwards of $e^{-(u+v[u])}\gamma_{c}$
to $X$ and $Y,$ respectively, are given by
\[
\int_{X}e^{-(u+v[u])}\gamma_{c}=\nu,\,\,\,\,\int_{Y}e^{-(u+v[u])}\gamma_{c}=\rho_{u}\mu
\]
Hence, $\gamma_{u}\in\Pi(\mu,\nu)$ iff $\rho_{u}\mu=\mu,$ which,
by the definition \ref{eq:def of pho u in terms of S} of $\rho_{u},$
concludes the proof.
\end{proof}
The discrete dynamical system $u_{m}$ induces a sequence 
\[
\gamma_{m}:=\gamma_{u_{m}}(=e^{-u_{m}(x)}e^{-v_{m}(x)}\gamma_{c})\in\mathcal{P}(X\times Y)
\]

\begin{prop}
\label{prop:The-unique-minimizer}The unique minimizer $\gamma_{*}$
of the functional $\mathcal{I}(\cdot|\gamma_{c})$ on $\Pi(\mu,\nu)$
is characterized by the property that it has the form 
\[
\gamma_{_{*}}=e^{-\Phi}\gamma_{c}
\]
 for some $\Phi\in C(X)+C(Y).$ Moreover, $\gamma_{_{*}}=\gamma_{u_{*}}$,
where $u_{*}$ is a fixed point for $S$ on $C(X)$ (or more generally,
on $L^{1}(X,\mu))$ and 
\begin{equation}
\inf_{\Pi(\mu,\nu)}\mathcal{I}(\cdot|\gamma_{c})=\inf_{C(X)\times C(Y)}\mathcal{F}\label{eq:inf of I is inf of F}
\end{equation}
and given any function $u_{0}\in C(X),$ the corresponding sequence
$\gamma_{m}$ converges in $L^{1}$ (i.e. in variation norm) towards
$\gamma_{*}$ (and moreover $\mathcal{I}(\gamma_{m}|\gamma_{*})\rightarrow0).$
\end{prop}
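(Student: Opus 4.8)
The plan is to combine the variational characterization of $\gamma_*$ with the convergence of the iteration on $C(X)$ established in Theorem \ref{thm:conv of u m in general setting}. First I would record the elementary ``Pythagorean'' identity for the Kullback-Leibler divergence: for any $\gamma \in \Pi(\mu,\nu)$ and any $\Phi$ of the form $u+v$ with $u\in C(X)$, $v\in C(Y)$, one has
\[
\mathcal{I}(\gamma|\gamma_c) = \mathcal{I}(\gamma|e^{-\Phi}\gamma_c) + \int \Phi\, \gamma + \log\int e^{-\Phi}\gamma_c,
\]
and the last two terms depend only on the marginals $\mu$, $\nu$ (since $\int (u+v)\gamma = \int u\mu + \int v\nu$) and hence are constant over $\Pi(\mu,\nu)$. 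Thus if $e^{-\Phi}\gamma_c$ itself lies in $\Pi(\mu,\nu)$, then $\mathcal{I}(\gamma|\gamma_c)$ is minimized precisely by $\gamma = e^{-\Phi}\gamma_c$, and the minimizer is unique by strict convexity of $t\mapsto -\log t$. By Lemma \ref{lem:-u fixed iff gamma u is in Pi}, $\gamma_{u_*} = e^{-(u_*+v[u_*])}\gamma_c$ does lie in $\Pi(\mu,\nu)$ whenever $u_*$ is a fixed point of $S$ (a.e.\ wrt $\mu$), and such a fixed point exists by Proposition \ref{prop:exist and uniq of fix point}; this gives $\gamma_* = \gamma_{u_*}$ and simultaneously shows that any minimizer of the stated special form must be this one. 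The identity \eqref{eq:inf of I is inf of F} then follows by reading off the constant: evaluating $\mathcal{I}(\cdot|\gamma_c)$ at $\gamma_*$, the first term on the right vanishes and one is left with $\int(u_*+v_*)\gamma_* + \log\int e^{-(u_*+v_*)}\gamma_c = \int u_*\mu + \int v_*\nu + 0 = -\mathcal{F}(u_*,v_*)$ up to sign bookkeeping; minimizing over $\Pi(\mu,\nu)$ on the left corresponds to minimizing $\mathcal{F}$ over $C(X)\times C(Y)$ on the right, using that $\mathcal{F}$ is minimized at a fixed point (shown in Theorem \ref{thm:conv of u m in general setting}).

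For the convergence statement, given arbitrary $u_0 \in C(X)$ I would invoke Theorem \ref{thm:conv of u m in general setting} to get $u_m \to u_\infty$ uniformly with $u_\infty$ a fixed point of $S$, whence also $v_m = v[u_m] \to v[u_\infty] =: v_\infty$ uniformly by continuity of $T_\mu$. Therefore $\gamma_m = e^{-(u_m+v_m)}\gamma_c \to e^{-(u_\infty+v_\infty)}\gamma_c = \gamma_{u_\infty}$ uniformly in density against the fixed reference $\gamma_c$, hence in total variation (variation norm), and since $u_\infty$ is a fixed point, $\gamma_{u_\infty} = \gamma_*$ by the uniqueness just established. Finally $\mathcal{I}(\gamma_m|\gamma_*) \to 0$ follows either directly from the uniform convergence of the densities $e^{-(u_m+v_m)+(u_\infty+v_\infty)}$ to $1$ (so the integrand $\frac{d\gamma_m}{d\gamma_*}\log\frac{d\gamma_m}{d\gamma_*}$ converges uniformly to $0$), or alternatively from the Pythagorean identity applied with $\Phi = u_\infty + v_\infty$: since $\gamma_m$ and $\gamma_*$ share the marginals, $\mathcal{I}(\gamma_m|\gamma_c) = \mathcal{I}(\gamma_m|\gamma_*) + \mathcal{I}(\gamma_*|\gamma_c)$, and one checks $\mathcal{I}(\gamma_m|\gamma_c) \to \mathcal{I}(\gamma_*|\gamma_c)$ using the uniform convergence $u_m+v_m \to u_\infty + v_\infty$.

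The routine points — continuity of $T_\mu$, passage from uniform density convergence to $L^1$ convergence, and the bookkeeping of additive constants between $\mathcal{F}$ and the entropy functional — are all straightforward given compactness of $X\times Y$ and continuity of $c$. The one place requiring a little care, which I would treat as the main obstacle, is justifying that the minimizer of $\mathcal{I}(\cdot|\gamma_c)$ over $\Pi(\mu,\nu)$ is attained and has finite value at all (so that the Pythagorean decomposition is not vacuous): here one uses that $\gamma_{u_*}$, with $u_*$ the fixed point furnished by Proposition \ref{prop:exist and uniq of fix point}, is an explicit competitor with finite divergence (its density $e^{-(u_*+v[u_*])}$ is bounded above and below since $u_*, v[u_*]$ are continuous on compact spaces), and then lower semicontinuity of $\mathcal{I}(\cdot|\gamma_c)$ on the weakly compact set $\Pi(\mu,\nu)$ gives existence; strict convexity then pins down uniqueness and forces the minimizer to coincide with $\gamma_{u_*}$.
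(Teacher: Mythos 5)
Your argument is correct and is essentially the paper's own proof: the paper compresses your Pythagorean decomposition into the phrase ``a standard calculus argument'' (strict convexity of $\mathcal{I}$ on the affine set $\Pi(\mu,\nu)$), identifies $\gamma_{*}=\gamma_{u_{*}}$ via Lemma \ref{lem:-u fixed iff gamma u is in Pi} and Proposition \ref{prop:exist and uniq of fix point}, and obtains the convergence of $\gamma_{m}$ from Theorem \ref{thm:conv of u m in general setting}, exactly as you do. The only blemish is a sign slip in your displayed identity (when $e^{-\Phi}\gamma_{c}\in\Pi(\mu,\nu)$ one has $\mathcal{I}(\gamma|\gamma_{c})=\mathcal{I}\left(\gamma|e^{-\Phi}\gamma_{c}\right)-\int\Phi\,\gamma$, with $\log\int e^{-\Phi}\gamma_{c}=0$), which is harmless because all that your argument really uses is that these extra terms are constant on $\Pi(\mu,\nu)$ --- and that is correct.
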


\begin{proof}
By construction $\gamma_{*}:=\gamma_{u_{*}}$ has the property that
\[
\gamma_{_{*}}=e^{-\Phi}\gamma_{c},\,\,\,\gamma_{_{*}}\in\Pi(\mu,\nu)
\]
for some $\Phi\in L^{\infty}(X)+L^{\infty}(Y).$ But a standard calculus
argument reveals that any such $\gamma_{*}$ is the unique minimizer
of the restriction of $\mathcal{I}$ to the affine subspace $\Pi(\mu,\nu)$
of $\mathcal{P}(X\times Y$) (using that $\mathcal{I}$ is strictly
convex). The last convergence statement then follows directly from
Theorem \ref{thm:conv of u m in general setting} (only the easier
convergence in $C(X)/\R$ is needed). 
\end{proof}
Rewriting
\[
k^{-1}\mathcal{I}(\gamma|\gamma_{kc})=\int c\gamma+k^{-1}\mathcal{I}(\gamma|\mu\otimes\nu),
\]
 the equality \ref{eq:inf of I is inf of F} can be viewed as an entropic
variant of Kantorovich duality \ref{eq:Kant duality} in the limit
when $c$ is replaced by $kc$ for a large positive number $k.$ In
fact, it follows from Theorem \ref{Thm:weak conv of fixed points towards optimal transport plans}
applied to $\mu_{k}=\mu$ and $\nu_{k}=\nu$ that
\[
\lim_{k\rightarrow\infty}\inf_{\gamma\in\Pi(\mu,\nu)}k^{-1}\mathcal{I}(\gamma|\gamma_{kc})=\inf_{\gamma\in\Pi(\mu,\nu)}\int c\gamma=\sup_{\Phi_{c}}\int u\mu+\int v\nu,
\]
 as in the Kantorovich duality \ref{eq:Kant duality}. In the next
section we will consider the setting where $\mu$ and $\nu$ also
change with $k.$

\subsection{\label{subsec:The-parametrized-setting}The scaled setting and discretization}

Let us next consider the following variant of the previous setting,
parametrized by a parameter $k$ (which is the parameter that will
later on tend to infinity and which corresponds to the entropic regularization
parameter $\epsilon:=k^{-1}$). This means that we replace the triple
$(\mu,\nu,c)$ with a sequence $(\mu^{(k)},\nu^{(k)},kc).$ As explained
in Section \ref{subsec:Discrete-optimal-transport} replacing $c$
with $kc$ corresponds to introducing the entropic regularization
parameter $\epsilon=k^{-1}.$ We then rescale the functions in $C(X)$
and $C(Y)$ by $k$ and consider the corresponding rescaled operators:

\[
v^{(k)}[u]:=k^{-1}\log\int e^{-kc(x,\cdot)-ku(x)}\mu^{(k)}(x)
\]
\[
u^{(k)}[v]:=k^{-1}\log\int e^{-kc(\cdot,y)-kv(y)}\nu^{(k)}(y)
\]
\begin{equation}
S^{(k)}(u):=k^{-1}S(ku)\label{eq:scaled log Sinkhor op}
\end{equation}
etc. The corresponding rescaled iteration is thus defined by the iteration
\begin{equation}
u_{m+1}^{(k)}:=S^{(k)}u_{m}^{(k)}\in C(X),\label{eq:scaled iteration}
\end{equation}
given the initial value $u_{0}^{(k)}\in C(X).$ It will be called
the \emph{scaled log Sinkhorn iteration (at level $k).$ }Equivalently,
\begin{equation}
u_{m+1}^{(k)}-u_{m}^{(k)}=k^{-1}\log(\rho_{ku_{m}^{(k)}}),\label{eq:difference equation}
\end{equation}
 where 
\begin{equation}
\rho_{ku}(x):=\frac{e^{ku^{(k)}\left[v^{(k)}[u]\right](x)}}{e^{ku}}(x\text{)},\label{eq:explicit expres for pho}
\end{equation}
which can be explicitly expressed as 
\[
\rho_{ku}(x)=\int_{Y}\frac{e^{-kc(x,y)-ku(x)}}{\int_{X}e^{-kc(x',y)-ku(x')}\mu^{(k)}(x')}\nu^{(k)}(y)
\]
We also set 
\begin{equation}
\mathcal{F}^{(k)}(u):=k^{-1}\mathcal{F}(ku)=\int u\mu+\int v^{(k)}[u]\nu.\label{eq:def of F k}
\end{equation}
By Theorem \ref{thm:conv of u m in general setting} (applied to a
fixed $k)$ as $m\rightarrow\infty$ the iteration $u_{m}^{(k)}$
converges in $C(X)$ to a fixed point $u^{(k)}$ of the operator $S^{(k)}$
(uniquely determined by the initial value $u_{0}^{(k)})).$ 

We observe that the following compactness property holds (and is proved
exactly as in Lemma \ref{lem:compactness of function spaces}): 
\begin{lem}
\label{lem:compactness of function with k}The union $\bigcup_{k\geq0}S^{(k)}$
is relatively compact in $C(X)/\R$ (identifying $C(X)/\R$ with the
set of all continuous functions vanishing at a given point $x_{0})$
\end{lem}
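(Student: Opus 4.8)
The statement to prove is Lemma~\ref{lem:compactness of function with k}: that $\bigcup_{k\geq 0} S^{(k)}(C(X))$ is relatively compact in $C(X)/\R$, when $C(X)/\R$ is identified with the continuous functions vanishing at a fixed base point $x_0$. The text itself tells us this ``is proved exactly as in Lemma~\ref{lem:compactness of function spaces}'', so the plan is to rerun the Arzel\`a--Ascoli argument, but now being careful that the equicontinuity and boundedness estimates are \emph{uniform in $k$}. The key point is that the rescaling $S^{(k)}(u) = k^{-1} S(ku)$ is precisely designed to cancel the factor $k$ appearing in $kc$, so that the modulus of continuity of any function in the image does not degenerate as $k\to\infty$.

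First I would write out, for $u\in C(X)$ and $v = v^{(k)}[u]$, the explicit formula
\[
S^{(k)}(u)(x) = k^{-1}\log\int_Y e^{-kc(x,y)-kv(y)}\,\nu^{(k)}(y).
\]
Now compare the value at two points $x_1,x_2\in X$. Writing $w=S^{(k)}(u)$, the difference $w(x_1)-w(x_2)$ is $k^{-1}\log$ of a ratio of two Gaussian-type integrals with the same measure $e^{-kv(y)}\nu^{(k)}(y)$, and in that ratio the integrand of the numerator is the integrand of the denominator multiplied by $e^{-k(c(x_1,y)-c(x_2,y))}$. Hence pointwise $e^{-k\sup_y|c(x_1,y)-c(x_2,y)|} \le e^{k(w(x_1)-w(x_2))} \le e^{k\sup_y|c(x_1,y)-c(x_2,y)|}$, which gives
\[
|w(x_1)-w(x_2)| \le \sup_{y\in Y}|c(x_1,y)-c(x_2,y)|.
\]
The crucial feature is that the factor $k$ has completely disappeared: the bound is the modulus of continuity of $c$ in its first variable, which by compactness of $X\times Y$ is uniformly continuous (exactly as invoked in Lemma~\ref{lem:compactness of function spaces}). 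So $\bigcup_{k}S^{(k)}(C(X))$ is an equicontinuous family with a modulus of continuity independent of $k$. Since every element can be normalized to vanish at $x_0$ without changing its place in $C(X)/\R$, the family is then also uniformly bounded (equicontinuity plus a common value at one point plus compactness of $X$), and Arzel\`a--Ascoli yields relative compactness in $C(X)$, hence in $C(X)/\R$.

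I should note that the same argument also handles the dependence on $u$: the estimate above is uniform over \emph{all} $u\in C(X)$ because the offending quantity $v^{(k)}[u]$ enters only through the common reference measure $e^{-kv(y)}\nu^{(k)}(y)$, which cancels in the ratio. So there is really nothing to track beyond the uniform continuity modulus of $c$. The only mild subtlety --- and the one place I would be slightly careful --- is the case $k=0$ (or $k\to 0$): at $k=0$ the operator $S^{(0)}$ degenerates, but the estimate $|w(x_1)-w(x_2)|\le \sup_y|c(x_1,y)-c(x_2,y)|$ is scale-free and holds for every $k>0$, and the union over $k\ge 0$ is harmless since the $k=0$ term contributes at most the constants. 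I do not anticipate a genuine obstacle here; the content is entirely that the rescaling is the correct one to make the $k$-family equicontinuous, and the proof is a one-paragraph repetition of Lemma~\ref{lem:compactness of function spaces} with the bound written so that the $k$-dependence is manifestly absent.
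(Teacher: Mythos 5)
Your proof is correct and is essentially the paper's own argument: the paper simply asserts the lemma ``is proved exactly as in Lemma \ref{lem:compactness of function spaces}'', and your explicit ratio estimate $|w(x_1)-w(x_2)|\leq\sup_{y}|c(x_1,y)-c(x_2,y)|$ is precisely the verification that the $k^{-1}$-rescaling cancels the factor $k$ in $kc$, yielding a $k$-independent modulus of continuity so that Arzel\`a--Ascoli applies uniformly over $k$.
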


\subsubsection{\label{subsec:Discretization-and-the}Discretization and the Sinkhorn
algorithm}

Now assume that $\mu^{(k)}$ and $\nu^{(k)}$ are discrete probability
measures whose supports are finite sets 
\[
X^{(k)}:=\{x_{i}^{(k)}\}_{i=1}^{N_{k}},\,\,\,Y^{(k)}:=\{y_{i}^{(k)}\}_{i=1}^{N_{k}}
\]
 of the same number $N_{k}$ of points in $X$ and $Y,$ respectively.
This means that there exist vectors $p^{(k)}$ and $q^{(k)}$ in $\R^{N_{k}}$
such that
\[
\mu^{(k)}=\sum_{i=1}^{N_{k}}\delta_{x_{i}^{(k)}}p_{i}^{(k)},\,\,\,\nu^{(k)}=\sum_{i=1}^{N_{k}}\delta_{x_{i}^{(k)}}q_{i}^{(k)}.
\]
 Moreover, since $\mu^{(k)}$ and $\nu^{(k)}$ are probability measures
the vectors $p^{(k)}$ and $q^{(k)}$ are elements in the simplex
$\Sigma_{N_{k}}$ in $\R^{N_{k}}$ defined by
\begin{equation}
\Sigma_{N}:=\left\{ v\in\R^{N}:v_{i}\geq0,\,\,\,\sum_{i=1}^{N}v_{i}=1\right\} ,\label{eq:def of simplex}
\end{equation}
 which we identify with $\mathcal{P}(\{1,...,N\}).$ Similarly, we
identify the discrete measure 
\[
\gamma_{c}^{(k)}:=e^{-kc}\mu^{(k)}\otimes\nu^{(k)}
\]
on $X\times Y$ with the matrix $\tilde{K}\in\R^{N_{k}}\times\R^{N_{k}}$
defined by 

\[
\tilde{K}_{ij}:=K_{ij}^{(k)}p_{i}^{(k)}q_{j}^{(k)},\,\,\,K_{ij}^{(k)}:=\exp(-kC_{ij}),\,\,\,C_{ij}:=c(x_{i}^{(k)},y_{j}^{(k)}),
\]
 where $C_{ij}$ is viewed as a cost function on $\{1,...,N\}^{2}.$
Under the identifications 
\[
C(X^{(k)})\leftrightarrow\R_{+}^{N_{k}},\,\,\,u\mapsto a,\,\,\,a_{i}:=e^{-ku(x_{i}^{(k)})}p_{i}^{(k)}
\]
and 
\[
C(Y^{(k)})\leftrightarrow\R_{+}^{N_{k}},\,\,\,v\mapsto b,\,\,\,b_{i}:=e^{-kv(y_{j}^{(k)})}q_{i}^{(k)}
\]
the scaled iteration \ref{eq:scaled iteration} gets identified with
the recursion $a^{(k)}(m)$ defined by the Sinkhorn algorithm determined
by the matrix $K^{(k)}$ and the positive vectors $p^{(k)}$ and $q^{(k)}$
(see Section \ref{subsec:The-Sinkhorn-algorithm}). Given an initial
positive vector $a^{(k)}(0)$ Theorem \ref{thm:conv of u m in general setting}
thus shows that $(a^{(k)}(m),b^{(k)}(m))$ converges, as $m\rightarrow\infty,$
to a pair of positive vectors $(a^{(k)},b^{(k)})$ such that the scaled
matrix $D_{b}K^{(k)}D_{a}$ has the property that the rows sum to
$p^{(k)}$ and the columns sum to $q^{(k)}.$
\begin{rem}
\label{rem:Fourier}By construction, the functions $u_{m}^{(k)}(x)$
on $X$ can be expressed in terms of a Fourier type sum: 
\[
u_{m}^{(k)}(x)=k^{-1}\log\sum_{i=1}^{N_{k}}e^{-kc\left(x,y_{i}^{(k)}\right)}b_{i}^{(k)}(m-1)
\]
 where the ``Fourier coefficients'' $b_{i}^{(k)}(m-1)$ are given
by the Sinkhorn algorithm. In the case when $X$ and $Y$ are domains
in $\R^{n},$ with $c(x,y)=-x\cdot y,$ this is the analytic continuation
to $i\R^{n}$ of a bona fide Fourier sum with Fourier coefficients
in $k$ times the support of $\nu^{(k)}.$ Hence, $k$ plays the role
of the ``band-width''. 
\end{rem}

\section{\label{sec:Proofs-of-the}Convergence of the  fixed points }

In this section we will prove various generalizations of Theorem \ref{thm:conv in static torus setting intr},
stated in the introduction. Throughout the section we will consider
the parametrized setting in Section \ref{subsec:The-parametrized-setting}
and assume that the sequences $\mu^{(k)}$ and $\nu^{(k)}$ converge
to $\mu$ and $\nu$ in $\mathcal{P}(X)$ and $\mathcal{P}(Y),$ respectively
(in the standard weak topology). We will denote by $u^{(k)}$ the
fixed point of the corresponding operator $S^{(k)}$ on $C(X),$ uniquely
determined by the normalization condition $u^{(k)}(x_{0})=0,$ at
a given point $x_{0}$ in $X$ and set $v^{(k)}:=T_{\mu}u^{(k)}=:v^{(k)}[u^{(k}],$
which is a fixed point of the corresponding operator $S^{(k)}$ on
$C(Y).$ 

\subsection{A general convergence result for the fixed points}

We start by giving a density condition on $\mu^{(k)}$ ensuring that
$v^{(k)}[u]$ converges uniformly to the $c-$Legendre transform $u^{c}$
of $u,$ when $\mu$ has full support:
\begin{lem}
\label{lem:dens prop}Assume that the sequence $\mu^{(k)}$ converging
to $\mu$ in $\mathcal{P}(X)$ has the following ``density property'':
for any given open subset $U$ intersecting the support $X_{\mu}$
of $\mu$ 
\begin{equation}
\liminf_{k\rightarrow\infty}k^{-1}\log\mu^{(k)}(U)\geq0\label{eq:density property}
\end{equation}
Then, for any given $u\in C(X),$ the sequence $v^{(k)}[u]$ converges
uniformly to $(\chi_{X_{\mu}}+u)^{c}$ in $C(Y).$ 
\end{lem}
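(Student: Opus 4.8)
The plan is to show that $v^{(k)}[u](y) = k^{-1}\log\int_X e^{-k(c(x,y)+u(x))}\mu^{(k)}(x)$ converges uniformly in $y$ to $\sup_{x\in X_\mu}\bigl(-c(x,y)-u(x)\bigr)$, which is exactly $(\chi_{X_\mu}u)^{*,c}(y)$ since adding the indicator $\chi_{X_\mu}$ restricts the supremum to the support of $\mu$. This is a Laplace/Varadhan-type asymptotic: the integral is dominated by the region where the exponent $-(c(x,y)+u(x))$ is maximal, and the density property (\ref{eq:density property}) is precisely what guarantees that $\mu^{(k)}$ puts enough mass near the maximizer to see it. First I would fix $y$ and write $\Psi_y(x):=c(x,y)+u(x)$, so that $v^{(k)}[u](y)=k^{-1}\log\int_X e^{-k\Psi_y(x)}\mu^{(k)}(x)$ and the claimed limit is $-\min_{x\in X_\mu}\Psi_y(x)=:-M_y$.

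\textbf{Upper bound.} For the inequality $\limsup_k v^{(k)}[u](y)\le -M_y$, I would use the trivial pointwise bound $\Psi_y\ge M_y$ on $X_\mu$, hence $e^{-k\Psi_y(x)}\le e^{-kM_y}$ on $X_\mu$; since $\mu^{(k)}$ has total mass $1$ and (for $k$ large, using $\mu^{(k)}\to\mu$) essentially all its mass near $X_\mu$, we get $\int e^{-k\Psi_y}\,\mu^{(k)}\le e^{-kM_y}(1+o(1))$ once we control the contribution from outside a neighborhood of $X_\mu$ using boundedness of $\Psi_y$ (here continuity of $c$ and $u$ on the compact $X\times Y$ gives uniform bounds). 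Taking $k^{-1}\log$ yields the bound, and the estimates are uniform in $y$ because $\Psi_y$ and $M_y$ depend continuously on $y$ on a compact space, so the $o(1)$ terms can be taken uniform.

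\textbf{Lower bound.} For $\liminf_k v^{(k)}[u](y)\ge -M_y$, pick $x_y\in X_\mu$ with $\Psi_y(x_y)=M_y$ and, given $\delta>0$, use uniform continuity of $c$ and $u$ to choose an open set $U=U(y,\delta)$ around $x_y$ on which $\Psi_y\le M_y+\delta$. Then $\int e^{-k\Psi_y}\,\mu^{(k)}\ge e^{-k(M_y+\delta)}\mu^{(k)}(U)$, so $v^{(k)}[u](y)\ge -(M_y+\delta)+k^{-1}\log\mu^{(k)}(U)$, and the density property gives $\liminf_k k^{-1}\log\mu^{(k)}(U)\ge 0$. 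Letting $\delta\to 0$ gives the bound pointwise; for uniformity in $y$ the main technical point is to make the choice of neighborhood $U$ and the rate in the density property locally uniform in $y$ --- this is handled by a compactness argument on $Y$ together with a finite subcover, exploiting that the family $\{\Psi_y\}_{y\in Y}$ is uniformly equicontinuous (again since $c$ is uniformly continuous on $X\times Y$).

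\textbf{Main obstacle.} The genuinely delicate step is upgrading pointwise convergence to \emph{uniform} convergence, i.e. controlling the lower-bound argument uniformly in $y$, since the density hypothesis (\ref{eq:density property}) is only assumed for each fixed open set and a priori gives no rate. The resolution is that the limit function $y\mapsto -M_y$ is continuous (as an infimum of the jointly continuous function $\Psi_y(x)$ over the compact $X_\mu$) and the functions $v^{(k)}[u]$ form an equicontinuous family on the compact space $Y$ (directly from uniform continuity of $c$, exactly as in Lemma \ref{lem:compactness of function spaces}); a sequence of equicontinuous functions on a compact metric space that converges pointwise converges uniformly. So once pointwise convergence is established, uniform convergence follows formally, and the proof reduces to the two one-sided pointwise estimates above.
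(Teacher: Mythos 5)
Your overall route is the one the paper itself takes: a Laplace-type upper bound by the supremum of the integrand, a lower bound obtained by localizing to a small neighbourhood of a minimizer of $\Psi_y:=c(\cdot,y)+u$ on $X_\mu$ and invoking the density property \ref{eq:density property}, and an upgrade from pointwise to uniform convergence via equicontinuity of the family $v^{(k)}[u]$ (exactly the mechanism of Lemma \ref{lem:compactness of function spaces}). Your lower bound and your uniformity argument are correct and essentially identical to the paper's.

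The gap is in your upper bound. You want $\limsup_k v^{(k)}[u](y)\le -M_y$ with $M_y:=\min_{X_\mu}\Psi_y$, and you propose to control the contribution from outside a neighbourhood $V$ of $X_\mu$ by combining the weak convergence $\mu^{(k)}\to\mu$ (``essentially all its mass near $X_\mu$'') with boundedness of $\Psi_y$. This does not work: weak convergence only gives $\mu^{(k)}(X\setminus V)=o(1)$, with no rate, while the integrand on $X\setminus V$ can be as large as $e^{-k\inf_{X\setminus V}\Psi_y}$, which is exponentially larger than $e^{-kM_y}$ whenever $\Psi_y$ dips below $M_y$ off the support of $\mu$; a bounded exponent only yields factors like $e^{Ck}$, which swamp any $o(1)$. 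Concretely, take $\mu^{(k)}=(1-k^{-1})\delta_{x_0}+k^{-1}\delta_{x_1}$ with $x_0\in X_\mu=\{x_0\}$, $x_1\notin X_\mu$, and choose $u\in C(X)$ with $\Psi_y(x_1)<\Psi_y(x_0)=M_y$: the density property holds (any open $U$ meeting $X_\mu$ has $\mu^{(k)}(U)\ge 1-k^{-1}$), yet $v^{(k)}[u](y)\ge -\Psi_y(x_1)-k^{-1}\log k\to-\Psi_y(x_1)>-M_y$. So $o(1)$ mass off $X_\mu$ genuinely destroys the claimed inequality, and no argument based only on weak convergence can prove it. The paper's one-line upper bound instead replaces the integral by the sup of the integrand over the support of $\mu^{(k)}$, i.e. it tacitly uses that $\mu^{(k)}$ charges only $X_\mu$; alternatively one can settle for the trivial bound $v^{(k)}[u]\le u^{c}$ (sup over all of $X$), which coincides with $(\chi_{X_\mu}u)^{c}$ in the full-support case $X_\mu=X$ relevant to all the applications in the paper. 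To repair your write-up, either restrict to that case, or add the hypothesis that $\mathrm{supp}\,\mu^{(k)}\subset X_\mu$ (or that the mass of $\mu^{(k)}$ off every neighbourhood of $X_\mu$ is exponentially small in $k$) and take the sup only over that support; the weak-convergence step as stated cannot be salvaged.
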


\begin{proof}
Replacing the integral over $\mu^{(k)}$ with a sup directly gives
\begin{equation}
v^{(k)}[u](y)\leq(\chi_{X_{\mu}}+u)^{c}(y)\label{eq:ineq v k and u c}
\end{equation}
for any $y\in Y.$ To prove a reversed inequality let $x_{y}$ be
a point in $X_{\mu}$ where the sup defining $(\chi_{X_{\mu}}+u)^{c}(y)$
is attained and $U_{\delta}$ a neighborhood of $x_{y}$ where the
oscillation of $c(\cdot,y)+u$ is bounded from above by $\delta$
(the existence of $U_{\delta}$ is ensured by the continuity of $c$
and the compactness of $X$ and $Y).$ Then 
\[
v^{(k)}[u](y)\geq k^{-1}\log\int_{U_{\delta}}e^{-k(c(x,y)+u(x))}\mu^{(k)}(x)\geq k^{-1}\log\mu^{(k)}(U_{\delta})+u^{c}(y)-\delta
\]
Hence, as $k\rightarrow\infty,$ $v^{(k)}[u](y)\rightarrow u^{c}(y)$
and since $v^{(k)}[u]$ is equicontinuous (by the assumed compactness
of $X\times Y$ and the continuity of $c)$ this implies the desired
uniform convergence. 
\end{proof}
\begin{example}
\label{exa:The-density-property}(Weighted point clouds). If $\mu_{k}=\mu$
for any $k$ then the density property is trivially satisfied. More
generally, the density property \ref{eq:density property} is satisfied
by any reasonable approximation $\mu^{(k)}.$ For example, in the
discrete case where $\mu^{(k)}=\sum_{i=1}^{N_{k}}w_{i}^{(k)}\delta_{x_{i}^{(k)}}$
the property in question holds if $\sup_{i}1/w_{i}^{(k)}$ and the
inverse of the number of points $x_{i}^{(k)}$ in any given open set
$U$ intersecting $X_{\mu}$ have sub-exponential growth in $k.$ 
\end{example}

\begin{thm}
\label{Thm:weak conv of fixed points towards optimal transport plans}Suppose
that $\mu^{(k)}\rightarrow\mu$ and $\nu^{(k)}\rightarrow\mu$ in
$\mathcal{P}(X)$ and $\mathcal{P}(Y),$ respectively and assume that
$\mu^{(k)}$ and $\nu^{(k)}$ satisfy the density property \ref{eq:density property}.
Let $u^{(k)}$ be the normalized fixed point for the scaled log Sinkhorn
operator $S^{(k)}$ on $C(X).$ Then, after perhaps passing to a subsequence,
the following holds:
\[
u^{(k)}\rightarrow u
\]
uniformly on $X,$ where $u$ is a $c-$convex minimizer of the Kantorovich
functional $J$ (formula \ref{eq:Kant functional}) satisfying
\begin{equation}
u=(\chi_{Y_{\nu}}+(\chi_{X_{\mu}}+u)^{c})^{c}\label{eq:u in theorem as a Legendre transform twice}
\end{equation}
As a consequence, the corresponding probability measures
\[
\gamma^{(k)}:=e^{-k(u^{(k)}+v^{(k)})}e^{-kc}\mu^{(k)}\otimes\nu^{(k)}\in\mathcal{P}(X\times Y)
\]
converge weakly to a transport plan $\gamma$ between $\mu$ and $\nu,$
which is optimal wrt the cost function $c.$ 
\end{thm}

\begin{proof}
\emph{Step 1: Convergence of a subsequence of $u^{(k)}$ }

In the following all functions will be normalized by demanding that
the values vanish at a given point. By Lemma \ref{lem:compactness of function with k}
we may, after perhaps passing to a subsequence, assume that $u^{(k)}\rightarrow u^{(\infty)}$
uniformly on $X,$ for some element $u^{(\infty)}$ in $C(X).$ By
the previous lemma, for any given $u\in C(X)$ we have
\begin{equation}
\mathcal{F}^{(k)}(u)=J(\chi_{X_{\mu}}+u)+o(1),\label{eq:F k asympt to J}
\end{equation}
 where $\mathcal{F}^{(k)}$ is defined by formula \ref{eq:def of F k}
and $J$ denotes the Kantorovich funtional, defined by formula \ref{eq:Kant functional}.
Now take a sequence $\epsilon_{k}$ of positive numbers tending to
zero such that
\begin{equation}
u^{(\infty)}-\epsilon_{k}\leq u^{(k)}\leq u^{(\infty)}+\epsilon_{k}\label{eq:u k compard to u infy}
\end{equation}
Since $u\mapsto v^{(k)}[u]$ is decreasing it follows that 

\begin{equation}
\mathcal{F}^{(k)}(u^{(\infty)})-2\epsilon_{k}\leq\mathcal{F}^{(k)}(u^{(k)})+2\epsilon_{k}\label{eq:ineq for F k}
\end{equation}
Next note that since $u^{(k)}$ minimizes the functional $\mathcal{F}^{(k)}$
(by Prop \ref{prop:exist and uniq of fix point}) we have $\mathcal{F}^{(k)}(u^{(k)})\leq\mathcal{F}^{(k)}(u)$
for any given $u$ in $C(X).$ Hence, combining \ref{eq:ineq for F k}
and \ref{eq:F k asympt to J} and letting $k\rightarrow\infty$ gives

\[
J(u^{(\infty)})\leq\inf_{u\in C(X)}J(u),
\]
 showing that $u^{(\infty)}$ minimizes $J$ on $C(X).$ To see that
$u^{(\infty)}$ is $c-$convex first recall that, by definition, $u^{(k)}$
satisfies 
\[
u^{(k)}=u^{(k)}[v^{(k)}[u^{(k)}]].
\]
Hence, combing \ref{eq:u k compard to u infy} with the previous lemma,
applied twice, gives

\[
u^{(k)}=u^{(k)}[(\chi_{X_{\mu}}+u^{(\infty)})^{c}]+o(1)=((\chi_{Y_{\mu}}+(\chi_{X_{\mu}}+u^{(\infty)})^{c})^{c}+o(1)
\]
 This shows that $u^{(\infty)}=((\chi_{Y_{\mu}}+(\chi_{X_{\mu}}+u^{(\infty)})^{c})^{c},$
proving that $u^{(\infty)}=f^{c}$ for some $f\in C(Y).$ Hence $u^{(\infty)}$
is $c-$convex. 

\emph{Step 2: Convergence of $\gamma^{(k)}$ (for the subsequence
in Step 1) towards an optimizer}

By Lemma \ref{lem:-u fixed iff gamma u is in Pi} \emph{$\gamma^{(k)}$
is in $\Pi(\mu,\nu).$ }Hence, by weak compactness, we may assume
that \emph{$\gamma^{(k)}$ }converges towards an element $\gamma^{(\infty)}$
in $\mathcal{P}(X\times Y).$ By Prop \ref{prop:optim crit} it will
thus be enough to show that $\gamma^{(\infty)}$ is supported in $\Gamma_{u^{(\infty)}}.$
To this end let $\Gamma_{\delta}$ be the closed subset of $X\times Y$
where $u+u^{c}+c\geq\delta>0$ for $u:=u^{(\infty)}.$ By the previous
lemma $\gamma^{(k)}\leq e^{-k\delta/2}\mu^{(k)}\otimes\nu^{(k)}$
on $\Gamma_{\delta},$ when $k$ is sufficiently large and hence the
limit $\gamma^{(\infty)}$ is indeed supported on $\Gamma_{u^{(\infty)}}.$ 
\end{proof}
In order to ensure that the whole sequence $u^{(k)}$ is convergent
some conditions on the cost function $c$ and the measures $\mu$
and $\nu$ need to be imposed. Exploiting well-known uniqueness result
for optimal transport plans/maps this can, in particular, be achieved
in the following Riemannian setting. 
\begin{thm}
\label{thm:Static conv in Riem setting}Let $M$ be a Riemannian manifold
and denote by $d$ the Riemannian distance function. Let $X$ and
$Y$ be compact subsets of $M$ such that $Y$ is a topological domain,
i.e. $Y$ is equal to the closure of the interior of $Y$ and take
$c(x,y)$ to be the restriction of $d(x,y)^{2}/2$ to $X\times Y.$
Assume that $\nu$ is absolutely continuous wrt the Riemannian volume
form and has support $Y$ and that $\mu^{(k)}\rightarrow\mu$ and
$\nu^{(k)}\rightarrow\nu$ in $\mathcal{P}(X).$ Denote by $u^{(k)}$
the normalized fixed point of the scaled log Sinkhorn operator $S^{(k)}$
on $C(X).$ Then
\begin{itemize}
\item $v^{(k)}$ converges uniformly in $Y$ to a $c-$convex function $v,$
which is a potential for the unique optimal Borel map transporting
$\nu$ to $\mu,$ i.e. the map that can be expressed as 
\begin{equation}
y\mapsto x_{y}:=\text{exp}_{y}(\nabla v),\label{eq:y mapsto x as exp}
\end{equation}
(which means that $x_{x}$ is obtained by transporting $y$ along
a unit-length geodesic in the direction of $(\nabla v)(y)).$
\item $u^{(k)}$ converges uniformly on $X$ towards the $c-$convex function
$u$ given by the $c-$Legendre transform $v^{c}$ of $v.$ Moreover,
$u$ and $v$ satisfy
\[
u=(u+\chi_{X_{\nu}})^{cc},\,\,\,v=(u+\chi_{X_{\nu}})^{c}=u^{c}=v
\]
 
\item If $\mu$ is absolutely continuous wrt the Riemannian volume form,
then $x\mapsto\exp_{y}(\nabla u)$ defines the optimal transport of
$\mu$ to $\nu.$
\end{itemize}
\end{thm}

\begin{proof}
This will be shown to follow from the previous theorem combined with
results in \cite{v2}, generalizing Brenier's theorem in $\R^{n}$
\cite{br} and its Riemannian version in \cite{mc} (and, in particular,
\cite{c-e}, concerning the torus case). After passing to a subsequence,
as in the previous theorem, we may assume that $u^{(k)}\rightarrow u:=u^{(\infty)}$
and that $v^{(k)}\rightarrow v:=(\chi_{X_{\mu}}+u)^{c}.$ In particular,
$v$ is $c-$convex. Denote by $\gamma$ the corresponding optimal
transport plan, furnished by the previous theorem, which is supported
in the subset of $X\times Y$ where $u+v+c=0.$ Hence, it follows
from \cite[Thm 10.41]{v2} (and its proof) that the Borel map $y\mapsto\text{exp}_{y}(\nabla v)$
is the unique optimal transport (Borel) map, pushing forward $\nu$
to $\mu.$ Since $Y$ is assumed to be a topological domain it follows
that $v$ is uniquely determined on $Y,$ modulo additive constants
(see \cite[Remark 10.30]{v2}). Now, by formula \ref{eq:u in theorem as a Legendre transform twice}
we have $u=v^{c}$ and since $u(x_{0})=0$ it follows that $u$ is
uniquely determined. But, as shown in the proof of the previous theorem,
we have $v=(u+\chi_{X_{\nu}})^{c}$ and hence $v$ is also uniquely
determined (i.e. not only determined modulo an additive constant).
Next, since, by assumption, $Y=Y_{\nu}$ formula \ref{eq:u in theorem as a Legendre transform twice}
says that $(u+\chi_{X_{\nu}})^{cc}=u.$ In general, $w^{ccc}=w^{c}$
for any function $w$ and hence it follows that $(u+\chi_{X_{\nu}})^{c}=u^{c}$
which shows that $v=u^{c}.$ 
\end{proof}
The previous theorem applies more generally as soon as a unique Borel
optimal map exists (see for example \cite[Thm 10.38]{v2} for conditions
on $c$ ensuring that this is the case). 

\subsubsection{The torus case: proof of Theorem \ref{thm:conv in static torus setting intr} }

First assume only that the probability measure $\nu$ is absolutely
continuous wrt Lebesgue measure. By the previous theorem $u^{(k)}$
then converges uniformly towards a $c-$convex function $u$ such
that 
\[
(\nabla u^{c}+I)_{*}\nu=\mu
\]
If $\mu$ moreover and $\nu$ have densities $e^{-g}$ and $e^{-g}$
which are Hölder continuous, then it it is well-known that there exists
a unique optimal transport map and its potential (which is uniquely
determined up to a constant) is in $C^{2,\alpha}(T)$ for some $\alpha>0$
(see \cite{c-e} where this is deduced from the regularity results
of Caffarelli $\R^{n}$). It follows that $u^{c}$ and hence also
$u$ is $C^{2}-$smooth and strictly quasi-convex and solves the Monge-Ampère
equation \ref{eq:MA eq intro}. 

\subsection{\label{subsec:The-non-compact-setting}Application to the second
boundary value problem for the Monge-Ampère operator in $\R^{n}$}

Now consider the Euclidean case of Theorem \ref{thm:Static conv in Riem setting},
i.e. the case where $M=\R^{n}$ and $d(x,y)=|x-y|$ and assume that
$X$ and $Y$ are compact convex domains (and, in particular, topological
domains) and take $x_{0}=0.$ As before we also assume that the support
of $\nu$ is equal to $Y$ and that $\nu$ is absolutely continuous
wrt $dx$ and fix discretizations $\mu^{(k)}$ and $\nu^{(k)}$ satisfying
the density property \ref{eq:density property}. 

In order to conform to classical notation in $\R^{n}$ we set $\phi^{(k)}(x):=u^{(k)}(x)+|x|^{2}/2$
and $\psi^{(k)}=v^{(k)}(y)+|y|^{2}$ etc. This corresponds to replacing
the cost function $d^{2}/2$ with 
\[
c(x,y):=-x\cdot y.
\]
 We will use the classical notation $\phi^{*}$ for the corresponding
Legendre transform (formula \ref{eq:def of classical Leg tr}). Then,
by Theorem \ref{thm:Static conv in Riem setting} , $\phi^{(k)}$
converges uniformly to a convex function $\phi$ on $X$ and 
\begin{equation}
(\nabla\psi)_{*}\nu=\mu,\,\,\,\psi:=(\chi_{X}+\phi)^{*}\label{eq:Euclidean setting push for rel}
\end{equation}
We next observe that this means that $\phi$ satisfies the following
Monge-Ampère equation on $\Omega$
\begin{equation}
MA_{\nu}(\phi)=\mu,\label{eq:MA eq in Eucl setting}
\end{equation}
 where $MA_{\nu}(\phi)$ denote the Monge-Ampère measure of $\phi$
relative the target measure $\nu,$ in the sense of Alexandrov. We
recall that if $\nu$ is a given probability measure on $\R^{n}$
which is absolutely continuous wrt $dx$ and $\phi$ is a finite convex
function on a convex open set $\Omega$ in $\R^{n},$ then $MA_{\nu}(\phi)$
is the Borel measure on $\Omega$ defined by
\[
\int_{E}MA_{\nu}(\phi)=\int_{(\partial\phi)(E)}\nu,
\]
 where $E$ is a given Borel set in $\Omega$ and $(\partial\phi)(E)$
denotes the image of $E$ under the multivalued sub-gradient map $\partial\phi$
\cite{v1}. As is well-known, the assumption that $\nu$ is absolutely
continuous wrt $dx$ ensures that $MA_{\nu}(\phi),$ as defined above,
is indeed a measure on $\Omega,$ i.e. countably additive (see \cite[Section 3]{m-t-w}
for a more general setting involving a cost function $c).$ Moreover,
if $\phi$ is $C^{2}-$smooth and strictly convex, then making the
change of variables $y=\nabla\phi(x)$ reveals that
\begin{equation}
MA_{\nu}(\phi)=\rho_{\nu}(\nabla\phi)\det(\nabla^{2}\phi),\label{eq:MA measure in Eucl setting in terms of density}
\end{equation}
 where$\rho_{\nu}$ denotes the density of $\nu$ wrt $dx.$ We will
use the following general representation of the the Monge-Ampère measure
(see \cite[Section 2.2]{ber2} and references therein). 
\begin{lem}
Let $\phi$ be a bounded finite convex function in $\Omega.$ Then
\[
MA_{\nu}(\phi)=(\nabla\psi)_{*}\nu,\,\,\,\psi:=(\chi_{X}+\phi)^{*}
\]
Hence, by formula \ref{eq:Euclidean setting push for rel}, the limit
$\phi$ of $\phi^{(k)}$ indeed solves the Monge-Ampère equation \ref{eq:MA eq in Eucl setting}.
It should be stressed that, in general, there can be many different
convex solutions to the Monge-Ampère equation (which not only differ
by an additive constant). But the point is that the Sinkhorn iteration
singles out a particular solution $\phi.$ On the other hand, when
$Y$ is convex we have the following well-known uniqueness result:

Let $\Omega$ be a convex open set in $\R^{n}$ and $\mu$ and $\nu$
probability measures on $\R^{n}$ such that $\mu$ is supported in
$\Omega,$ $\nu$ is absolutely continuous wrt $dx$ and the support
$Y$ of $\nu$ is a convex body, i.e. a compact convex set with non-empty
interior. Then a solution $\phi$ to the \emph{second boundary value
problem}
\begin{equation}
MA_{\nu}(\phi)=\mu,\,\,\,(\partial\phi)(\Omega)\subset Y\label{eq:secon bound value pr}
\end{equation}
 is uniquely determined up to an additive constant. 
\end{lem}

\begin{proof}
A simple proof is given in \cite{ber2}, which we recall here since
it fits well with the spirit of the present paper (see also \cite[Thm 3.1]{m-t-w}
for a different proof in the setting of more general cost function).
The starting point is the observation that any convex function $\phi$
on an open convex set $\Omega$ with the property that $(\partial\phi)(\Omega)\subset Y,$
for a convex body $Y,$ may be expressed as $\phi=\left(\chi_{Y}+(\chi_{X}+\phi)^{*}\right).$
But, by the previous lemma, the equation $MA_{\nu}(\phi)=\mu$ implies
that the function $\psi:=(\chi_{X}+\phi)^{*}$ in $Y$ is uniquely
determined up to an additive constant. Hence, so is $\phi.$ 
\end{proof}
We thus arrive at the following result, which also applies to the
second boundary value problem in all of $\R^{n}:$
\begin{thm}
\label{thm:non-cpt static}Let $X$ and $Y$ be compact convex domains
in $\R^{n}$ endowed with probability measure $\mu$ and $\nu$ respectively.
Assume that the support of $\nu$ is equal to $Y$ and that $\nu$
is absolutely continuous wrt $dx.$ Let $\mu^{(k)}$ and $\nu^{(k)}$
be sequences of probability measures on $X$ and $Y$ converging weakly
towards $\mu$ and $\nu$ respectively and satisfying the density
property \ref{eq:density property}. Denote by $\phi^{(k)}$ the unique
normalized fixed point of the scaled log Sinkhorn operator on $C(X)$
corresponding to the cost function $c(x,y)=-x\cdot y.$ Then 
\begin{equation}
\phi^{(k)}\rightarrow\phi,\,\,\,k\rightarrow\infty\label{eq:conv in thm non cpt}
\end{equation}
uniformly on $X,$ where $\phi$ is the unique normalized convex solution
to the second boundary value problem \ref{eq:secon bound value pr}
in the interior of $X.$ More generally, the corresponding result
holds when $X$ is replaced by $\R^{n}$ under the assumption that
there exists a compact subset containing the support of $\mu^{(k)}$
for all $k.$ Then the corresponding convergence \ref{eq:conv in thm non cpt}
is uniform on compact subsets of $\R^{n}.$
\end{thm}

\begin{proof}
First assume that $X$ is compact. As explained above it then follows
from Theorem \ref{thm:Static conv in Riem setting} that $\phi^{(k)}$
converges uniformly to a normalized convex function $\phi$ on $X,$
satisfying $MA_{\nu}(\phi)=\mu.$ Next, note that, it follows directly
from the definition of the fixed point $\phi^{(k)}$ that $\phi^{(k)}$
may be expressed as
\[
\phi^{(k)}(x)=k^{-1}\log\int_{Y}e^{kx\cdot y}\nu'(y)
\]
 for a measure $\nu'$ supported on $Y.$ As a consequence, $\phi^{(k)}$
is smooth and $\nabla\phi^{(k)}$ is contained in the convex hull
of $Y,$ i.e. in $Y,$ since $Y$ is assumed to be convex. But then
it follows that $(\partial\phi)(\Omega)\subset Y,$ which concludes
the proof in the case when $X$ is compact. Note that, alternatively,
we could have used directly that, by Theorem \ref{thm:Static conv in Riem setting},
the limit $\phi$ satisfies $\phi=\left(\chi_{Y}+(\chi_{X}+\phi)^{*}\right),$
which is the unique normalized solution to the second boundary problem
in question (by the proof of the previous lemma). To prove the non-compact
case when $X$ is replaced by $\R^{n}$ denote by $X_{R}$ the ball
or radius $R$ centered at $0$ and assume that $R$ is sufficiently
large to ensure that $\mu$ and all $\mu^{(k)}$ are supported in
the interior of $X_{R}.$ Denote by $\phi_{R}^{(k)}(x)$ the normalized
fixed point of the corresponding iteration on $C(X_{R}).$ Since $\mu^{(k)}$
is supported in $X_{R}$ it follows, from the very definition of the
iteration, that $\phi_{R}^{(k)}(x)$ is, in fact, independent of $R.$
Accordingly, we may define a normalized convex function $\phi^{(k)}$
on $\R^{n}$ by setting $\phi^{(k)}:=\phi_{R}^{(k)}$ on $X_{R}$
for any $R$ sufficiently large. Then $\phi^{(k)}$ is the unique
normalized fixed point of the corresponding iteration on $C(\R^{n}).$
Moreover, since $X_{R}$ is compact $\phi^{(k)}$ converges uniformly
on $X_{R}$ to a convex function $\phi$ solving the second boundary
value problem \ref{eq:secon bound value pr} on $X_{R}.$ Since $R$
is arbitrary this shows that $\phi,$ in fact, solves the problem
on all of $\R^{n}.$
\end{proof}

\subsection{\label{subsec:Application-to-convolutional}Application to convolutional
Wasserstein distances}

Theorem \ref{Thm:weak conv of fixed points towards optimal transport plans}
holds more generally (with essentially the same proof) when the function
$c$ is replaced by a sequence $c_{k}$ such that 
\begin{equation}
\left\Vert c_{k}-c\right\Vert _{L^{\infty}(X\times Y)}\rightarrow0\label{eq:unif conv of ck}
\end{equation}
For example, in the Riemannian setting of Theorem \ref{thm:Static conv in Riem setting}.
denoting by $\mathcal{K}_{t}(x,y)$ the corresponding heat kernel
and setting $t:=2k^{-1},$ the sequence 
\begin{equation}
c_{k}:=-t^{-1}\log\mathcal{K}_{t}(x,y)\label{eq:c k as heat}
\end{equation}
satisfies \ref{eq:unif conv of ck}, by Varadhan's formula (which
holds more generally on Lipschitz Riemannian manifolds \cite{no}).
Replacing $c$ by $c_{k}$ in this setting thus has the effect of
replacing the matrix $A_{ij}:=e^{-kd^{2}(x_{i},x_{j})/2}$ appearing
in the corresponding Sinkhorn algorithm with the heat kernel matrix
$\mathcal{K}_{2k^{-1}}(x_{i},x_{j})$ which, as emphasized in \cite{s-d---},
has computational advantages. Following \cite{s-d---} we consider
the squared \emph{convolutional Wasserstein distance }between $\mu$
and $\nu:$
\[
\mathcal{W}_{(k)}^{2}(\mu,\nu):=k^{-1}\inf_{\gamma\in\Pi(\mu^{(k)},\nu^{(k})}\mathcal{I}(\gamma,\mathcal{K}_{2k^{-1}}\mu^{(k)}\otimes\nu^{(k)}),
\]
 defined wrt approximations $\mu^{(k)}$ and $\nu^{(k)},$ for example
given by weighted point clouds, as in Example \ref{exa:The-density-property}.
In \cite[Page 3]{s-d---}, the problem of developing conditions for
the convergence of $\mathcal{W}_{(k)}^{2}(\mu,\nu)$ was posed. The
following result provides an answer:
\begin{thm}
\label{thm:conv wasserst}Let $X$ be a compact Riemannian manifold
(possibly with boundary) and set $c(x,y):=d(x,y)^{2}/2,$ where $d$
is the Riemannian distance function. Suppose that $\mu^{(k)}\rightarrow\mu$
and $\nu^{(k)}\rightarrow\mu$ in $\mathcal{P}(X)$ and that $\mu^{(k)}$
and $\nu^{(k)}$ satisfy the density property \ref{eq:density property}.
Then 
\[
\lim_{k\rightarrow\infty}\mathcal{W}_{(k)}^{2}(\mu,\nu)=\mathcal{W}^{2}(\mu,\nu),
\]
 where $\mathcal{W}^{2}(\mu,\nu)$ denotes the squared $L^{2}-$Wasserstein
distance between $\mu$ and $\nu.$ 
\end{thm}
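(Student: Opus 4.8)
The plan is to deduce the convergence of the convolutional Wasserstein distance from the entropic Kantorovich duality \eqref{eq:inf of I is inf of F} together with the uniform convergence of the fixed points established in Theorem \ref{Thm:weak conv of fixed points towards optimal transport plans}, applied to the perturbed cost $c_k := -t^{-1}\log\mathcal{K}_t$, $t=2k^{-1}$. First I would record that, by the reasoning in Section \ref{subsec:The-parametrized-setting}, the quantity $\mathcal{W}_{(k)}^2(\mu,\nu)$ is exactly $k^{-1}$ times the minimal value of $\mathcal{I}(\cdot\,|\,\gamma_{c_k}^{(k)})$ over $\Pi(\mu^{(k)},\nu^{(k)})$, where $\gamma_{c_k}^{(k)} = e^{-kc_k}\mu^{(k)}\otimes\nu^{(k)}$. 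By Proposition \ref{prop:The-unique-minimizer} (in its $k$-parametrized, $c_k$-adapted form) this minimal value equals $\inf_{C(X)\times C(Y)}\mathcal{F}^{(k)}$, the infimum being attained at a pair $(u^{(k)},v^{(k)})$ with $v^{(k)}=v^{(k)}[u^{(k)}]$, $u^{(k)}=u^{(k)}[v^{(k)}]$. Thus
\[
\mathcal{W}_{(k)}^2(\mu,\nu) = \mathcal{F}^{(k)}(u^{(k)}) = I_{\mu^{(k)}}(u^{(k)}) - \mathcal{L}_{\nu^{(k)}}(u^{(k)}) = \int u^{(k)}\mu^{(k)} + \int v^{(k)}\nu^{(k)},
\]
where the last equality uses $\mathcal{L}_{\nu^{(k)}}(u^{(k)}) = -\int v^{(k)}[u^{(k)}]\,\nu^{(k)} = -\int v^{(k)}\nu^{(k)}$.

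Next I would pass to the limit in each of the two integrals. Since $c_k\to c$ uniformly by Varadhan's formula (and, as noted in Section \ref{subsec:Application-to-convolutional}, Theorem \ref{Thm:weak conv of fixed points towards optimal transport plans} applies verbatim with $c$ replaced by such a sequence $c_k$), after passing to a subsequence we get $u^{(k)}\to u$ uniformly on $X$ and $v^{(k)}\to v$ uniformly on $Y$, with $(u,v)$ a $c$-convex/$c$-concave pair of Kantorovich potentials realizing the optimal transport between $\mu$ and $\nu$ for the cost $c=d^2/2$; in particular $u(x)+v(y)+c(x,y)\ge 0$ with equality on the support of the optimal plan $\gamma$. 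Combining the uniform convergence of $u^{(k)},v^{(k)}$ with the weak convergence $\mu^{(k)}\to\mu$, $\nu^{(k)}\to\nu$ gives $\int u^{(k)}\mu^{(k)}\to\int u\,\mu$ and $\int v^{(k)}\nu^{(k)}\to\int v\,\nu$, hence
\[
\lim_{k\to\infty}\mathcal{W}_{(k)}^2(\mu,\nu) = \int u\,\mu + \int v\,\nu = \inf_{\gamma\in\Pi(\mu,\nu)}\int c\,\gamma = \mathcal{W}^2(\mu,\nu),
\]
the middle equality being Kantorovich duality \eqref{eq:Kant duality} together with the fact (from Theorem \ref{Thm:weak conv of fixed points towards optimal transport plans}) that $(u,v)$ is a maximizing pair. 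Since every subsequence has a further subsequence along which the limit is the same value $\mathcal{W}^2(\mu,\nu)$, the full sequence converges.

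The main obstacle is purely technical: making precise that the minimal value of $\mathcal{I}(\cdot\,|\,\gamma_{c_k}^{(k)})$ on $\Pi(\mu^{(k)},\nu^{(k)})$ is genuinely attained at the fixed point pair $(u^{(k)},v^{(k)})$ with the clean value $\int u^{(k)}\mu^{(k)}+\int v^{(k)}\nu^{(k)}$, rather than only being characterized as $\inf\mathcal{F}^{(k)}$ — this requires checking that the discrete fixed point furnished by Sinkhorn (Theorem \ref{thm:conv of u m in general setting}) coincides with the entropy minimizer of Proposition \ref{prop:The-unique-minimizer} in the $c_k$-twisted setting, i.e.\ that $\gamma_{u^{(k)}} = e^{-k(u^{(k)}+v^{(k)})}\gamma_{c_k}^{(k)}$ lies in $\Pi(\mu^{(k)},\nu^{(k)})$ (Lemma \ref{lem:-u fixed iff gamma u is in Pi}) and hence is the unique minimizer. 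A secondary point requiring care is that Varadhan's asymptotics $-t\log\mathcal{K}_t(x,y)\to d(x,y)^2/2$ hold uniformly on $X\times X$ for a compact (possibly Lipschitz, possibly with boundary) Riemannian manifold; for this I would cite \cite{no} as in Section \ref{subsec:Application-to-convolutional}. Everything else is a routine combination of weak-$*$ convergence with uniform convergence of the potentials.
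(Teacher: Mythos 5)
Your proposal is correct and follows essentially the same route as the paper's proof: identify $\mathcal{W}_{(k)}^{2}(\mu,\nu)$ with the infimum of the entropic functional $\mathcal{F}^{(k)}$ for the heat-kernel cost $c_{k}$, pass to the limit by rerunning the argument of Theorem \ref{Thm:weak conv of fixed points towards optimal transport plans} with $c$ replaced by $c_{k}$, and conclude via Kantorovich duality \ref{eq:Kant duality}. The only (inessential) difference is that you track the convergence of the fixed-point potentials $(u^{(k)},v^{(k)})$ and evaluate the functional there, whereas the paper works directly with the convergence of the infima $\inf\mathcal{F}^{(k)}\rightarrow\inf J$.
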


\begin{proof}
Repeating the argument in the proof of Theorem \ref{Thm:weak conv of fixed points towards optimal transport plans},
with $c$ replaced by $c_{k}$ as above, gives 
\[
\lim_{k\rightarrow\infty}\inf_{u\in C^{0}(X)}\mathcal{F}^{(k)}=\inf_{u\in C(X)}J(u)
\]
 According to formula \ref{eq:inf u equal to inf u} the infimum appearing
in the left hand side above is precisely $\mathcal{W}_{(k)}^{2}(\mu,\nu).$
Since the infimum in the right hand side above is equal to $\mathcal{W}^{2}(\mu,\nu),$
by Kantorovich duality (formula \ref{eq:Kant duality}), the result
follows.
\end{proof}

\section{\label{sec:Convergence-of-the iter torus}Convergence of the iteration
towards parabolic optimal transport equations on the torus}

\subsection{\label{subsec:Proof-of-Theorem}Proof of Theorem \ref{thm:conv in dynamic torus setting intro}}

We will denote by $\delta_{\Lambda_{k}}$ the uniform discrete probability
measure supported on the discrete torus $\Lambda_{k}$ with edge-length
$1/k:$ 
\[
\delta_{\Lambda_{k}}:=\frac{1}{N_{k}}\sum_{x_{i}\in\Lambda_{k}}\delta_{x_{i}}
\]
Given two probability measures $\mu=e^{-f}dx$ and $\nu=e^{-g}dy$
we can then define their discretizations as the probability measures
\[
\mu^{(k)}=\frac{1}{\int_{T^{n}}e^{-f}\delta_{\Lambda_{k}}}e^{-f}\delta_{\Lambda_{k}},\,\,\,\nu^{(k)}=\frac{1}{\int_{T^{n}}e^{-g}\delta_{\Lambda_{k}}}e^{-g}\delta_{\Lambda_{k}}
\]
Note that the normalization constants have the asymptotics 
\begin{equation}
\left|\int_{T^{n}}e^{-f}\delta_{\Lambda_{k}}-1\right|\leq C_{f}k^{-1},\,\,\,\,\left|\int_{T^{n}}e^{-g}\delta_{\Lambda_{k}}-1\right|\leq C_{g}k^{-1}\label{eq:asympt of norming constants}
\end{equation}
 where $C$ only depends on an upper bound on $|\nabla f|$ on $T^{n}$
(and similarly for $g$); see \cite[Page 2]{b-c-c-g-s-t}. 
\begin{rem}
As will be clear from the proof, Theorem \ref{thm:conv in dynamic torus setting intro}
also holds with the simpler discretizations $e^{-f}\delta_{\Lambda_{k}}$
and $e^{-g}\delta_{\Lambda_{k}}$ (which, in general, are not probability
measures).
\end{rem}

We start with the following discrete version of the classical Laplace
method of integration, proved in Appendix A: 
\begin{lem}
\label{lem:discrete stat phase}Let $\alpha$ be a lower-semicontinuous
(lsc) function on $T^{n}$ with a unique minimum at $x_{0}$ and assume
that $\alpha$ is $C^{4}-$smooth on a neighborhood of $x_{0}$ and
$\nabla^{2}\alpha(x_{0})>0.$ Then, if $h$ is $C^{2}-$smooth 
\[
k^{n/2}\int e^{-k\alpha}h\delta_{\Lambda_{k}}=(2\pi)^{n/2}e^{-k\alpha(x_{0})}\frac{h(x_{0})}{\sqrt{\det(\nabla^{2}\alpha(x_{0}))}}(1+\epsilon_{k}),\,\,\,|\epsilon_{k}|\leq Ck^{-1}
\]
where the constant $C$ only depends on an upper bounds on the $C^{4}-$norm
of $\alpha,$ the $C^{2}-$norm of $h$ and a strict lower bound on
the smallest eigenvalue of $\nabla^{2}\alpha$ on a neighborhood of
$x_{0}.$ 
\end{lem}

We next prove the key asymptotic result that will be used in the proof
of Theorem \ref{thm:conv in dynamic torus setting intro} giving the
asymptotics of the function $\rho_{ku}(x),$ defined by formula \ref{eq:explicit expres for pho}
(the result can be viewed as a refinement of Lemma \ref{lem:dens prop}).
\begin{prop}
\label{prop:asympt of pho}Let $u$ be a strictly quasi-convex function
in $C^{4}(T^{n}).$ Then the following asymptotics hold
\[
\rho_{ku}(x)=\det(I+\nabla^{2}u(x))e^{f(x)-g(x+\nabla u(x))}(1+\epsilon_{k}),\,\,\,|\epsilon_{k}|\leq Ck^{-1}
\]
where the constant $C$ only depends on upper bounds on the $C^{4}-$norm
of $u$ and the $C^{2}-$norms of $f$ and $g$ and a strict positive
lower bound on the eigenvalues of the matrix $\left(I+\nabla^{2}u(x)\right).$
\end{prop}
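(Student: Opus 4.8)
The plan is to compute the asymptotics of the two integrals appearing in the explicit formula \ref{eq:explicit expres for pho} for $\rho_{ku}(x)$ by applying the discrete stationary phase Lemma \ref{lem:discrete stat phase} twice, and then bootstrapping. First, recall that the sequence $v^{(k)}[u]$ is, by Lemma \ref{lem:dens prop}, uniformly close to $u^{c}$; more precisely, I will first establish the refined asymptotics $v^{(k)}[u](y) = u^{c}(y) + O(1/k)$ together with control on the position of the maximizer. For fixed $y$, the function $x \mapsto c(x,y) + u(x) = d(x,y)^2/2 + u(x)$ has, by Lemma \ref{lem:smooth}, a unique non-degenerate minimum at $x = x_y$ where the Hessian of $d(\cdot,y)^2/2$ equals the identity, so the Hessian of the phase is $I + \nabla^2 u(x_y)$, which is strictly positive by quasi-convexity. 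Applying Lemma \ref{lem:discrete stat phase} to the numerator integral $\int e^{-k(c(x,y)+u(x))} \mu^{(k)}(x)$, with $\mu^{(k)}$ the discretization of $e^{-f}dV$, gives
\[
k^{-n/2}\int e^{-k(c(x,y)+u(x))}\mu^{(k)}(x) = (2\pi)^{n/2} e^{-k(c(x_y,y)+u(x_y))}\frac{e^{-f(x_y)}}{\sqrt{\det(I+\nabla^2 u(x_y))}}(1 + O(k^{-1})),
\]
using that $\mu^{(k)} = e^{-f}\delta_{\Lambda_k}$ up to normalization (the normalizing constant is itself $1 + O(1/k)$ by the same lemma applied with $u = 0$). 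Since $c(x_y,y)+u(x_y) = -u^c(y)$, this yields the claimed form of $v^{(k)}[u]$.

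Next I substitute this into \ref{eq:explicit expres for pho}. Writing $\rho_{ku}(x)$ as an integral over $y$ against $v^{(k)} = v^{(k)}[u]$, the integrand is
\[
\frac{e^{-k(c(x,y)+u(x))}}{\int_X e^{-k(c(x',y)+u(x'))}\mu^{(k)}(x')}\,\nu^{(k)}(y),
\]
and by the previous step the denominator is $(2\pi k^{-1})^{n/2} e^{k u^c(y)} e^{-f(x_y)} \det(I+\nabla^2 u(x_y))^{-1/2}(1+O(1/k))$. Therefore $\rho_{ku}(x)$ becomes, up to the $(1+O(1/k))$ factor, a constant times
\[
k^{n/2}\int_Y e^{-k(c(x,y)+u(x)-u^c(y))}\,e^{f(x_y)}\sqrt{\det(I+\nabla^2 u(x_y))}\;\nu^{(k)}(y),
\]
with $\nu^{(k)} = e^{-g}\delta_{\Lambda_k}$ up to normalization. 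The phase in $y$ is $\Psi(y) := c(x,y) + u(x) - u^c(y)$, which by the definition of the $c$-Legendre transform is nonnegative and vanishes precisely when $y = y_x := x + \nabla u(x)$ (using Lemma \ref{lem:quasi conv}: the map $x\mapsto y_x$ is the inverse of $y\mapsto x_y$, so $x_{y_x} = x$). One then checks that the Hessian of $\Psi$ in $y$ at $y_x$ equals $(I+\nabla^2 u^c(y_x))$, which by the matrix relation \ref{eq:matrix relation for u} equals $(I+\nabla^2 u(x))^{-1}$, again strictly positive. Applying Lemma \ref{lem:discrete stat phase} a second time, and evaluating all slowly-varying factors at $y = y_x$ (so $x_{y_x} = x$, hence $e^{f(x_{y_x})} = e^{f(x)}$), the Gaussian prefactor $\det(\nabla^2_y\Psi(y_x))^{-1/2} = \det(I+\nabla^2 u(x))^{1/2}$, and the density factor at $y_x$ contributes $e^{-g(y_x)} = e^{-g(x+\nabla u(x))}$. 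Collecting the powers of $k$, of $2\pi$, and of the normalizing constants, everything cancels except $\det(I+\nabla^2 u(x))\, e^{f(x) - g(x+\nabla u(x))}$, giving the asserted formula with the multiplicative error $(1+Ck^{-1})$.

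The main obstacle is keeping the error terms uniform: Lemma \ref{lem:discrete stat phase} must be applied for a whole family of phases (parametrized by $x$ or by $y$), so I need the $C^4$-norms of the phases $c(\cdot,y)+u$ and $c(x,\cdot)-u^c(\cdot) + u(x)$, the $C^2$-norms of the amplitudes, and the lower bounds on the Hessians near the critical points, to all be bounded uniformly in the parameter. This uses compactness of $T^n$, smoothness of $u$ (hence of $u^c$, by Lemma \ref{lem:quasi conv}), Lemma \ref{lem:smooth} to guarantee that $d(\cdot,y)^2/2$ is genuinely smooth with Hessian equal to the identity on a fixed-size neighborhood of the critical point (independent of $y$, again by compactness/continuity), and the strict positive lower bound on $I+\nabla^2 u$. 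A secondary technical point is that in the second application the amplitude itself carries a $(1+O(1/k))$ error coming from the first application, uniformly in $y$; since this error is $C^2$-bounded (or can be absorbed after noting it only enters multiplicatively and is close to $1$), it does not affect the leading term and only contributes to the overall $O(1/k)$. Once uniformity is in hand, the computation is the routine bookkeeping sketched above.
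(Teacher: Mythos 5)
Your proposal is correct and follows essentially the same route as the paper: apply the discrete stationary phase lemma (Lemma \ref{lem:discrete stat phase}) to the inner integral to get $e^{kv^{(k)}[u](y)}\sim (2\pi k^{-1})^{n/2}e^{ku^{c}(y)}e^{-f(x_{y})}\det(I+\nabla^{2}u(x_{y}))^{-1/2}$, substitute into \ref{eq:explicit expres for pho}, apply the lemma again in $y$ at the critical point $y_{x}$, and cancel the Hessians via \ref{eq:matrix relation for u}; your handling of uniformity and of the multiplicative $(1+O(1/k))$ error in the amplitude matches the paper's treatment (which splits off a remainder $R_{k}$ bounded by $\sup\epsilon_{k}$ times an $O(1)$ integral). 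The only slip is a sign typo: the phase in the second integral is $\Psi(y)=c(x,y)+u(x)+u^{c}(y)$ (not $-u^{c}(y)$), which is the quantity that is nonnegative and vanishes exactly at $y=y_{x}$, as your subsequent reasoning correctly assumes.
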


\begin{proof}
First recall that, by Lemma \ref{lem:quasi conv}, $u^{c}$ is also
strictly quasi-convex and in $C^{4}(T^{n}).$ Set $c(x,y)=d_{T^{n}}^{2}(x,y)/2.$
In the proof we will denote by $O(k^{-1})$ any sequence of functions
satisfying $|O(k^{-1})|\leq Ck^{-1},$ for a constant $C$ depending
on data as in the statement of the proposition. Fix $y\in Y.$ The
function $x\mapsto c(x,y)+u(x)$ is lsc on $T^{n}$ and has a unique
minimum $x_{y}$ and it is $C^{4}-$smooth close to $x_{y}$ (by Lemma
\ref{lem:smooth}). Applying the asymptotics \ref{eq:asympt of norming constants}
and Lemma \ref{lem:discrete stat phase} thus gives 
\[
k^{n/2}e^{kv^{(k)}[u](y)}:=k^{n/2}\int e^{-k\left(c(x,y)+u(x)\right)}\mu^{(k)}(x)=e^{ku^{c}(y)}\left(h(y)+O(1)k^{-1}\right),
\]
 where 
\[
h(y):=(2\pi)^{n/2}\frac{\exp(-f(x_{y}))}{\sqrt{\det(I+\nabla^{2}u(x_{y}))}}.
\]
As a consequence, the inverse of $k^{n/2}e^{kv^{(k)}[u](y)}$ is given
by $e^{-ku^{c}(y)}\left(h(y)^{-1}+O(1)k^{-1}\right)$ and hence
\[
e^{ku^{(k)}\left[v^{(k)}[u]\right](x)}:=k^{n/2}\int e^{-kc(x,y)}\left(\frac{1}{k^{n/2}e^{kv^{(k)}[u](y)}}\right)\nu^{(k)}(y)=
\]
\begin{equation}
=k^{n/2}\int e^{-k\left(c(x,y)+u(x)+u^{c}(y)\right)}h(y)^{-1}\nu^{(k)}(y)+R_{k}(x),\label{eq:rho in pf prop rhp}
\end{equation}
 where 
\[
R_{k}(x)\leq O(k^{-1})k^{n/2}\int e^{-k\left(c(x,y)+u(x)+u^{c}(y)\right)}\nu^{(k)}(y).
\]
By Lemma \ref{lem:discrete stat phase} (and the asymptotics \ref{eq:asympt of norming constants})
we have 
\[
k^{n/2}\int e^{-k\left(c(x,y)+u(x)+u^{c}(y)\right)}\nu^{(k)}(y)\leq C'
\]
and hence it follows that 
\[
R_{k}(y)=O(k^{-1}).
\]
Now, the same localization argument as above shows that the integral
over $\nu^{(k)}(y)$ in formula \ref{eq:rho in pf prop rhp} localizes
around a small neighborhood $V$ of $y=y_{x}.$ Hence, applying Lemma
\ref{lem:discrete stat phase} (and the asymptotics \ref{eq:asympt of norming constants})
again gives 
\[
k^{n/2}\int e^{-k(d(x,y)^{2}/2+u(x)+u^{c}(y))}h(y)^{-1}\nu^{(k)}(y)=e^{k\left((u^{c})^{c}\right)(x)}h^{-1}(y_{x})\frac{(2\pi)^{n/2}\exp(-g(y_{x}))}{\sqrt{\det(I+\nabla^{2}u^{c}(y_{x}))}}(1+O(k^{-1})).
\]
All in all, since $\left((u^{c})^{c}\right)=u,$ this shows that 
\[
e^{ku^{(k)}\left[v^{(k)}[u]\right](x)}=e^{ku(x)}h^{-1}(y_{x})\frac{(2\pi)^{n/2}\exp(-g(y_{x}))}{\sqrt{\det(I+\nabla^{2}u^{c}(y_{x}))}}(1+O(k^{-1})).
\]
The proof is thus concluded by invoking the inverse properties of
the Hessians in Lemma \ref{lem:quasi conv}. 
\end{proof}
\begin{lem}
\label{lem:general conv towards parab}Let $X$ be a compact topological
space and consider the following family of difference equations on
$C(X),$ parametrized by a positive number $k$ and a discrete time
$m:$ 
\begin{equation}
u_{m+1}^{(k)}-u_{m}^{(k)}=k^{-1}D^{(k)}(u_{m}^{(k)}),\label{eq:differ eq abs}
\end{equation}
where $D^{(k)}$ is an operator on $C(X),$ which descends to $C(X)/\R$
and with the property that $I+k^{-1}D^{(k)}$ is an increasing operator
(wrt the usual order relation on $C(X)).$ Assume that there exists
a subset $\mathcal{H}$ of $C(X)$ and an operator $D$ on $\mathcal{H}$
such that for any $u$ in the class 
\begin{equation}
\left|D^{(k)}(u)-D(u)\right|\leq C_{u}\epsilon_{k},\label{eq:conv of D k}
\end{equation}
 where $C_{u}$ is a positive constant depending on $u$ and $\epsilon_{k}$
is a sequence of positive numbers converging towards zero. Assume
that $u_{0}^{(k)}=u_{0}$ where $u_{0}$ is a fixed function in $\mathcal{H}$
and that there exists a family $u_{t}\in\mathcal{H},$ which is two
times differentiable wrt $t$ and solving 
\begin{equation}
\frac{\partial u_{t}}{\partial t}=D(u_{t}),\,\,\,\,(u_{t})_{|t=0}=u_{0},\label{eq:evol eq for u t abs}
\end{equation}
for $t\in[0,T].$ Then, for any $(k,m)$ such that $m/k\in[0,T],$
\[
\sup_{T^{n}}\left|u_{m}^{(k)}-u_{m/k}\right|\leq C_{T}\frac{m}{k}\cdot2\max\{\epsilon_{k},k^{-1}\},\,\,\,C_{T}:=\max\{\sup_{t\in[0,T]}C_{u_{t}},\sup_{X\times[0,T]}\frac{\partial^{2}u_{t}}{\partial^{2}t}\}.
\]
\end{lem}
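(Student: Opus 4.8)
The plan is to compare the discrete iteration with the continuous flow by a Gronwall-type argument, tracking the error $E_m := \sup_X |u_m^{(k)} - u_{m/k}|$ as $m$ increases in unit steps. The key idea is to produce a one-step recursion $E_{m+1} \leq E_m + (\text{one-step error})$, where the one-step error splits into a \emph{consistency} part (coming from replacing $D^{(k)}$ by $D$ and from the Euler-step truncation of the ODE) and a \emph{stability} part (coming from the monotonicity of $I + k^{-1}D^{(k)}$, which prevents errors from being amplified).

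First I would write the continuous solution at the two relevant times using Taylor's theorem in $t$: since $u_t \in \mathcal{H}$ is twice differentiable in $t$ and solves $\partial_t u_t = D(u_t)$, we have
\[
u_{(m+1)/k} = u_{m/k} + k^{-1} D(u_{m/k}) + \tfrac12 k^{-2}\,\partial_t^2 u_{\xi}
\]
for some $\xi \in [m/k,(m+1)/k]$, so that $|u_{(m+1)/k} - u_{m/k} - k^{-1}D(u_{m/k})| \leq \tfrac12 k^{-2} \sup_{X\times[0,T]} |\partial_t^2 u_t|$. On the discrete side, $u_{m+1}^{(k)} = u_m^{(k)} + k^{-1}D^{(k)}(u_m^{(k)})$ exactly. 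Next I would insert the intermediate quantity $u_{m/k} + k^{-1}D^{(k)}(u_{m/k})$ and estimate $u_{m+1}^{(k)} - u_{(m+1)/k}$ as the sum of three terms: (i) $(I + k^{-1}D^{(k)})(u_m^{(k)}) - (I+k^{-1}D^{(k)})(u_{m/k})$, bounded in sup-norm by $E_m$ because $I + k^{-1}D^{(k)}$ is an increasing operator and commutes with addition of constants — applying it to the two functions $u_{m/k} - E_m \leq u_m^{(k)} \leq u_{m/k} + E_m$ and using monotonicity plus the $\mathbb{R}$-equivariance gives the bound $E_m$; (ii) $k^{-1}(D^{(k)}(u_{m/k}) - D(u_{m/k}))$, bounded by $k^{-1} C_{u_{m/k}} \epsilon_k \leq k^{-1} C_T \epsilon_k$ using \eqref{eq:conv of D k}; (iii) the ODE truncation term, bounded by $\tfrac12 k^{-2} C_T \leq k^{-1}C_T \cdot k^{-1}$. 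Combining, $E_{m+1} \leq E_m + k^{-1} C_T \max\{\epsilon_k, k^{-1}\}$.

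Finally I would iterate this recursion from $E_0 = 0$ (since $u_0^{(k)} = u_0 = (u_t)_{|t=0}$): after $m$ steps, $E_m \leq m k^{-1} C_T \max\{\epsilon_k, k^{-1}\}$, which is exactly the claimed bound. The main obstacle — the only place where real care is needed — is step (i): one must verify that the hypothesis ``$I + k^{-1}D^{(k)}$ is increasing'' genuinely yields a sup-norm contraction-type bound for the \emph{difference} of the operator applied to two functions. This works precisely because an increasing operator that also commutes with adding real constants is automatically $1$-Lipschitz in the sup-norm: from $g - \|f-g\|_\infty \leq f \leq g + \|f-g\|_\infty$ one gets $(I+k^{-1}D^{(k)})(g) - \|f-g\|_\infty \leq (I+k^{-1}D^{(k)})(f) \leq (I+k^{-1}D^{(k)})(g) + \|f-g\|_\infty$. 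All other steps are routine Taylor estimates and bookkeeping.
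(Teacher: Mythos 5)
Your proposal is correct and follows essentially the same route as the paper: a consistency estimate via second-order Taylor expansion of $t\mapsto u_t$, a stability step showing that the monotone, $\R$-equivariant operator $I+k^{-1}D^{(k)}$ is $1$-Lipschitz in sup-norm, and an induction on $m$ starting from $E_0=0$. The point you flag as needing care in step (i) is exactly the paper's Step 2, argued the same way.
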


\begin{proof}
We will write $\psi_{k,m}=u_{m/k}.$ 

\emph{Step 1: The following holds for all $(k,m)$ }
\[
\sup\left|\psi_{k,m+1}-\psi_{k,m}-k^{-1}D^{(k)}(\psi_{k,m})\right|\leq k^{-1}C_{T}\epsilon'_{k},\,\,\,\epsilon'_{k}:=2\max\{\epsilon_{k},k^{-1}\}.
\]

Indeed, using the mean value theorem we can write 
\[
\psi_{k,m+1}-\psi_{k,m}=\frac{1}{k}(\frac{u_{m/k+1/k}-u_{m/k}}{1/k})=\frac{1}{k}(\frac{\partial u_{t}}{\partial t}_{|t=m/k}+O(k^{-1})),
\]
 where the term $O(k^{-1})$ may be estimated as $|O(k^{-1})|\leq A_{T}k^{-1},$
where $A_{T}=\sup_{X\times[0,T]}|\frac{\partial^{2}u_{t}}{\partial^{2}t}|.$
Using the evolution equation for $u_{t}$ and applying formula \ref{eq:conv of D k}
thus proves Step 1.

\emph{Step 2: The discrete evolution on $C(X)$ defined by the difference
equation \ref{eq:differ eq abs} decreases the sup-norm}, i.e. 
\[
\sup_{X}|\phi_{m+1}-\psi_{m+1}|\leq\sup_{X}|\phi_{m}-\psi_{m}|
\]
 if $\phi_{m}$ and $\psi_{m}$ satisfy the difference equation \ref{eq:differ eq abs}
for a fixed $k.$ 

To see this set $C:=\sup|\phi_{m}-\psi_{m}|.$ Then, $\phi_{m}\leq\psi_{m}+C$
and hence, since $I+k^{-1}D^{(k)}$ is assumed to be increasing,
\[
\phi_{m+1}=\phi_{m}+k^{-1}D^{(k)}(\phi_{m})\leq\psi_{m}+C+k^{-1}D^{(k)}(\psi_{m}+C)=\psi_{m+1}+C
\]
 In particular, $\sup(\phi_{m+1}-\psi_{m+1})\leq C.$ Applying the
same argument with the roles of $\phi$ and $\psi$ interchanged concludes
the proof.

\emph{Step 3: Conclusion:
\begin{equation}
\sup_{X}|u_{m}^{(k)}-\psi_{k,m}|\leq C_{T}\frac{m}{k}\epsilon'_{k}.\label{eq:induction hyp}
\end{equation}
}We will prove this by induction over $m$ (for $k$ fixed) the statement
being trivially true for $m=0.$ We fix the integer $k$ and assume
as an induction hypothesis that \ref{eq:induction hyp} holds for
$m$ with $C$ the constant in the previous inequality. Applying first
Step 2 and then the induction hypothesis thus gives 

\[
\sup_{X}|\left(\psi_{k,m}+k^{-1}D^{(k)}(\psi_{k,m})\right)-\left(u_{m}^{(k)}+k^{-1}D^{(k)}(u_{m}^{(k)})\right)|\leq\sup_{X}|\psi_{k,m}-u_{m}^{(k)}|\leq C_{T}\frac{m}{k}\epsilon'_{k}.
\]
Now, by Step 1, 
\[
\sup_{X}|\psi_{k,m+1}-(\psi_{k,m}+k^{-1}D^{(k)}(\psi_{k,m})|\leq\frac{C_{T}}{k}\epsilon'_{k}
\]
for all $(m,k).$ Hence, 
\[
\sup_{X}|\psi_{k,m+1}-u_{m+1}^{(k)}|\leq C_{T}\frac{m}{k}\epsilon'_{k}+C_{T}\frac{1}{k}\epsilon'_{k}=C_{T}\frac{(m+1)}{k}\epsilon'_{k},
\]
proving the induction step and hence the final Step 3.
\end{proof}
In the present setting $\mathcal{H}$ will be taken as subspace of
$C^{4}(T^{n})$ consisting of all strictly quasi-convex functions
$u$ and 
\[
D(u)(x):=\log(\det(\nabla^{2}u(x)+I))-g(\nabla u(x)+x)+f(x).
\]
The following proposition essentially follows from the results in
\cite{s-s,ki,k-s-w} (for completeness a proof is provided in Appendix
B). The space $C^{k,\alpha}(T^{n})$ denotes, as usual, space of all
functions such that the $k$th order derivatives are Hölder continuous
with Hölder exponent $\alpha\in]0,1[.$ 
\begin{prop}
\label{prop:existence and conv parab torus}Let $f$ and $g$ be two
given functions in $C^{2,\alpha}(T^{n})$. Then, for initial data
$u_{0}\in C^{4,\alpha}(T^{n})$ which is strictly quasi-convex there
exists a solution $u(x,t)$ to the corresponding parabolic PDE \ref{eq:parabolic eq intro}
in $C^{2}([0,\infty[\times T^{n})$ such that for any $t>0$ $u_{t}\in C^{4}(T^{n})$
and $u_{t}$ is strictly quasi-convex. Moreover, 
\begin{itemize}
\item There exists a positive constant $C_{1}$ - only depending on upper
bounds on the $C^{2}-$norms of $u_{0},f$ and $g$ and a strict positive
lower bound on the eigenvalues of the matrix $(I+\nabla^{2}u_{0}(x))$
- such that 
\[
C_{1}^{-1}I\leq\left(\nabla^{2}u_{t}(x)+I\right)\leq C_{1}I
\]
 and a constant $C_{2}$ which, moreover, depends on the $C^{4,\alpha}-$norm
of $u_{0}$ and the $C^{2,\alpha}-$norms of $f$ and $g$ such that
\[
\left\Vert u_{t}\right\Vert _{C^{4}(T^{n})}\leq C_{2}
\]
\item There exist positive constants $A_{1}$ and $A_{0}$ such that 
\begin{equation}
\sup_{T^{n}}|u_{t}-u_{\infty}|\leq A_{1}e^{-t/A_{0}}\label{eq:exp conv}
\end{equation}
where $u_{\infty}$ is a potential solving the corresponding optimal
transport problem (i.e. a solution to the corresponding stationary
equation).
\item if $u_{0},f$ and $g$ are in $C^{\infty}(T^{n}),$ then so is $u_{t}$
and $\left\Vert u_{t}\right\Vert _{C^{s}(T^{n})}\leq C_{s}$  for
any positive integer $s.$
\end{itemize}
\end{prop}

Note that differentiating the parabolic PDE \ref{eq:parabolic eq intro}
gives
\[
\frac{\partial^{2}u_{t}}{\partial^{2}t}=L_{u_{t}}[D(u_{t})],
\]
 where $L_{u_{t}}$ is the linearization of $D$ at $t_{t}.$ By the
first point in the previous theorem $L_{u_{t}}$ is a uniformly elliptic
second order operator (see formula \ref{eq:form for L u} in Appendix
B) and hence 
\[
\sup_{X\times[0,\infty[}|\frac{\partial^{2}u_{t}}{\partial^{2}t}|\leq C_{3}
\]
 where $C_{3}$ only depends on the constants $C_{1}$ and $C_{2}$
in the previous proposition and the $C^{2}-$norms of $f$ and $g.$

\subsubsection{Conclusion of proof of Theorem \ref{thm:conv in dynamic torus setting intro}
and Corollary \ref{cor:conv of explicit apprl intro}}

Consider the log Sinkhorn iteration and define $D^{(k)}(u)$ to be
$\log(\rho_{ku}),$ as in equation \ref{eq:difference equation}.
The proof of Theorem \ref{thm:conv in dynamic torus setting intro}
follows directly from combining Lemma \ref{lem:general conv towards parab}
with Propositions \ref{prop:asympt of pho}, \ref{prop:existence and conv parab torus}
and also using that the corresponding operator $S^{(k)}=I+k^{-1}D^{(k)},$
defined by formula \ref{eq:scaled log Sinkhor op}, is clearly increasing
and invariant under the additive $\R-$action. Finally, Corollary
\ref{cor:conv of explicit apprl intro} follows by combining Theorem
\ref{thm:conv in dynamic torus setting intro} with the exponential
convergence in formula \ref{eq:exp conv}. Indeed, setting $m_{k}=kt_{k}$
with $t_{k}:=A\log k,$ where $A$ is the constant appearing formula
\ref{eq:exp conv}, gives 
\[
\left\Vert u_{m_{k}}^{(k)}-u_{\infty}\right\Vert _{C(X)}\leq\left\Vert u_{m_{k}}^{(k)}-u_{t_{k}}\right\Vert _{C(X)}+\left\Vert u_{t_{k}}-u_{\infty}\right\Vert _{C(X)}\leq ACk^{-1}\log k+A_{1}(k^{-1})^{A/A_{0}}
\]
 where $C$ is bounded uniformly from above, independently of $t$
as follows from \ref{thm:conv in dynamic torus setting intro} (using
the uniform bounds on $u_{t}$ in Prop \ref{prop:existence and conv parab torus},
which ensure that $C$ is bounded from above, independently of $t).$
Setting $u_{k}:=u_{m_{k}}^{(k)}$ this proves the estimate \ref{eq:estimate in cor torus intro}
when $A>A_{0}.$

Next note that the estimate \ref{eq:estimate in cor torus intro}
implies the estimate \ref{eq:estimate for gamma k cor torus intro}
for $\gamma_{k}.$ Indeed, by definition, $\gamma_{k}:=e^{-kd_{T^{n}}^{2}(x,y)^{2}/2}e^{-ku_{k}(x)}e^{-kv_{k}(y)}\mu^{(k)}\otimes\nu^{(k)},$
where $v_{k}:=v[u_{k}].$ By the estimate \ref{eq:estimate in cor torus intro}
and Lemma \ref{lem:local dens prop in model case} there exists a
positive number $C$ such that 
\[
u_{k}(x)+v_{k}(y)\geq u(x)+u^{c}(y)+Ck^{-1}\log k.
\]
 The proof is thus concluded by invoking the following elementary
inequality, which we claim holds for any strictly quasi-convex function
$u$ in $C^{2}(T^{n}):$ 
\begin{equation}
d_{T^{n}}^{2}(x,y)^{2}/2+u(x)+u^{c}(y)\geq\frac{\delta}{2}d_{T^{n}}(y,F_{u}(x))^{2},\,\,\,F_{u}(x)=x+(\nabla u)(x),\label{eq:ineq using strict conv}
\end{equation}
under the assumption that 
\[
0<\delta I\leq\left(\nabla^{2}u(x)+I\right)\leq\delta^{-1}I.
\]
To see this we identify $u$ and $u^{c}$ with $\Z^{n}-$periodic
functions on $\R^{n}$ and set $\phi(x):=u(x)+|x|^{2}/2.$ Then $\phi^{*}(y)=u^{c}(y)+|y|^{2}/2,$
where $\phi^{*}$ is the classical Legendre-transform on $\R^{n}$
(compare with the proof of Lemma \ref{lem:quasi conv}). Hence, it
will be enough to show that 
\begin{equation}
-x\cdot y+\phi(x)+\phi^{*}(y)\geq\frac{\delta}{2}|y-\nabla\phi(x)|^{2}.\label{eq:ineq using strict ineq in phi}
\end{equation}
 Indeed, the claimed inequality \ref{eq:ineq using strict conv} follows
from the latter one after replacing $x$ with $x+m$ and taking the
infimum over all $m\in\Z^{n}.$ Now, by assumption $\nabla^{2}\phi\leq\delta^{-1}I$
and hence $\nabla^{2}\phi^{*}\geq\delta I$ (by \ref{eq:matrix relation for u}).
As a consequence, $\phi^{*}(y)\geq\phi^{*}(y-t)+t\cdot\nabla\phi^{*}(y-t)+\delta|t|^{2}/2$
for any $t\in\R^{n}.$ Setting $t:=y-\nabla\phi(x)$ and using that
$\phi^{*}(\nabla\phi(x))=\nabla\phi(x)\cdot x-\phi(x)$ and $(\nabla\phi^{*})(\nabla\phi(x))=x$
this implies the desired inequality \ref{eq:ineq using strict ineq in phi}.

\subsubsection{Proof of Corollary \ref{cor:cons transport cost}}

By Prop \ref{prop:optim crit} $C(\mu,\nu)=-J(u),$ where $u$ is
an optimal transport potential and $J$ is the Kantorovich functional.
Now, since $u$ and $u^{c}$ are Lipschitz continuous (with Lipschitz
constant $\sqrt{n}$) we can approximate the integrals defining $J(u)$
to get 
\[
-C(\mu,\nu)=J(u):=\int u\mu+\int u^{c}\nu=\int u\mu^{(k)}+\int u^{c}\nu^{(k)}+O(k^{-1})
\]
(see \cite[Page 2]{b-c-c-g-s-t}). Next, by Corollary\ref{cor:conv of explicit apprl intro}
$u=u_{m_{k}}^{(k)}+O(k^{-1}\log k).$ Similarly, applying Corollary
\ref{cor:conv of explicit apprl intro} to the situation where the
roles of $\mu$ and $\nu$ have been reversed shows that $v=v_{m_{k}}^{(k)}+O(k^{-1}\log k)$
and hence 
\[
C(\mu,\nu)=-\sum_{x_{i}\in\Lambda_{k}}p_{i}u_{m_{k}}^{(k)}(x_{i})-\sum_{y_{i}\in\Lambda_{k}}q_{i}v_{m_{k}}^{(k)}(y_{i})+O(k^{-1}\log k)
\]
The proof is thus concluded by expressing $u_{m_{k}}^{(k)}$ and $v_{m_{k}}^{(k)}(y_{i})$
in terms of $a^{(k)}(m)$ and $b^{(k)}(m),$ respectively (formulas
\ref{eq:change of variables u a}, \ref{eq:chang of variables v intro})
and using that $k^{-1}\log p_{i}=k^{-1}\log\left(e^{-f}(x_{i})(1+O(k^{-1})\right)=O(k^{-1})$
and similarly for $q_{i}.$ This proves the first statement in Corollary
\ref{cor:cons transport cost}. The last statement then follows from
the observation that 
\[
\frac{1}{2}d(\mu^{(k)},\nu^{(k)})^{2}=\frac{1}{2}d(\mu,\nu)^{2}+O(k^{-1})
\]
This observation is without doubt standard but, for completeness,
we provide a proof. Denote by $X_{k}$ and $Y_{k}$ the support of
$\mu^{(k)}$ and $\nu^{(k)},$ respectively. By definition, $\frac{1}{2}d(\mu^{(k)},\nu^{(k)})^{2}$
is the sup of $\left\langle c,\gamma\right\rangle $ over all probability
measures on $X_{k}\times Y_{k}$ with marginals $\mu^{(k)}$ and $\nu^{(k)},$
respectively (with the usual identifications). Since any probability
measure $\gamma$ on $T^{n}\times T^{n}$ with marginals $\mu^{(k)}$
and $\nu^{(k)}$ is automatically supported on $X_{k}\times Y_{k}$
we may as well take the sup over all such $\gamma.$ Hence, $\frac{1}{2}d(\mu^{(k)},\nu^{(k)})^{2}=C(\mu^{(k)},\nu^{(k)})$
on $T^{n}\times T^{n}.$ Kantorovich duality then gives
\[
-\frac{1}{2}d(\mu^{(k)},\nu^{(k)})^{2}=\inf_{u:\,u^{cc}=u}\left(\int u\mu^{(k)}+\int u^{c}\nu^{(k)}\right)=\inf_{u:\,u^{cc}=u}\left(\int u\mu+\int u^{c}\nu\right)+O(k^{-1}),
\]
where the last equality follows from the argument in the beginning
of the proof of the corollary. Using Kantorovich duality again thus
concludes the proof.

\subsection{\label{subsec:Comparison-with-previous sinkhorn rate}Comparison
with previous convergence rates for the Sinkhorn algorithm and sharpness}

As pointed out in Remark \ref{rem:lin convergence rate} it is well-known
that, for $k$ fixed, $u_{m}^{(k)}$ converges exponentially in $C(X)/\R$
towards a fixed point $u_{\infty}^{(k)}:$ 
\[
\left\Vert u_{m}^{(k)}-u_{\infty}^{(k)}\right\Vert _{C(X)/\R}\leq C_{0}e^{-\delta_{k}m}
\]
(i.e. it converges at \emph{linear rate} in the terminology of numerical
analysis). The general estimate on $e^{-\delta_{k}}$ provided in
\cite{f-l} gives that $1-e^{-\delta_{k}}$ is comparable to $1-2e^{-k}$
if we assume, for simplicity, that $c$ is Lipschitz continuous with
Lipschitz constant $1$ and that the diameter of $X$ is equal one
(see \cite{via}). Hence, $\delta_{k}$ is comparable to $e^{-k},$
which means that, for $k$ large, on would needs to run $O(e^{k})$
iterations to get close to the fixed point $u_{\infty}^{(k)}.$ On
the order hand experimental findings reported in \cite[Remark 4.15]{m-p}
and \cite{fe} suggest that in the ``geometric settings'', such
as in present torus setting, $\delta_{k}$ is of the order $O(k^{-1}).$
Hence, only $O(k)$ iterations should be needed to get close to the
limiting fixed point (see also \cite[Prop 18]{sc}, where this is
shown for a modified asymmetric version of the Sinkhorn algorithm,
inspired by the auction algorithm). This is confirmed by Theorem \ref{thm:conv in dynamic torus setting intro}
(and the proof of Corollary \ref{cor:conv of explicit apprl intro}).
Indeed, setting $m_{k}=t_{k}k$ and applying Theorem \ref{thm:conv in dynamic torus setting intro}
gives
\[
\left\Vert u_{m_{k}}^{(k)}-u_{\infty}^{(k)}\right\Vert _{C(X)}\leq Ct_{k}k^{-1}+\left\Vert u_{t_{k}}-u_{\infty}\right\Vert _{C(X)}+\left\Vert u_{t_{k}}-u_{\infty}\right\Vert _{C(X)}
\]
Hence, by the exponential convergence in Prop \ref{prop:existence and conv parab torus}
and Theorem \ref{thm:conv in static torus setting intr}
\[
\left\Vert u_{m_{k}}^{(k)}-u_{\infty}^{(k)}\right\Vert _{C(X)}\leq Ct_{k}k^{-1}+Ae^{-t_{k}/A}+\epsilon_{k}.
\]
 In particular, by taking $m_{k}\sim Tk$ for $T$ sufficiently large
fixed number one can make the right hand side above arbitrarily small,
for $k$ large. But an important point of Corollary \ref{cor:conv of explicit apprl intro}
is that it provides a \emph{quantative} estimate on $\left\Vert u_{m_{k}}^{(k)}-u_{\infty}\right\Vert _{C(X)}$
when $m_{k}\sim Ak\log k,$ where $u_{\infty}$ is the optimal transport
potential, which arises as the limit of the corresponding parabolic
equation. It should also be pointed out that it can be shown that
the asymptotics in Prop \ref{prop:asympt of pho} are sharp in the
sense that error term can not be improved to $o(k^{-1}),$ in general.
Moreover, inspection of the proof of the Laplace method suggests that
$f$ and $g$ should have at least two derivatives to get the order
$O(k^{-1}).$ Hence, the rate of convergence in Theorem \ref{thm:conv in dynamic torus setting intro}
can be expected to be sharp.

\section{\label{sec:Generalizations-and-outlook}Convergence towards parabolic
optimal transport equations on compact manifolds}

Let $X$ and $Y$ be compact smooth manifolds (without boundary) and
$c$ a lsc function on $X\times Y,$ taking values in $]-\infty,\infty]$
which is smooth on the complement of a closed proper subset, denoted
by $\text{sing \ensuremath{(c)}.}$ 
\begin{rem}
Note that, in contrast to previous sections, we do not assume that
$c$ is continuous on all of $X\times Y.$ See for example Section
\ref{subsec:Application-to-the antenna} for a relevant example where
$c(x,y)=-\log|x-y|$ for $X$ and $Y$ given by the unit-sphere in
$\R^{n+1}$ and (hence $\text{sing \ensuremath{(c)}}$ is the diagonal
in $X\times Y).$ In the Riemannian case when $X=Y$ and $c=d^{2}/s$
(where $d$ is the Riemannian distance function) $c$ is, of course,
continuous on all of $X\times Y$ and $\text{sing \ensuremath{(c)} }$is
non-empty, due to the presence of cut-locus. 
\end{rem}

We will denote by $\partial_{x}$ the vector of partial derivatives
defined wrt a choice of local coordinates around a fixed point $x\in X.$
Given two normalized volume forms $\mu$ and $\nu$ in $\mathcal{P}(X)$
and $\mathcal{P}(Y)$ we locally express
\[
\mu=e^{-f}dx,\,\,\,\nu=e^{-g}dy
\]
in terms of the local volume forms $dx$ and $dy$ determined by a
choice of local coordinates. We assume that $f$ and $g$ are $C^{\infty}-$smooth.

Following standard practice we will assume that the cost function
satisfies the following assumptions 
\begin{itemize}
\item \emph{(A1) (``Twist condition'')} The map $y\mapsto\partial_{x}c(x,y)$
is injective for any $(x,y)\in X\times Y-\text{sing \ensuremath{(c)}}$
\item \emph{(A2) (``Non-degeneracy'')} det $(\partial_{x_{i}}\partial_{y_{j}}c)(x,y)\neq0$
for any $(x,y)\in X\times Y-\text{sing \ensuremath{(c)}}$
\end{itemize}
\begin{rem}
See \cite{v2} for an in depth discussion of various assumption on
cost functions. In \cite{v2} A1+A2 is called the strong twist condition
and as pointed out in \cite[Remark 12.23]{v2} it holds for the cost
function derived from any well-behaved Lagrangian, including the Riemannian
setting where $c=d^{2}/2).$ 
\end{rem}

\begin{defn}
\label{def:The-space H and F}The space $\mathcal{H}(X)$ (or $\mathcal{H}$
for short) of all \emph{smooth potentials }on $X$ is defined as the
subspace of $C^{\infty}(X)$ consisting of all $c-$convex (i.e. such
that $(u^{c})^{c}=u)$ smooth functions $u$ on $X$ with the property
that the subset $\Gamma_{u}$ of $X\times Y$ defined by formula \ref{eq:def of Gamma u}
is the graph of a diffeomorphism, denoted by $F_{u}$ and $c$ is
smooth in a neighborhood of $\Gamma_{u}.$

The definition has been made so that, if $u\in\mathcal{H}$ and $(F_{u})_{*}\mu=\nu,$
then $F_{u}$ is an optimal map (diffeomorphism) wrt the cost function
$c$ (by Prop \ref{prop:optim crit}). Accordingly, we will call $u$
the\emph{ potential} of the map $F_{u}.$ 
\end{defn}

\begin{lem}
\label{lem:twist}Assume that $c$ satisfies condition A1. If $u\in\mathcal{H},$
then 
\begin{equation}
y=F_{u}(x)\Longleftrightarrow(\partial_{x}c)(x,y)+(\partial_{x}u)(x)=0\label{eq:eq for F u}
\end{equation}
\end{lem}

\begin{proof}
By the very definition of $F_{u}$ we have that $y=F_{u}(x)$ iff
$u^{c}(y)=-c(x,y)-u(x).$ In turn, by the definition of $u^{c}$ this
equivalently means that $x$ maximizes the usc function $x'\mapsto\mathcal{C}(x'):=-c(x',y)-u(x')$
on $X.$ Now assume first that $y=F_{u}(x).$ Then, by assumption,
the function $\mathcal{C}$ is smooth close that $x$ and hence the
differential of $\mathcal{C}$ vanishes at $x,$ i.e. $(\partial_{x}c)(x,y)+(\partial_{x}u)(x)=0$
at $x.$ Conversely, if the latter equation holds then, by the twist
assumption A1, $y$ is uniquely determined by $x$ and since (as explained
above) $F_{u}(x)$ satisfies the equation in question it follows that
$y=F_{u}(x).$ 
\end{proof}
\begin{example}
Assume that $X=Y$ and that $c(x,y)\geq0$ with equality iff $x=y$
and that $c$ is smooth in a neighborhood of the diagonal. Then $u=0$
is in $\mathcal{H}$ (with $F_{0}$ given by the identity) and more
examples of potentials are obtained by using that $\mathcal{H}$ is
open in the $C^{\infty}-$topology, in general. In particular, this
applies in the ``Riemannian setting'', where $c=d^{2}/2$ on a compact
Riemannian manifold $X.$ 
\end{example}

\subsection{Parabolic optimal transport equations}

Consider now the following parabolic PDE, introduced in \cite{k-s-w}:
\begin{equation}
\frac{\partial u_{t}(x)}{\partial t}=\log\det\left(\partial_{x}F_{u_{t}}\right)-g(F_{u_{t}}(x))+f(x)\label{eq:parabolic pde general setting}
\end{equation}
expressed in terms of a choice of local coordinates, where $\det\left(\partial_{x}F_{u}\right)$
denotes the local Jacobian of the map $x\mapsto F_{u}(x).$ We note
that
\begin{itemize}
\item The right hand side in the equation \ref{eq:parabolic pde general setting}
is globally well-defined (i.e. independent of the choice of local
coordinates around $(x,F_{u}(x))$ in $X\times Y).$ Indeed, it is
equal to the logarithm of the quotient of $(F_{u})_{*}\mu$ and $\nu.$
Accordingly, $u$ is a stationary solution iff it is the potential
of an optimal transport map.
\item Differentiating the equation \ref{eq:eq for F u} reveals that (see
\cite[Section 12]{v2})
\end{itemize}
\begin{equation}
\det(\partial_{x}F_{u})=\frac{\det\left((\partial_{x}^{2}c)(x,F_{u}(x))+(\partial_{x}^{2}u)(x)\right)}{\det\left((\partial_{x}\partial_{y}c)(x,F_{u}(x))\right)}.\label{eq:form for det partial F}
\end{equation}

\subsection{Convergence of the Sinkhorn algorithm towards parabolic optimal transport}

We next introduce the following stronger local form of the ``global''
density property appearing in Lemma \ref{lem:dens prop}.
\begin{defn}
\label{def:local dens}A sequence (or family) of probability measures
$\mu^{(k)}$ on $X,$ converging weakly towards a measure $\mu,$
is said to have the \emph{local density property at order $s$ (and
length scale $k^{-1/2})$ }if there exists a positive integer $s\in[2,\infty[$
and constants $C_{1}$ and $C_{2}$ such for any fixed $x_{0}\in X$
there exist local coordinates $\xi:=(\xi_{1},...,\xi_{n})$ centered
at $x_{0}$ with the following property: for any sequence $h_{k}$
defined on the polydisc $2D_{k}$ of radius $2\log k,$ centered at
$0$ in $\R^{n}$ and satisfying $\left|\partial^{|\alpha|}h_{k}\right|(x)\leq C_{1}e^{-|x|^{2}/C_{1}}$
on $2D_{k}$ for all multiindices $\alpha$ satisfying $|\alpha|\leq s$
the bound
\[
k^{n/2}\int_{D_{k}}h_{k}(F_{x_{0}}^{(k)})_{*}(\mu^{(k)}-\mu)\leq C_{2}k^{-1},
\]
 holds, where $F_{x_{0}}^{(k)}$ is the scaled coordinate map from
a neighborhood of $x_{0}$ in $X$ into $\R^{n}$ defined by $F_{x_{0}}^{(k)}(x):=(k^{1/2}\xi(x)).$
If this is property holds for some $s\in[2,\infty[$ we will simply
say that $\mu^{(k)}$ has the local density property.
\end{defn}

We are now ready for the following generalization of Theorem \ref{thm:conv in dynamic torus setting intro}
stated in the introduction (as in the torus setting in Section \ref{subsec:Discrete-optimal-transport}
the entropic regularization parameter is expressed as $\epsilon=k^{-1},$
where $k$ is a positive number).
\begin{thm}
\label{thm:dynamic general}Let $c$ be a function satisfying the
assumptions A1 and A2 and $\mu^{(k)}$ and $\nu^{(k)}$ be two sequences
converging towards $\mu$ and $\nu$ in $\mathcal{P}(X)$ and $\mathcal{P}(Y),$
respectively, satisfying the local density property. Given $u_{0}\in\mathcal{H}$
assume that there exists a solution $u_{t}$ in $C^{2}(X\times[0,T])$
of the parabolic PDE \ref{eq:parabolic pde general setting} with
initial condition $u_{0}$ and such that $u_{t}\in\mathcal{H}$ for
any $t\in[0,T].$ Denote by $u_{m}^{(k)}$ the iteration \ref{eq:scaled iteration}
defined by the data $(\mu^{(k)},\nu^{(k)},c)$ and such that $u_{0}^{(k)}=u_{0}$
for any given $k.$ Then there exists a constant $C$ such that for
any $(m,k)$ satisfying $m/k\in[0,T]$ 
\[
\sup_{T^{n}}\left|u_{m}^{(k)}-u_{m/k}\right|\leq C\frac{m}{k}k^{-1},
\]
\end{thm}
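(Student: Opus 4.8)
The plan is to deduce Theorem~\ref{thm:dynamic general} from the abstract convergence Lemma~\ref{lem:general conv towards parab}, exactly as was done in the torus setting. The only thing that requires genuine work is verifying the hypothesis \ref{eq:conv of D k} of that lemma, i.e.\ that the rescaled one-step operator $D^{(k)}$ of the Sinkhorn iteration converges, on the class $\mathcal{H}$, to the right-hand side of the parabolic PDE \ref{eq:parabolic pde general setting}. Concretely, writing the iteration as $u_{m+1}^{(k)}-u_m^{(k)}=k^{-1}D^{(k)}(u_m^{(k)})$ with $D^{(k)}(u)=\log\rho_{ku}$, I must show that for every $u\in\mathcal{H}$
\[
\rho_{ku}(x)=\det(\partial_x F_u)(x)\,e^{f(x)-g(F_u(x))}\bigl(1+O(k^{-1})\bigr),
\]
with the implied constant controlled by the $C^{4}$-norm of $u$, the $C^{2}$-norms of $f,g$, and a lower bound on the relevant Hessian. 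This is the manifold analogue of Proposition~\ref{prop:asympt of pho}, and it is where the local density property of order $s\ge 2$ and assumptions A1, A2 enter.

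First I would establish this asymptotic. Fix $u\in\mathcal{H}$ and recall $\rho_{ku}(x)=\int_Y \frac{e^{-k(c(x,y)+u(x))}}{\int_X e^{-k(c(x',y)+u(x'))}\mu^{(k)}(x')}\,\nu^{(k)}(y)$. Since $u\in\mathcal{H}$, for each $y$ near $F_u(x_0)$ the function $x\mapsto c(x,y)+u(x)$ has a unique nondegenerate minimum at $x=F_u^{-1}(y)$ (this uses A1 to identify the critical point via \ref{eq:eq for F u} and A2 together with $\nabla^2_x c+\nabla^2 u>0$ — encoded in the definition of $\mathcal{H}$ via $\Gamma_u$ being a graph — for nondegeneracy); and the Hessian at the minimum is $\partial_x^2 c + \partial_x^2 u$, whose determinant is related to $\det(\partial_x F_u)$ by the differentiated relation \ref{eq:form for det partial F}. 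So a Laplace/stationary-phase expansion in the $x'$-integral in the denominator gives the leading term $(2\pi/k)^{n/2}e^{-ku^c(y)}\,e^{-f(F_u^{-1}(y))}/\sqrt{\det(\partial_x^2 c+\partial_x^2 u)}$ up to a factor $1+O(k^{-1})$. The point of working with measures satisfying the local density property of order $s\ge2$ (rather than the exact continuous measure) is precisely that it licenses replacing $\mu^{(k)}$ by $\mu$ inside this localized Gaussian integral with an $O(k^{-1})$ error: after rescaling coordinates by $k^{1/2}$ around the minimum, the integrand becomes a Gaussian-type bump $h_k$ of the shape allowed in Definition~\ref{def:local dens}, and the $k^{-s/2}$-estimate there, with $s\ge2$, controls the discretization error by $O(k^{-1})$. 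Substituting this expansion back into $\rho_{ku}$ reduces the remaining $y$-integral over $\nu^{(k)}$ to another localized Laplace integral, this time concentrated at $y=F_u(x)$, and a second application of stationary phase plus the local density property produces the factor $e^{-g(F_u(x))}$, the Jacobian factor, and an overall $1+O(k^{-1})$. Taking logarithms yields $D^{(k)}(u)=D(u)+O(k^{-1})$ with the asserted dependence of the constant, so $\epsilon_k=O(k^{-1})$ in \ref{eq:conv of D k}.

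Second, I would note that the remaining hypotheses of Lemma~\ref{lem:general conv towards parab} hold essentially for free: the rescaled operator $S^{(k)}=I+k^{-1}D^{(k)}$ is a composition of the order-preserving, $\mathbb{R}$-equivariant maps $T_{\mu^{(k)}}^{(k)}$ and $T_{\nu^{(k)}}^{(k)}$, hence order-preserving and descends to $C(X)/\mathbb{R}$; the solution $u_t$ of \ref{eq:parabolic pde general setting} is assumed given in $C^2(X\times[0,T])$ with $u_t\in\mathcal{H}=\mathcal{H}(X)$, so $D(u_t)$ is defined for all $t\in[0,T]$ and $\sup_{X\times[0,T]}|\partial_t^2 u_t|<\infty$; and $u_0^{(k)}=u_0\in\mathcal{H}$. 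Lemma~\ref{lem:general conv towards parab} then gives directly
\[
\sup_{X}\bigl|u_m^{(k)}-u_{m/k}\bigr|\le C\,\frac{m}{k}\,\max\{\epsilon_k,k^{-1}\}=C\,\frac{m}{k}\,k^{-1}
\]
for all $(m,k)$ with $m/k\in[0,T]$, which is the claim. One subtlety to address is uniformity of the constant $C$ in the stationary-phase estimate as $t$ ranges over $[0,T]$: since $u_t$ varies continuously in $C^4$ (by the assumed regularity plus parabolic bootstrapping, or simply as a hypothesis) on the compact interval $[0,T]$, the $C^4$-norms of $u_t$ and the lower bounds on $\partial_x^2 c+\partial_x^2 u_t$ are uniformly controlled, so $\sup_{t\in[0,T]}C_{u_t}<\infty$ and $C_T$ in Lemma~\ref{lem:general conv towards parab} is finite.

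The main obstacle is the manifold version of the stationary-phase/Laplace estimate: unlike the torus case, where $c=|x-y|^2/2$ makes the Hessian literally the identity and the geometry flat, here one must (i) justify the localization of both integrals onto small neighborhoods of $F_u^{-1}(y)$ and $F_u(x)$ using only $c$-convexity and the openness of $\mathcal{H}$, (ii) carry out the expansion in general local coordinates and check the leading coefficient assembles into the coordinate-invariant quantity $\det(\partial_x F_u)\,e^{f-g\circ F_u}$ using \ref{eq:form for det partial F}, and (iii) interchange the order in which the two Laplace expansions are performed while keeping all error terms of size $O(k^{-1})$ with constants depending only on the stated norms — in particular bounding the remainder $R_k$ coming from the first expansion by a second Laplace estimate, exactly as in the proof of Proposition~\ref{prop:asympt of pho}. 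Once this asymptotic is in hand with explicit constants, the rest is a mechanical invocation of the abstract lemma.
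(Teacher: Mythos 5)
Your proposal is correct and follows essentially the same route as the paper: a discrete stationary-phase expansion in local coordinates (with the local density property of order $s\geq 2$ supplying the $O(k^{-1})$ discretization error), applied twice to obtain $\rho_{ku}=\det(\partial_{x}F_{u})\,e^{f-g\circ F_{u}}(1+O(k^{-1}))$ via the relation \ref{eq:form for det partial F} and $F_{u^{c}}=(F_{u})^{-1}$, followed by an invocation of Lemma \ref{lem:general conv towards parab}. The points you flag as requiring care (localization, coordinate invariance of the leading term, uniformity of constants over $t\in[0,T]$) are exactly the ones the paper handles in Steps 1--2 of its proof.
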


\begin{proof}
The assumptions have been made precisely to ensure that the proof
of Theorem \ref{thm:conv in dynamic torus setting intro} can be generalized,
almost verbatim. Hence, we will be rather brief.

\emph{Step 1: }Let $\alpha$ be a lower-semicontinuous (lsc) function
on $X$ with a unique minimum at $x_{0}$ and assume that $\alpha$
is $C^{4}-$smooth on a neighborhood of $x_{0}$ and $\nabla^{2}\alpha(x_{0})>0$
(in local coordinates centered at $x_{0}).$\emph{ }Then, in local
coordinates centered at $x_{0},$
\begin{equation}
k^{n/2}\int_{X}e^{-k\alpha}h\mu^{(k)}=(2\pi)^{n/2}e^{-k\alpha(x_{0})}\frac{h(x_{0})e^{-f(x_{0})}}{\sqrt{\det(\partial_{x}^{2}\alpha(x_{0}))}}(1+O(k^{-1})).\label{eq:diskrete Laplace general}
\end{equation}
Using the local density assumption as a replacement of Lemma \ref{lem:local dens prop in model case},
this is shown essentially as before. The point is that the derivatives
$\partial^{\alpha}$ of any fixed order of $h_{k}(x):=\left((F_{x_{0}}^{(k)})^{-1}\right)^{*}\left(fe^{-k\alpha}\right)$
are bounded by $Ce^{-|x|^{2}/C}$ when $|x_{i}|\leq k^{1/2}$ (and
in particular, for $|x_{i}|\leq2\log k).$ This follows readily from
the chain rule, as before. In fact, the proof of formula \ref{eq:diskrete Laplace general}
is even a bit simpler than in the torus case, since the last step
3 is not needed when the local density assumption is made at any point
$x_{0}.$ 

\emph{Step 2:} If $u\in\mathcal{H},$ then the following asymptotics
holds 
\[
\rho_{ku}(x)=\det(\partial_{x}F_{u}(x))e^{f(x)-g(F_{u}(x))}(1+O(k^{-1}))
\]

To prove this first observe that if $u\in\mathcal{H}(X),$ then $u^{c}\in\mathcal{H}(Y).$
Indeed, by assumption there is a unique $x(=x_{y})$ such that $y=F_{u}(x).$
Moreover, by the very definition of $F_{u}$ (Definition\ref{def:The-space H and F})
we can express 
\[
u^{c}(y)=-c(x_{y},F_{u}(x_{y}))-u(x_{y}).
\]
 Since $c$ is assumed to be smooth in a neighborhood of $\Gamma_{u}$
the right hand side above defines a smooth function in $x$ and since
$F$ is a diffeomorphism it follows that $u^{c}(y)$ is smooth. Moreover,
by symmetry $\Gamma_{u^{c}}=\Gamma_{u},$ which can be identified
with the graph of the diffeomorphism $F_{u}^{-1}.$ This shows that
$u^{c}\in\mathcal{H}(Y)$ and 

\begin{equation}
F_{u^{c}}=(F_{u})^{-1}\label{eq:F u inverse}
\end{equation}
Setting $y_{x}:=F_{u}(x)$ and $x_{y}=F_{u^{*}}(y)$ we can now apply
the previous step, essentially as before, to get 
\[
\rho_{ku}(x)=\sqrt{\frac{\det\left((\partial_{x}^{2}c)(x_{y},x)+\partial_{x}^{2}u(x_{y})\right)}{\det\left((\partial_{y}^{2}c)(x,y_{x})+\partial_{y}^{2}u^{c}(y_{x})\right)}}e^{f(x)-g(F_{u}(x))}(1+O(k^{-1}))
\]
Finally, differentiating the relation \ref{eq:F u inverse} reveals
that 
\[
\det((\partial_{y}F_{u^{c}})(y_{x}))=\det((F_{u}(x))^{-1}
\]
and hence using equation \ref{eq:form for det partial F} and symmetry
(which ensures that the denominator appearing in equation \ref{eq:form for det partial F}
coincides with the one appearing obtained when $u$ is replaced by
$u^{c}$) concludes the proof of Step 2.

\emph{Step 3: }Conclusion of proof

The proof is concluded, as before, by invoking Lemma \ref{lem:general conv towards parab}. 
\end{proof}
As pointed out in \cite{k-s-w} it follows from standard short-time
existence results for parabolic PDEs that the existence of a solution
$u_{t}$ as in the previous theorem holds for some $T>0$ (see, for
example, \cite[Main Thm 1]{ba}). Moreover, by \cite{k-s-w} long-time
existence, i.e. $T=\infty,$ holds under the following further assumptions
on $c:$ 
\begin{itemize}
\item \emph{(A3) (``Stay-away property'') }For any $0<\lambda_{1},\lambda_{2}$
there exists $\epsilon>0$ only depending on $\lambda_{1},\lambda_{2}$
such that $\lambda_{1}\leq\text{|\ensuremath{\det}}\partial_{x}F_{u}|\leq\lambda_{2}\implies\text{dist \ensuremath{(\Gamma_{u},\text{\ensuremath{\text{sing \ensuremath{(c))\geq\epsilon}}}}}}$
for any $u\in\mathcal{H}$
\item (A4) (``Semi-concavity'') $c$ is locally semi-concave, i.e. the
sum of a concave and a smooth function on the domain where it is finite.
\item (A5) (``Strong MAW-condition'') The Ma-Wang-Trudinger tensor of
$c$ is bounded from below on $X\times Y-\text{sing \ensuremath{(c)}}$
by a uniform positive constant $\delta.$
\end{itemize}
\begin{thm}
\label{thm:(Kim-Streets-Warren-)-Assume}(Kim-Streets-Warren \cite{k-s-w})
Assume that $c$ satisfies the assumptions A1-A5. Then, for any given
$u_{0}\in\mathcal{H}$ there exists a solution $u(x,t)$ in $C^{\infty}(X\times[0,\infty[)$
of the parabolic PDE \ref{eq:parabolic pde general setting} with
initial condition $u_{0}$ and such that $u_{t}\in\mathcal{H}$ for
any $t>0.$ Moreover, $u_{t}$ converges exponentially in $C^{0}(X)),$
as $t\rightarrow\infty,$ to a potential $u\in\mathcal{H}$ of a diffeomorphism
$F_{u}$ transporting $\text{\ensuremath{\mu\ }to \ensuremath{\nu} , }$which
is optimal wrt the cost function $c.$
\end{thm}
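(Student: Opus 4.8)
This statement is a repackaging of the main results of \cite{k-s-w}, so the plan is to recall the structure of their argument rather than reprove it. Short-time existence of a smooth solution $u_t\in\mathcal{H}$ is immediate: linearising the right-hand side of \ref{eq:parabolic pde general setting} at $u_0$ and using \ref{eq:form for det partial F}, the principal part is the second order operator $\dot u\mapsto \mathrm{tr}\big(M(x)\,\partial_x^2\dot u\big)$ with $M(x)=\big((\partial_x^2 c)(x,F_{u_0}(x))+\partial_x^2 u_0(x)\big)^{-1}$, which is positive definite near $\Gamma_{u_0}$ since $u_0\in\mathcal{H}$. Hence the flow is non-degenerately parabolic and standard parabolic theory produces a smooth solution on a maximal interval $[0,T^*)$, with $u_t\in\mathcal{H}$ for as long as it exists, using that $\mathcal{H}$ is open in $C^\infty$ together with the twist condition A1 and the stay-away property A3.

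To upgrade to $T^*=\infty$ one needs a priori estimates uniform in $t$, following the standard hierarchy for fully nonlinear parabolic equations. First a uniform bound on $\mathrm{osc}_X u_t$: the right-hand side of \ref{eq:parabolic pde general setting} is the log-density $\log\big((F_{u_t})_*\mu/\nu\big)$, and combining the maximum principle applied to $\partial_t u_t$ with sub/super-barriers built using the semi-concavity A4 controls the oscillation independently of $t$. A uniform $C^1$ bound then follows from the structure of $c$, with A3 ensuring that $\Gamma_{u_t}$ stays a definite distance from $\mathrm{sing}(c)$. The crucial step is the interior $C^2$ estimate: differentiating the equation twice and applying the parabolic maximum principle to the largest eigenvalue of $\partial_x^2 u_t+(\partial_x^2 c)(x,F_{u_t}(x))$, the dangerous third order terms are exactly the ones controlled by the Ma--Trudinger--Wang tensor of $c$, and the strong MAW condition A5 gives them a favourable sign, yielding a uniform bound. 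This is the parabolic analogue of the Ma--Trudinger--Wang/Loeper regularity theory for the elliptic optimal transport Monge--Amp\`ere equation. Once $u_t$ is bounded in $C^2$ uniformly in $t$, the flow is uniformly parabolic with concave (in the Hessian) structure, so parabolic Krylov--Safonov theory gives a uniform $C^{2,\alpha}$ bound and Schauder bootstrapping a uniform $C^\infty(X\times[0,\infty))$ bound; long-time existence follows.

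For convergence as $t\to\infty$: uniform $C^p$ bounds for all $p$ make $\{u_t\}$ precompact in every $C^p$, so along any sequence $t_j\to\infty$ a subsequence converges in $C^\infty$ to a stationary solution, which by the first remark following \ref{eq:parabolic pde general setting} is the potential of an optimal transport map $F_u$; uniqueness of the optimal potential modulo additive constants (from A1--A2 and $\nu$ being a volume form of full support) pins down all subsequential limits to a single $u_\infty$. The exponential rate is obtained, as in \cite{ca} in the Ricci-flow case, from a differential (Li--Yau type) Harnack inequality for $\partial_t u_t$ established in \cite{k-s-w}: it produces a lower bound on $\partial_t u_t$ decaying like $e^{-t/A}$, hence $\|\partial_t u_t\|_{C^0}\le A e^{-t/A}$, and re-bootstrapping through the linearised equation upgrades this to $\|u_t-u_\infty\|_{C^p}\le A e^{-t/A}$.

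The genuine obstacle is the uniform second order estimate: this is where the geometric hypothesis A5 on $c$ is truly needed and is the technical heart of \cite{k-s-w}; the rest is, modulo careful bookkeeping near $\mathrm{sing}(c)$ via A3--A4, routine parabolic machinery. For the purposes of the present paper we simply invoke \cite{k-s-w} for this input.
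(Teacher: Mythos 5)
Your overall strategy --- invoke \cite{k-s-w} for the analytic content --- is the same as the paper's, and your recapitulation of the inside of that proof (short-time existence by non-degenerate parabolicity, oscillation/$C^{1}$/$C^{2}$ estimates with A5 giving the Ma--Trudinger--Wang terms a good sign, Krylov--Safonov plus Schauder, and a Harnack inequality for the exponential rate) is a fair summary of \cite{k-s-w}, which the paper makes no attempt to redo. The gap is at the one point where the paper's proof does actual work: Theorem \ref{thm:(Kim-Streets-Warren-)-Assume} is phrased in terms of the class $\mathcal{H}$ (smooth, globally $c$-convex $u$ for which $\Gamma_{u}$ in \ref{eq:def of Gamma u} is the graph of a diffeomorphism $F_{u}$, with $c$ smooth near $\Gamma_{u}$), whereas the main result of \cite{k-s-w} is stated for \emph{locally strictly $c$-convex} functions, i.e.\ those for which the matrix $(\partial_{x}^{2}c)(x,F_{u}(x))+(\partial_{x}^{2}u)(x)$ is positive definite, and it produces a solution remaining only in that local class. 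To get the statement as written one must identify the two classes: the easy direction is that $u\in\mathcal{H}$ is locally strictly $c$-convex (every $x_{0}$ is a non-degenerate absolute minimum of $x\mapsto c(x,F_{u}(x_{0}))+u(x)$); the non-trivial direction, which is the entire substance of the paper's proof, is that local strict $c$-convexity forces $(u^{c})^{c}=u$ and forces $F_{u}$ to be a \emph{global} diffeomorphism --- this is \cite[Cor 7.1]{k-s-w} and the argument given there.

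Your proposal replaces this bridge by the assertion that $u_{t}\in\mathcal{H}$ persists ``because $\mathcal{H}$ is open in $C^{\infty}$'' together with A1 and A3. Openness only gives persistence for a short time: a priori the smooth flow could exit $\mathcal{H}$ before the maximal time (say $F_{u_{t}}$ stops being injective, or global $c$-convexity fails) while staying smooth and locally strictly $c$-convex, and nothing in your sketch excludes this; what the flow actually preserves, via the second-order estimates, is positive definiteness of $(\partial_{x}^{2}c)(x,F_{u_{t}}(x))+\partial_{x}^{2}u_{t}$, not membership in $\mathcal{H}$ as defined here. So either quote \cite{k-s-w} in their own terms and then add the local-to-global equivalence via \cite[Cor 7.1]{k-s-w}, or argue directly that local strict $c$-convexity implies $u\in\mathcal{H}$; as it stands, invoking \cite{k-s-w} does not literally yield the theorem in the form stated, and that missing translation is precisely what the paper's proof supplies.
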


\begin{proof}
Let us explain how to translate the result in \cite{k-s-w} to the
present setting. Following \cite{k-s-w} a function $u\in C^{2}(X)$
is said to be \emph{locally strictly $c-$convex, }if, in local coordinates,
the matrix $(\partial_{x}^{2}c)(x,F_{u}(x))+(\partial_{x}^{2}u)(x)$
is positive definite. This condition is independent of the choice
of local coordinates. Indeed, it equivalently means that any given
$x_{0}\in X$ is a non-degenerate local minimum for the function 
\begin{equation}
x\mapsto c(x,F(x_{0}))+u(x)\,\,\,\text{on\,}X.\label{eq:function that x not min}
\end{equation}
It follows that for any such $u$ the corresponding map $F_{u}$ is
a local diffeomorphism. The main result in \cite{k-s-w} says that,
under the assumptions on $c$ in the statement above, for any initial
datum $u_{0}\in C^{\infty}(X)$ which is\emph{ }locally strictly $c-$convex,\emph{
}there exists a solution $u(x,t)$ in $C^{\infty}(X\times]0,T])$
which is also locally strictly $c-$convex. To make the connection
to the present setting first note that if $u\in\mathcal{H}$ then
$u_{0}$ is even an absolute minimum for the function \ref{eq:function that x not min},
which is non-degenerate (since $F_{u}$ is a diffeomorphism) and hence
$u$ is locally strictly $c-$convex. Conversely, if $u$ is locally
strictly $c-$convex then \cite[Cor 7.1]{k-s-w} says that $u\in C^{2}(X)$
is $c-$convex (i.e. $(u^{c})^{c}=u)$ and the proof given in \cite[Cor 7.1]{k-s-w}
moreover shows that $F_{u}$ is a global $C^{1}-$diffeomorphism.
It then follows from the Assumption A3 that $c$ is smooth in a neighborhood
of $\Gamma_{u}.$ Hence, $u\in C^{\infty}(X)$ is locally strictly
$c-$convex iff $u\in\mathcal{H},$ which concludes the proof of the
theorem. 
\end{proof}
\begin{rem}
Under the assumptions in the previous theorem it follows, in particular,
that the optimal transport map is smooth. Conversely, the assumptions
are ``almost necessary'' for regularity of the optimal transport
map (see \cite[Chapter 12]{v2} and reference therein). Also note
that the semi-concavity assumption is always satisfied in the case
when $X=Y$ is a compact Riemannian manifold and $c=d^{2}/2$ \cite[(b), Page 278]{v2}. 
\end{rem}

Combining the exponential large-time convergence of $u_{t},$ in the
previous theorem, with Theorem \ref{thm:dynamic general} gives, just
as in the torus setting, the following 
\begin{cor}
\label{cor:constr of approx in quite general setting }Assume that
the cost function $c$ satisfies the assumptions in Theorem \ref{thm:dynamic general}
(or that $c=d_{T^{n}}^{2}/2$ in the case of the torus $T^{n}$).
Assume, moreover, that $\mu^{(k)}$ and $\nu^{(k)}$ satisfy the local
density property. Then, for any given $u_{0}\in\mathcal{H},$ there
exists a positive constant$A_{0}$ such that for any $A>A_{0}$ the
following holds: $u_{k}(x):=u_{m_{k}}^{(k)}(x_{i_{k}}),$ with $m_{k}:=\left\lfloor Ak\log k\right\rfloor ,$
converges uniformly to the optimal transport potential $u(x)$. More
precisely, there exists a constant $C$ (depending on $A)$ such that
\[
\sup_{T^{n}}\left|u_{k}-u\right|\leq Ck^{-1}\log k
\]
\end{cor}

In turn, this corollary implies, just as before, that the analog of
Corollary \ref{cor:cons transport cost} holds (under the same assumptions
as in the previous corollary).
\begin{example}
\label{exa:The-assumptions-in}The assumptions on $c$ in the previous
corollary are satisfied when $X=Y$ is the $n-$sphere and $c(x,y)=d^{2}(x,y)/2$
for the standard round metric or $c(x,y)=-\log|x-y|,$ where $|x-y|$
denotes the chordal distance (see \cite{k-s-w} and references therein).
The latter case appears in the reflector antenna problem, as explained
in Section \ref{subsec:Application-to-the antenna}. 
\end{example}

\subsection{Constructing discretizations using Quasi-Monte Carlo systems}

In order to construct discretizations satisfying the local density
property (Definition \ref{def:local dens}) on general compact manifolds
we will employ Quasi-Monte Carlo systems, familiar from the theory
of numerical integration. These can be viewed as generalizations of
the standard grids with $N$ points on the torus.

Let $(X,g)$ be an $n-$dimensional compact Riemannian manifold and
denote by $dV$ the corresponding normalized volume form on $X.$
Following \cite{b-e-g} (and \cite{b-s-s-w} in the case of a sphere)
the \emph{worst case error }of integration of points $\{x_{i}\}_{i=1}^{N}\Subset X$
and weights $\{w_{i}\}_{i=1}^{N}\Subset\R_{+}$ (assumed to sum to
one) with respect to some Banach space $W$ of continuous functions
on $X,$ is defined as 
\[
\text{wce \ensuremath{(\left\{ (x_{i},w_{i})\right\} _{i=1}^{N})}:=\ensuremath{\sup}}\left\{ \left|\int fdV-\sum_{i=1}^{N}f(x_{i})w_{i}\right|:\,\,f\in W,\,\,\left\Vert f\right\Vert \leq1\right\} 
\]
We will use the shorthand $X_{N}$ for the \emph{weighted point cloud}
$\left\{ (x_{i},w_{i})\right\} _{i=1}^{N}$ and and call the corresponding
discrete probability measure 
\begin{equation}
\lambda_{X_{N}}:=\sum_{i=1}^{N}w_{i}\delta_{x_{i}}\label{eq:def of sampling measure}
\end{equation}
for the \emph{sampling measure} associated to $X_{N}.$ Let now $W:=W_{p}^{s}(X)$
be the Sobolev space of all functions $f$ on $X$ such that all (fractional)
distributional derivatives of order $s$ are in $L^{p}(X)$ and assume
that $s>n/p$ and $p\in[1,\infty]$ (which ensures that $W_{p}^{s}(X)\subset C^{0}(X)).$
Then a sequence of weighted point clouds\emph{ }$X_{N}:=\left\{ (x_{i},w_{i})\right\} _{i=1}^{N}$
is said to be a\emph{ quasi-Monte Carlo system }for $W_{p}^{s}(X)$
if 
\[
\text{wce \ensuremath{(X_{N})\leq\frac{C}{\left(N^{1/n}\right)^{s}}} }
\]
for some uniform constant $C$ (the corresponding lower bound holds
for any sequence $X_{N}).$ In view of the applications to the present
setting we introduce the following definition (modeled on the corresponding
definition for $p=2$ in \cite{b-s-s-w}):
\begin{defn}
Given $p\in[1,\infty],$ a sequence of weighted point clouds $X_{N}:=\left\{ (x_{i},w_{i})\right\} _{i=1}^{N}$
is said to be a\emph{ generic Quasi-Monte Carlo $p-$system }if $X_{N}$
is a quasi-Monte Carlo system for $W_{p}^{s}(X)$ for any positive
integer $s>n/p.$ Moreover, $X_{N}$ is a \emph{completely generic
QMC system} if it is a\emph{ }generic Quasi-Monte Carlo $p-$system
for any $p\in[1,\infty].$ 
\end{defn}

\begin{example}
\label{exa:grid is qmc}When $M=T^{n}$ the standard grid with $N$
points on $T^{n}$ (i.e. with ``edge length'' $1/N^{1/n}))$ and
with all weights taken to be equal to $1/N$ defines a completely
generic Quasi-Monte Carlo system $X_{N}$ \cite{b-c-c-g-s-t}. Moreover,
in the case of the standard $n-$dimensional sphere $S^{n}$ it follows
from \cite{b-r-v,b-s-s-w} that there exists a sequence of completely\emph{
}generic quasi-Monte Carlo system with all weights equal to $1/N.$
The corresponding point sets are taken as \emph{spherical $t-$designs}
with $t\sim N^{1/n}$ (these are defined as quadrature points, discussed
in Section \ref{subsec:Constructing-completely-generic} below, with
all weights equal). Such points have been generated for large values
of $N$ \cite{wo}. But allowing different weights has the advantage
that \emph{explicit} completely generic QMC systems can be constructed,
as discussed below.
\end{example}

\begin{rem}
\label{rem:cover rad}Recall that the covering radius $\rho_{N}$
of a point cloud $\{x_{1},...,x_{N}\}$ on $(X,g)$ is defined by
$\sup_{x\in X}\min_{i\leq N}d(x,x_{i}).$ By \cite[Prop 4.1, 4.3]{b-e-g},
if $X_{N}$ is a weighted generic QMC $1-$system then the corresponding
covering radius $\rho_{N}$ is comparable to $N^{-1/n}.$
\end{rem}

In order to apply this setup to the present setting of the Sinkhorn
iteration we will need to adapt the number $N$ of points to the value
of the parameter $k$ (defined as the invers of the entropic regularization
parameter). This is made precise by the following lemma, quantifying
how large $N(=N_{k})$ needs to be:
\begin{lem}
Let $s$ be a positive integer. Assume that $X_{N_{k}}:=\left\{ (x_{i}^{(k)},w_{i}^{(k)})\right\} _{i=1}^{N_{k}}$
is a sequence of quasi-Monte Carlo systems for $W_{p}^{s}(X)$ (where
$s>n/p)$ indexed by a parameter $k.$ Then the corresponding \emph{sampling
measures} $\lambda_{k}(:=\lambda_{N_{k}})$ have the local density
property at order $s$ at length scales $k^{-1/2}$ (Definition \ref{def:local dens})
if the ``number condition'' 
\begin{equation}
N_{k}^{-1/n}\leq\frac{C}{k^{1/2}}\frac{1}{k^{\left(1+\frac{n}{2}(1-1/p)\right)/s}}\label{eq:number cond}
\end{equation}
holds for some positive constant $C.$ 
\end{lem}

\begin{proof}
Given a sequence $h_{k}$ as in Definition \ref{def:local dens} set
$f_{k}:=(F_{x_{0}}^{(k)})^{*}(\chi_{k}h_{k}),$ where $\chi_{k}(x):=\chi(x/\log k)$
for a given smooth function $\chi$ in $\R^{n}$ equal to one on the
polydisc $D_{1}$ centered at $0$ and with support in $2D_{1}.$
By the assumed quasi-Monte Carlo property there exists a constant
$C$ such that 
\[
\left|\int_{X}f_{k}(\lambda_{k}-dV)\right|\leq C\frac{1}{N_{k}^{s/n}}\sum_{|\alpha|\leq s}\left(\int|\partial^{\alpha}f_{k}|^{p}dV\right)^{1/p}.
\]
Multiplying both sides with $k^{n/2}$ and using the chain rule thus
gives 
\[
k^{n/2}\left|\int_{D_{k}}h_{k}(F_{x_{0}}^{(k)})_{*}(\lambda_{k}-dV)\right|\leq k^{n/2}\left|\int_{2D_{k}}\chi_{k}h_{k}(F_{x_{0}}^{(k)})_{*}(\lambda_{k}-dV)\right|=
\]

\[
\frac{Ck^{n/2}}{N_{k}^{s/n}}\left(\sum_{|\alpha|\leq s}k^{|\alpha|/2}\int_{D_{2k}}|\partial^{\alpha}(\chi_{k}h_{k})|^{p}(F_{x_{0}}^{(k)})_{*}dV\right)^{1/p}\leq C'\frac{k^{n/2}k^{-n/2p}}{N_{k}^{s/n}}k^{s/2}\sum_{|\alpha|\leq s}\left(\int_{D_{2k}}|\partial^{\alpha}(\chi_{k}h_{k})|^{p}dx\right)^{1/p}.
\]
Now, by assumption, all $|\partial^{\alpha}h_{k}|(x)\leq Ce^{-|x|^{2}/C}$
on $D_{2k}.$ Moreover, $|\partial^{\alpha}\chi_{k}|$ is uniformly
bounded (since $\partial^{\alpha}\chi_{k}=\partial\chi/(\log k)^{|\alpha|}$)
and hence the sum in the right hand side above is uniformly bounded.
Since the number condition ensures that the factor in front of the
sum is bounded by a constant times $k^{-1}$ this concludes the proof. 
\end{proof}
\begin{rem}
When $X=T^{1}$ and $s=2$ and $p=1$ (which satisfies $s>n/p)$ the
previous lemma is closely related to the case $n=1$ of Lemma \ref{lem:local dens prop in model case}
(recall that the case $X=T^{n}$ can be reduced to the case when $n=1).$ 
\end{rem}

\subsubsection{\label{subsec:Constructing-completely-generic}Constructing completely
generic QMC systems}

Finally, we recall how to construct completely generic QMC systems
on any $n-$dimensional compact Riemannian manifold $(X,g),$ generalizing
the standard grid on the torus, following \cite{b-c-c-g-s-t}. Given
a positive number $W$ (the ``bandwidth'') we denote by $H_{\leq W}(X,g)$
the finite dimensional subspace of $C^{\infty}(X)$ consisting of
all eigenfunctions of the Laplacian with eigenvalue bounded from above
by $W^{2}$. A weighted point cloud $X_{N}$ is said to consist of
\emph{weighted quadrature points} for $H_{\leq W}(X,g)$) if the corresponding
numerical integration error vanishes for any $f\in H_{\leq W}(X).$
For any sufficently large constant $C_{X}$ there exists a sequence
of weighted quadrature points $X_{N}$ for $H_{\leq C_{X}N^{1/n}}(X)$
(as follows from \cite[Cor 2.11]{b-c-c-g-s-t} and the Weyl asymptotics,
saying that the dimension of $H_{\leq W}(X)$ grows like a constant
times $W^{n}).$ Moreover, by \cite[Cor 2.13]{b-c-c-g-s-t} such a
sequence $X_{N}$ defines a completely generic QMC system. 
\begin{example}
\label{exa:the two sphere}When $(X,g)$ is the round two-sphere $S^{2}$
it is customary to rewrite $W^{2}=l(l+1).$ Letting $l$ range over
all non-negative integers then enumerates all eigenvalues of the Laplacian
and the corresponding space $H_{\leq W}(X)$ then coincides with the
space of all spherical polynomials of degree $l$ (see Section \ref{subsec:Optimal-transport-on two sph}).
In this case the most commonly used explicit weighted quadrature points
are obtained using various longitude-latitude rules; see \cite[Section 4.1]{h-s-w}
and also \cite[Thm 3]{d-h} (applied to $(l,m)=(0,0)$) for the ``equi-angular''
case. 
\end{example}

\subsection{\label{subsec:Exploiting-higher-regularity}Reducing the numbers
$N_{k}$ of discretization points by exploiting higher regularity
of the data}

Coming back to the setup in the beginning of Section \ref{sec:Generalizations-and-outlook}
we fix two sequences $X_{N}$ and $Y_{N}$ of $N$ weighted points
on $X$ and $Y,$ respectively. We will assume that $X_{N}$ and $Y_{N}$
are completely generic QMC systems defined with respect to some Riemannian
volume forms $dV_{X}$ and $dV_{Y}$ on $X$ a $Y,$ respectively.
Fix $\delta\in]0,1/2]$ and index the number of points $N$ by the
parameter $k$ so that 
\begin{equation}
N_{k}^{1/n}\geq C_{\delta}k^{1/2+\delta},\,\,\,\delta>0,\,\,C_{\delta}>0\label{eq:lower bound on number in section higher}
\end{equation}
(by Remark \ref{rem:cover rad} this equivalently means that the covering
radius of the corresponding point clouds is of the order $O(1/k^{1/2+\delta}$).
We then define a family of ``discretizations'' $\mu^{(k)}$ and
$\nu^{(k)}$ of $\mu$ and $\nu,$ by proceeding as in the torus case,
but replacing the sampling measure $\delta_{\Lambda_{k}}$ with the
sampling measures corresponding to $X_{N}$ and $Y_{N}.$ In other
words, setting $\rho_{X}:=\mu/dV_{X}$ the discrete probability measure
$\mu^{(k)}$ is defined as 
\[
\mu^{(k)}:=\rho_{X}\lambda_{X_{N_{k}}}/Z_{N_{k}},\,\,\,
\]
where $Z_{N_{k}}$ is the normalizing constant (and $\nu$ is defined
in a similar fashion). 
\begin{lem}
The sequences $\mu^{(k)}$ and $\nu^{(k)}$ have the local density
property.
\end{lem}

\begin{proof}
First observe that if follows directly from the assumptions on $X_{N}$
and $Y_{N}$ that the corresponding norming constants are equal to
$1+O(k^{-1})$ (in fact, the error terms are of the order $O(k^{-\infty})$
since the densities are assumed to be smooth). Hence, it will be enough
to show that local density property holds for the sequences $\rho_{X}\lambda_{X_{N_{k}}}$
and $\rho_{Y}\lambda_{Y_{N_{k}}}.$ But then the result is reduced
to Lemma \ref{lem:dens prop} by simply replacing $h_{k}$ with $\tilde{h}_{k}:=h_{k}(F_{x_{0}}^{(k)^{-1}})^{*}\rho,$
where $\rho$ is the density of $\mu$ or $\nu.$ Indeed, by Leibniz
rule and the chain rule $\tilde{h}_{k}$ also satisfies the estimates
in the lemma. Thus taking $s=1/\delta$ and $p=1$ and invoking Lemma
\ref{lem:dens prop} concludes the proof.
\end{proof}
This means that Theorem \ref{thm:dynamic general} and Corollary \ref{cor:constr of approx in quite general setting }
apply to the discretization scheme above, as long as the number $N_{k}$
of points satisfies the bound \ref{eq:lower bound on number in section higher}
for some $\delta>0.$ 
\begin{rem}
The previous argument shows that if the densities of $\mu$ and $\nu$
are in $C^{s}(X)$ for some integer $s\in[2,\infty[$ and $s>n/p$
for $p\in[1,\infty[$ then the local density condition (at order $s)$
holds if $N_{k}$ satisfies the condition \ref{eq:number cond}. In
particular, if $s>n$ on can take $p=1$ and then the condition is
that $N_{k}^{1/n}\geq k^{1/2+1/s}.$ In the particular case of a uniform
grid on the torus the condition is thus that the edge-length of the
grid is bounded from below by $Ck^{-(1/2+1/s)}$ (the assumption $s>n$
is not needed in this case, since one can reduce to the case when
$n=1).$ 
\end{rem}

\section{\label{sec:Nearly-linear-complexity}Nearly linear complexity on
the torus and the sphere}

In this section we start by showing that the convergence results in
Section \ref{sec:Generalizations-and-outlook} hold in a more general
setting where the kernel $\mathcal{K}^{(k)}(x,y):=e^{-kc(x,y)}$ is
replaced with an appropriate approximate kernel. This extra flexibility
is then applied in the setting of optimal transport on the two-sphere,
using ``band-limited'' heat-kernels, where it leads to a nearly
linear algorithmic cost for the corresponding Sinkhorn iterations.

\subsection{Sequences $c_{k}$ and approximate kernels $K_{k}$}

Just as in the generalization of the (static) Theorem \ref{Thm:weak conv of fixed points towards optimal transport plans},
considered in Section \ref{thm:conv wasserst}, the (dynamic) Theorem
\ref{thm:dynamic general} can be generalized by replacing the cost
function $c$ with a suitable sequence $c_{k}.$ But then the uniform
convergence of $c_{k}$ towards $c$ (formula \ref{eq:unif conv of ck})
has to be supplemented with further asymptotic properties on the complement
of the singularity locus of $c.$ For example, the proof of Theorem
\ref{thm:dynamic general} goes through, almost word for word, if
the upper bound corresponding to \ref{eq:unif conv of ck} holds globally,
i.e.: 
\begin{equation}
e^{-kc_{k}(x,y)}\leq O(e^{\epsilon k})e^{-kc(x,y)}\label{eq:upper bound on minus c k}
\end{equation}
(where $O(e^{\epsilon k})$ denotes a sequence of sub-exponential
growth) and $c_{k}$ has the following further property: on any given
compact subset in the complement of $\text{sing \ensuremath{(c)}}$
there exists a strictly positive smooth function $h_{0}(x,y)$ and
a uniformly bounded sequence $r_{k}(x,y)$ of functions such that
\begin{equation}
\mathcal{K}^{(k)}(x,y):=e^{-kc_{k}(x,y)}=e^{-kc(x,y)}(h_{0}(x,y)+k^{-1}r_{k}(x,y))\label{eq:expansion c k}
\end{equation}
This implies, in particular, that if Theorem \ref{Thm:weak conv of fixed points towards optimal transport plans}
holds for a given kernel $\mathcal{K}^{(k)},$ then it also holds
for any other kernel $\tilde{\mathcal{K}}^{(k)}$ which has error
$O(k^{-1})$ as an approximation relative to $\mathcal{K}^{(k)},$
i.e. such that 
\begin{equation}
|\mathcal{K}^{(k)}-\tilde{\mathcal{K}}^{(k)}|\leq Ck^{-1}\mathcal{K}^{(k)}\label{eq:relative error of kernels}
\end{equation}
or such that $\tilde{\mathcal{K}}^{(k)}$ has absolute error $e^{-Ck},$
for $C$ sufficiently large, i.e. 
\begin{equation}
|\mathcal{K}^{(k)}-\tilde{\mathcal{K}}^{(k)}|\leq e^{-Ck},\,\,\,C>\inf_{X\times Y}c\label{eq:abs relative error}
\end{equation}

\subsection{\label{subsec:Heat-kernel-regularization}Heat kernel approximations
in the Riemannian setting}

Consider now the Riemannian setting where $X$ is a compact Riemannian
manifold (without boundary) and $c=d^{2}/2$ and $c_{k}$ is defined
in terms of heat kernel (formula \ref{eq:c k as heat}): 
\begin{thm}
\label{thm:heat kernel para}Let $X$ be a compact Riemannian manifold
(without boundary) and set $c(x,y):=d(x,y)^{2}/2,$ where $d$ is
the Riemannian distance function. Then the results in Theorem \ref{thm:dynamic general}
and Corollary \ref{cor:constr of approx in quite general setting }
still hold when the matrix kernel $e^{-kd^{2}(x,y)/2}$ is replaced
with the heat kernel $\mathcal{K}_{2k^{-1}}(x,x)$ (at time $t=2k^{-1})$
\end{thm}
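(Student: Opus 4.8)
The plan is to verify that the heat kernel $\mathcal{K}_{2k^{-1}}(x,y)$ satisfies the structural hypotheses on approximate kernels $c_k$ laid out in Section~\ref{subsec:Heat-kernel-regularization}, so that the already-proven Theorem~\ref{thm:dynamic general} and Corollary~\ref{cor:constr of approx in quite general setting } apply verbatim with $\mathcal{K}^{(k)}$ replaced by $\widetilde{\mathcal{K}}^{(k)}:=\mathcal{K}_{2k^{-1}}$. Concretely, I must produce a strictly positive smooth function $h_0(x,y)$ and a uniformly bounded sequence $r_k(x,y)$, on any compact subset away from $\mathrm{sing}(c)$ (here the cut locus, where $c=d^2/2$ fails to be smooth), such that
\[
\mathcal{K}_{2k^{-1}}(x,y)=e^{-kd(x,y)^2/2}\bigl(h_0(x,y)+k^{-1}r_k(x,y)\bigr),
\]
together with the global upper bound \ref{eq:upper bound on minus c k}, i.e. $\mathcal{K}_{2k^{-1}}(x,y)\le O(e^{\epsilon k})e^{-kd(x,y)^2/2}$.

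The main tool is the classical Minakshisundaram--Pleijel short-time asymptotic expansion of the heat kernel. Writing $t=2k^{-1}$, for $(x,y)$ in a fixed compact set disjoint from the cut locus one has
\[
\mathcal{K}_t(x,y)=(4\pi t)^{-n/2}e^{-d(x,y)^2/(4t)}\Bigl(a_0(x,y)+a_1(x,y)t+O(t^2)\Bigr),
\]
where $a_0(x,y)>0$ (it is $j(x,y)^{-1/2}$ in terms of the Jacobian of the exponential map, hence strictly positive and smooth off the cut locus) and the $a_j$ and the $O(t^2)$ remainder are uniform on the compact set. Substituting $t=2k^{-1}$ turns the Gaussian factor $e^{-d^2/(4t)}$ into exactly $e^{-kd^2/2}$, and the prefactor $(4\pi t)^{-n/2}=(8\pi/k)^{-n/2}$ together with $a_0+a_1 t+O(t^2)$ assembles into $h_0+k^{-1}r_k$ with $h_0(x,y):=(8\pi)^{-n/2}k^{n/2}\cdot$— wait, this carries a $k^{n/2}$ which is harmless because the normalization constants in the Sinkhorn scaling absorb overall $k$-powers (one checks that $h_0$ enters $\rho_{ku}$ only through ratios, exactly as in the proof of Proposition~\ref{prop:asympt of pho} where the $(2\pi)^{n/2}$ and $\det$ factors cancel between numerator and denominator); more cleanly, one absorbs the $k^{n/2}$ into the $k^{-n/2}$ already present in the discrete stationary phase Lemma~\ref{lem:discrete stat phase}. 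Either way the strict positivity of $a_0$ gives $h_0>0$ and smoothness off the cut locus, and $r_k$ is uniformly bounded there. The global bound \ref{eq:upper bound on minus c k} follows from the standard Gaussian upper heat-kernel estimate $\mathcal{K}_t(x,y)\le C t^{-n/2}e^{-d(x,y)^2/(5t)}$ (Li--Yau / Varadhan), which at $t=2k^{-1}$ reads $\le C k^{n/2}e^{-2kd^2/5}$; since $2/5<1/2$ this is $\le O(e^{\epsilon k})e^{-kd^2/2}$ uniformly, the $\epsilon$ absorbing the difference $\bigl(\tfrac12-\tfrac25\bigr)k\cdot\sup d^2$ — more precisely one just notes $e^{-2kd^2/5}=e^{-kd^2/2}e^{kd^2/10}\le e^{-kd^2/2}e^{k(\mathrm{diam}X)^2/10}$, which is of the required sub-exponential (indeed exponential but uniform) type; to be safe one can instead invoke the sharper bound with constant $\tfrac14$ replaced by any $c<\tfrac14$ in the Gaussian, giving a genuinely arbitrarily small $\epsilon$.

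With these two facts in hand, Theorem~\ref{thm:dynamic general} applies: the operator $D^{(k)}$ built from $\widetilde{\mathcal{K}}^{(k)}$ still satisfies the convergence estimate \ref{eq:conv of D k} toward the same limiting operator $D(u)=\log\det(\partial_x F_u)-g(F_u)+f$, because the heat-kernel correction $h_0+k^{-1}r_k$ contributes a factor that, upon forming $\rho_{ku}$, enters as a ratio $h_0(x,y_x)/h_0(x,y_x)\cdot(1+O(k^{-1}))=1+O(k^{-1})$ — the leading $h_0$ term cancels between the two nested stationary-phase integrations exactly as the Gaussian normalizations do in Proposition~\ref{prop:asympt of pho}. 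The monotonicity of $I+k^{-1}D^{(k)}$ and invariance under the additive $\R$-action are preserved since $\widetilde{\mathcal{K}}^{(k)}>0$. Corollary~\ref{cor:constr of approx in quite general setting } then follows by the same combination with the exponential large-time convergence of the parabolic flow, as in the conclusion of the proof of Corollary~\ref{cor:conv of explicit apprl intro}.

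The one genuine obstacle is the behavior near the cut locus $\mathrm{sing}(c)$, where the off-diagonal heat-kernel expansion breaks down (there is no single minimizing geodesic, so $a_0$ is not even well-defined). This is handled exactly as the singularity locus is handled in Theorem~\ref{thm:dynamic general} itself: for $u\in\mathcal{H}$, the graph $\Gamma_u=\Gamma_{u^c}$ stays a definite distance away from $\mathrm{sing}(c)$ (this is where assumptions A1–A3, in particular the stay-away property A3, are used), so all the stationary-phase localizations — of the integral defining $v^{(k)}[u](y)$ around $x=x_y=F_{u^c}(y)$, and then of the integral defining $\rho_{ku}(x)$ around $y=y_x=F_u(x)$ — take place in a fixed compact set $\Subset X\times Y\setminus\mathrm{sing}(c)$ on which the Minakshisundaram--Pleijel expansion is valid and uniform. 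The contribution from a neighborhood of the cut locus is suppressed by the exponential factor $e^{-kc}$ there (it is bounded by $e^{-k\delta}$ for some $\delta>0$ depending on the stay-away distance), using precisely the global upper bound \ref{eq:upper bound on minus c k} to control $\widetilde{\mathcal{K}}^{(k)}$ on that bad region. Thus the proof reduces, with no new analytic content beyond citing the standard heat-kernel asymptotics and Gaussian bound, to the already-established machinery. \qed
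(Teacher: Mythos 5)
Your proposal is correct and follows essentially the same route as the paper: cite the short-time heat-kernel expansion off the cut locus (the paper uses Ben Arous \cite{ben}, you use the equivalent Minakshisundaram--Pleijel form) to verify the structural expansion \ref{eq:expansion c k}, dispose of the $t^{-n/2}$ prefactor by noting that multiplying the kernel by a $k$-dependent constant (equivalently, shifting $c_{k}$ by a constant) leaves the iteration unchanged, and observe that the stationary-phase localizations stay in a compact set away from the cut locus. Your extra care with the global Gaussian upper bound \ref{eq:upper bound on minus c k} is welcome (the paper glosses over it), though note the arithmetic there only works via the sharp bound with exponent constant arbitrarily close to the Varadhan rate, as you yourself point out at the end of that paragraph; also, the stay-away property A3 is not needed for this step, since $u_t\in\mathcal{H}$ already guarantees that $c$ is smooth near $\Gamma_{u_t}$.
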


\begin{proof}
As discussed above this follows from the following heat kernel asymptotics
(which are a special case of \cite[Thm 3.1]{ben} and more generally
hold for the heat kernel associated to a suitable hypoelliptic operator).
Assume that $x$ and $y$ are contained in a compact subset of the
complement of the cut-locus. Then 
\[
\mathcal{K}_{t}(x,y)=t^{-n/2}e^{-t^{-1}d^{2}(x,y)/4}\left(h_{0}(x,y)+tr_{1}(t,x,y)\right),
\]
where $h_{0}$ is smooth and $h_{0}>0$ and $r_{1}$ is smooth and
uniformly bounded on $]0,t_{0}]\times X.$ This is not exactly of
the form \ref{eq:expansion c k} due to the presence of the factor
$t^{-n/2}:=A_{k}.$ But it is, in fact, enough to know that \ref{eq:expansion c k}
holds when the right hand side is multiplied with a sequence $A_{k},$
only depending on $k.$ Indeed, the iteration $u_{m}^{(k)}$ is unaltered
when the cost function $c_{k}(x,y)$ is replaced by $c_{k}(x,y)+C_{k}$
for some constant $C_{k}$ (which is consistent, as it must, with
the fact that the parabolic equation \ref{eq:parabolic pde general setting}
is unaltered when a constant is added to $c).$
\end{proof}
The use of the heat kernel in the Sinkhorn algorithm for optimal transport
on Riemannian manifolds was advocated in \cite{s-d---}, where it
was found numerically that discretized heat kernels provide substantial
speedups, when compared to other methods. The previous theorem offers
a theoretical basis for the experimental findings in \cite{s-d---},
as long as the discretized heat kernels $\tilde{\mathcal{K}}^{(k)}$
satisfy one of the the approximation properties \ref{eq:relative error of kernels}
and \ref{eq:abs relative error} (when compared with the corresponding
bona fide heat kernel). However, the author is not aware of any general
such approximation results in the discretized setting (but see \cite{c-w-w}
and references therein for various numerical approaches to discretizations
of heat kernels). We will instead follow a different route, based
on ``band-limited'' heat kernels and fast Fourier type transforms,
applied to the case when $X$ is the two-sphere.

\subsection{Near linear complexity using fast transforms}

Each iteration in the Sinkhorn algorithm amounts to computing two
vector-matrix products of the form
\begin{equation}
a_{i}=\sum_{j=1}^{N}\mathcal{K}(x_{i},y_{j})b_{j},\,\,\,\,i=1,...,N,\label{eq:sum matrix vector}
\end{equation}
 for a given function $\mathcal{K}$ on $X\times Y$ (followed by
$N$ inversions), where $b$ and $a$ denote generic ``input vector''
and ``output vectors'', respectively. In general, this requires
$O(N^{2})$ arithmetic operations. But, as we will next exploit, in
the presence of suitably symmetry fast summations techniques can be
used to lower the complexity to nearly linear, i.e. to at most $CN(\log N)^{p}$
operations (for some positive constants $C$ and $p).$ Alternatively,
separability properties of the kernels in question can often be exploited
to directly decrease the complexity (as in \cite[Remark 4.17]{m-p}).
For example, consider the case when $X=X_{1}\times X_{2}$ and $Y=Y_{1}\times Y_{2}$
and denote by $\pi_{1}$ and $\pi_{2}$ the projections on the first
and second factors, respectively. If $\mathcal{K}(x,y)$ is \emph{separable},
i.e. factors as 
\[
\mathcal{K}(x,y)=\mathcal{K}_{1}\left(\pi_{1}(x),\pi_{1}(y)\right)\mathcal{K}_{1}\left(\pi_{2}(x),\pi_{2}(y)\right)
\]
one can take the point clouds $(x_{i})_{i\leq1}$ and $(y_{i})_{i\leq N}$
as the ``grid''s induced by given point clouds on each factor with
(consisting of $N_{1}$ and $N_{2}$ points, respectively). Then by
first summing over the first factor the complexity of the computation
is reduced to $O\left(N(N^{1}+N^{2})\right)$ operations. More generally,
if the separability holds with $r$ factors, then, by induction, the
complexity becomes $O\left(N(\max_{i\leq r}N_{i})\right).$ 

\subsubsection{\label{subsec:Optimal-transport-on linear}Optimal transport on the
flat torus }

Let us first come back to the case of the flat torus $T^{n}$ discretized
by the discrete torus $\Lambda_{k},$ considered in Section \ref{subsec:Main-results-in-the torus}.
Since $\mathcal{K}(x,y):=e^{-kd^{2}(x,y)}$ is invariant under the
diagonal action of the torus $T^{n}$ it is follows from standard
arguments that the sums \ref{eq:sum matrix vector} can be computed
in $O(N)(\log N)$ arithmetic operations. Indeed, using the group
structure on $T^{n}$ we can write $\mathcal{K}(x,y)=h(x-y),$ for
some function $h$ on $\Lambda_{k}.$ Then the classical convolution
theorem of Fourier Analysis, on the discrete torus $\Lambda_{k}$
(viewed as an abelian finite group), gives (with $m_{1},...m_{N}$
the points of the dual discrete torus, identified with $\Lambda_{k}):$
\[
a_{i}=\sum_{j=1}^{N}h(x_{i}-y_{j})b_{j}=\sum_{j=1}^{N}\hat{h}(m_{j})\hat{b}(m_{j})e^{2\pi im_{j}\cdot x_{i}},\,\,\,\,\hat{f}(m_{j}):=\sum_{i=1}^{N}f_{i}e^{-2\pi ix_{i}\cdot m_{j}}
\]
This requires evaluating two Discrete Fourier Transforms (DFT) at
the $N=k^{n}$ points $m_{1},...m_{N},$ Using the Fast Fourier Transform
(FFT) this can be done in $O(N)(\log N)$ arithmetic operations. Note
that, since the heat kernel is also torus invariant, the same argument
can also be used for the kernel appearing in Theorem \ref{thm:heat kernel para},
in the torus case. Alternatively, using that $\mathcal{K}(x,y)$ is
separable on $T^{n}$ (since, in general, the squared Riemannian distance
function on a Riemannian product is the sum of the squared distances
of the factors), the summing can directly be achieved with complexity
$O(N^{1+1/n})$ for any fixed point cloud on $S^{1}.$ 

\subsubsection{\label{subsec:Optimal-transport-on two sph}Optimal transport on
the round two-sphere}

Consider the round two-sphere $S^{2}$ embedded as the unit-sphere
in $\R^{3}.$ Removing the north and south pole on $S^{2}$ we have
the standard spherical (longitude-colatitude) coordinates $(\varphi,\theta)\in[0,2\pi[\times]-\pi,\pi[.$
A complete set of (non-normalized) eigenfunctions for the Laplacian
on $L^{2}(S^{2})$ is given by the spherical harmonics

\[
Y_{l}^{m}(\varphi,\theta):=e^{im\varphi}P_{l}^{m}(\cos\theta),\,\,\,\,|m|\leq l,
\]
which has eigenvalue $\lambda_{l,m}^{2}:=l(l+1).$ Here $P_{l}^{m}(z)$
denotes, för $z\in[-1,1],$ as usual, the Legendre function of degree
$l$ and order $m$ (aka the associated Legendre polynomial); see,
for example, \cite{d-h,h-s-w}.

Given a positive number $W$ (the ``band-width'') we consider the\emph{
band-limited heat kernel }on the two-sphere:
\begin{equation}
\mathcal{K}_{t}(x,y)_{W}:=\sum_{|m|\leq l\leq W}c_{m,l}Y_{l}^{m}(x)\overline{Y_{l}^{m}(y)},\,\,\,c_{m,l}:=e^{-tl(l+1)}\left\Vert Y_{l}^{m}\right\Vert _{L^{2}}^{-2}\label{eq:band limited heat kernel on sphere}
\end{equation}
(By the spectral theorem this means that $\mathcal{K}_{t}(x,y)_{W}$
is the integral kernel of $e^{-t\Delta}\Pi_{W}$ where $\Pi_{W}$
is the orthogonal projection onto the space of all band-limited functions).
\begin{thm}
\label{thm:sphere band limit heat}Consider the two-sphere, discretized
by a given good Quasi-Monte Carlo (QMC) system and take $R$ such
that $R>1.$ Then the analog of all the results in Section \ref{subsec:Main-results-in-the torus}
are valid when the matrix kernel $e^{-kd^{2}(x,y)/2}$ is replaced
by the band-limited heat kernel $\mathcal{K}_{2k^{-1}}(x,y)_{Rk}.$
Moreover, the arithmetic complexity of each Sinkhorn iteration is
$O(N^{3/2}).$ 
\end{thm}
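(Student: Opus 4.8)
The plan is to verify two independent requirements of Theorem \ref{thm:sphere band limit heat}: (i) that the band-limited heat kernel $\mathcal{K}_{2k^{-1}}(x,y)_{Rk}$ is a legitimate replacement for $e^{-kd^{2}(x,y)/2}$ in the convergence machinery of Section \ref{subsec:Main-results-in-the torus}, and (ii) that the resulting vector-matrix products \ref{eq:sum matrix vector} can be evaluated in $O(N^{3/2})$ operations. These are logically separate and I would treat them in sequence.

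For (i), by Theorem \ref{thm:heat kernel para} the \emph{full} heat kernel $\mathcal{K}_{2k^{-1}}(x,y)$ already works on any compact Riemannian manifold, so it suffices to show that the truncation to band-width $W=Rk$ (with $R>1$) introduces only a negligible perturbation in the sense of \ref{eq:relative error of kernels} or, more conveniently, \ref{eq:abs relative error}. The tail $\mathcal{K}_{t}(x,y)-\mathcal{K}_{t}(x,y)_{W}=\sum_{l>W}e^{-tl(l+1)}\sum_{|m|\le l}Y_l^m(x)\overline{Y_l^m(y)}/\|Y_l^m\|_{L^2}^2$ is bounded in absolute value, using $|\sum_{|m|\le l}Y_l^m(x)\overline{Y_l^m(y)}/\|Y_l^m\|^2|=(2l+1)/4\pi$ times a Legendre polynomial $\le (2l+1)/4\pi$ in modulus, by $C\sum_{l>W}(l+1)^2 e^{-tl(l+1)}$. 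With $t=2k^{-1}$ and $W=Rk$ the exponent at $l=W$ is $-2k^{-1}Rk(Rk+1)\sim -2R^2 k$, so the tail is $O(k^2 e^{-2R^2 k})$, which for $R>1$ is $\le e^{-Ck}$ with $C>\sup_{S_2\times S_2} d^2/2$ (this sup is $\pi^2/2<5$, and $2R^2>2$ — here one needs $R$ large enough, which is why the hypothesis is stated as ``take $R$ such that $R>1$'' and the proof may in fact require $R>\pi/\sqrt 2$ or so; I would simply enlarge the threshold on $R$ as needed, and note the constant in \ref{eq:abs relative error} scales correctly since $\inf c = 0$). Thus \ref{eq:abs relative error} holds and all the convergence conclusions — Theorems \ref{thm:dynamic general}, \ref{thm:heat kernel para} and Corollary \ref{cor:constr of approx in quite general setting } — transfer verbatim, using that $S_2$ with $c=d^2/2$ satisfies the standing hypotheses (as recorded in Example \ref{exa:The-assumptions-in}) and that the good QMC system supplies the local density property of order $2$.

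For (ii), the key point is that a single application of the matrix $\mathcal{K}_{2k^{-1}}(x_i,y_j)_{Rk}$ to a vector $b_j$ factors through the band-limited spherical harmonic coefficients: writing $\mathcal{K}_t(x,y)_W=\sum_{|m|\le l\le W}c_{m,l}Y_l^m(x)\overline{Y_l^m(y)}$, one first computes the $O(W^2)$ numbers $\hat b_{l,m}:=\sum_j \overline{Y_l^m(y_j)}\,b_j$ (a \emph{forward} spherical harmonic transform on scattered QMC nodes), multiplies each by $c_{m,l}$, and then evaluates $a_i=\sum_{|m|\le l\le W} c_{m,l}\hat b_{l,m} Y_l^m(x_i)$ (an \emph{adjoint/synthesis} transform). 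Since $N\sim k^2$ and $W=Rk$, one has $W^2\sim R^2 N$, so both the number of nodes and the number of harmonics are $\Theta(N)$. Using fast spherical harmonic transforms — e.g. the FFT in the longitude variable $\varphi$ combined with fast Legendre-transform / fast multipole techniques in $\theta$ (as in the Driscoll–Healy framework, cf.\ \cite{d-h}, and the scattered-node variants) — each such transform costs $O(N^{3/2})$ (or $O(N (\log N)^2)$ with the more elaborate butterfly algorithms; the cruder $O(N^{3/2})$ bound is what is claimed and the easiest to justify, coming from $O(W)$ FFTs of length $O(W)$ plus $O(W)$ Legendre transforms of length $O(W)$ each done naively in $O(W^2)$, i.e.\ $O(W^3)=O(N^{3/2})$). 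Since each Sinkhorn iteration is two such products plus $N$ scalar divisions, the per-iteration cost is $O(N^{3/2})$, as asserted.

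The main obstacle I anticipate is \textbf{not} the complexity bookkeeping but the clean statement of the fast-transform result on \emph{scattered} (QMC) nodes rather than on a tensor product grid: the classical Driscoll–Healy and Healy–Kostelec fast algorithms assume equiangular grids, whereas a good QMC system (spherical designs) is irregular. The honest route is either to invoke a nonuniform fast spherical harmonic transform (NFSHT), whose $O(N^{3/2})$ or near-linear cost is established in the literature, or — and this is probably what the paper intends — to observe that the cubature-point structure of the QMC system can itself be chosen compatibly with a fast transform, or simply to cite the relevant numerical-analysis references. I would therefore state the complexity step as ``by the existence of fast (forward and adjoint) spherical harmonic transforms at band-width $W$ on $N\sim W^2$ points, costing $O(N^{3/2})$ operations, see \cite{d-h} and references therein'', flagging that this is a standard ingredient, and devote the detailed argument to part (i), where the exponential smallness of the band-limiting tail is the substantive (if short) computation.
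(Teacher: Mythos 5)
Your proposal is correct and follows essentially the same route as the paper: reduce to the full heat-kernel case (Theorem \ref{thm:heat kernel para}) by showing the band-limiting tail is exponentially small for $W=Rk$, then obtain $O(N^{3/2})$ per iteration by factoring the matrix--vector product through forward and inverse discrete spherical harmonic transforms computed by separation of variables. The only (minor) differences are that you bound the tail via the Legendre addition theorem where the paper invokes Sogge-type $L^{\infty}$ eigenfunction bounds, and that you are more explicit than the paper about the scattered-node versus grid issue in the fast-transform step and about the precise threshold on $R$ --- points the paper itself glosses over by citing \cite{r-t} and \cite{d-h}.
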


\begin{proof}
As recalled in Example \ref{exa:The-assumptions-in} the cost function
$d(x,y)^{2}$ on the sphere satisfies the assumptions in Theorem \ref{thm:dynamic general}
(with $t=\infty)$ and Corollary \ref{cor:constr of approx in quite general setting }.

\emph{Step 1: The asymptotics \ref{eq:upper bound on minus c k} and
\ref{eq:expansion c k} are satisfied.}

By Theorem \ref{thm:heat kernel para} it is enough to observe that
the following basic estimate holds if $t=2k^{-1}$ and $W=Rk:$ 
\[
\left|\mathcal{K}_{t}(x,y)-\mathcal{K}_{t}(x,y)_{W}\right|\leq C_{\delta}e^{-2R^{2}k(1-\delta)}
\]
for any given $\delta\in]0,1[.$ To prove the estimate note that 
\[
\left|\mathcal{K}_{t}(x,y)-\mathcal{K}_{t}(x,y)_{W}\right|\leq\sum_{l>W}e^{-2k^{-1}l(l+1)}\frac{\left\Vert Y_{l}^{m}\right\Vert _{L^{\infty}}^{2}}{\left\Vert Y_{l}^{m}\right\Vert _{L^{2}}^{2}}\leq2Ck^{3}\sum_{l/k>R}e^{-2k(\frac{l}{k})^{2}}\frac{(l+1)^{2}}{k^{2}}\frac{1}{k},
\]
 using that the quotient involving $Y_{l}^{m}$ is dominated by $Cl$
(and that a given $l$ corresponds to $2l+1$ $m$s). Indeed, this
is a special case of the the universal $L^{2}-$estimates for an eigenfunction
$\Psi_{\lambda}$ of the Laplacian (with eigenvalue $\lambda^{2})$
on a general $n-$dimensional Riemannian manifold \cite{sog}, which
gives the growth factor $C\lambda^{n-1}.$ Finally, dominating the
Riemann Gaussian sum above with the integral of the function $e^{-ks^{2}}s^{2}$
over $[R,\infty[$ concludes the proof. 

\emph{Step 3: Complexity analysis}

Using formula \ref{eq:band limited heat kernel on sphere} gives
\[
a_{i}=\sum_{|m|\leq l\leq W}c_{m,l}\hat{b}_{l,m}Y_{l}^{m}(x_{i}),\,\,\,c_{m,l}=e^{-tl(l+1)}\left\Vert Y_{l}^{m}\right\Vert _{L^{2}}^{-1}\hat{b}_{l,m},\,\,\,\hat{b}_{l,m}:=\sum_{j=1}^{N}b_{j}\overline{Y_{l}^{m}(x_{j})})
\]
 $\hat{b}_{l,m}$ is the ``forward discrete spherical Fourier transform''
evaluated at $(l,m).$ Once it has been computed for all $(l,m)$
$a_{i}$ becomes an ``inverse discrete spherical Fourier transform''
(with coefficients $c_{m,l}\hat{b}_{l,m}$ ). By separation of variables,
both these transforms can be computed using a total of $O(k^{3})(=O(N^{3/2}))$
arithmetic operations (see the discussion after formula 1.9 in \cite{r-t}). 
\end{proof}
In the special case when the good Quasi-Monte Carlo system is defined
by an ``equi-angular'' colatitude-longitude rule of $N$ weighted
points on $S^{2}$ the arithmetic complexity of each iteration can
be reduced to $O(N)(\log N)^{2}$ operations, using a fast discrete
spherical Fourier transform \cite{h-r-k-m}. 

\subsubsection{\label{subsec:Application-to-the antenna}Application to the reflector
antenna problem}

The extensively studied \emph{far field reflector antenna problem}
appears when $X=Y=S^{n}$ is the $n-$dimensional sphere $S^{n},$
embedded as the unit-sphere in $\R^{n+1}$ and the cost function is
taken as $c(x,y):=-\log|x-y|$ \cite{waII,g-o}. Briefly, the problem
is to design a perfectly reflecting surface $\Sigma$ in $\R^{n+1}$
with the following property: when $\Sigma$ is illuminated with light
emitted from the origin with intensity $\mu\in\mathcal{P}(S^{n})$
the output reflected intensity becomes $\nu\in\mathcal{P}(S^{n})$
(of course, $n=2$ in the usual applications). Representing $\Sigma$
as a radial graph over $S^{n}:$ 
\[
\Sigma:=\{h(x)x\},\,\,x\in S^{n},
\]
 for a positive function $h$ on $S^{n}$ it follows from the reflection
law and conservation of energy that $h$ satisfies the following Monge-Ampère
type equation, expressed in terms of the covariant derivatives $\nabla_{i}$
in local orthonormal coordinates:

\begin{equation}
\frac{\det(-\nabla_{i}\nabla_{j}h+2h^{-1}\nabla_{i}h\nabla_{j}h+(h-\eta)\delta_{ij})}{((|\nabla h|^{2}+h^{2})/2h)^{n}}=e^{g(F_{h}(x))-f(x)},\label{eq:ant eq intro}
\end{equation}
 where $F_{h}(x)$ denotes the reflected direction of the ray emitted
in the direction $x$ (and $\mu$ and $\nu$ are represented as in
\ref{eq:mu and nu in terms of f and g intro}). The equation is also
supplemented with the ``second boundary value condition'' that $F_{h}$
maps the support of $\mu$ onto the support of $\nu.$ Assuming that
$f$ and $g$ are smooth there exists a smooth solution $h,$ which
is unique up to scaling (see \cite{c-g-h} and references therein).
\begin{thm}
Consider the two-sphere, discretized by a given good Quasi-Monte Carlo
(QMC) system. Let $K^{(k)}$ be the $N_{k}\times N_{k}$ matrix defined
by 
\[
K_{ij}^{(k)}=|x_{i}^{(k)}-x_{j}^{(k)}|^{k}
\]
Consider the Sinkhorn algorithm associated to $(p^{(k)},q^{(k)},K^{(k)}).$
There exists a positive constant $A_{0}$ such that for any $A>A_{0}$
the following holds: after $m_{k}=\left\lfloor Ak\log k\right\rfloor $
Sinkhorn iterations the function $h_{k}$ on $S^{n}$ defined by the
$k$ th root of $a_{k}:=a^{(k)}(m_{k})$ converges uniformly, as $k\rightarrow\infty,$
towards a solution $h$ of the antenna equation \ref{eq:ant eq intro}
satisfying the corresponding second boundary value condition. More
precisely, there exists a constant $C$ (depending on $A)$ such that
\[
\sup_{S_{N}}|h_{k}-h|\leq Ck^{-1}\log k,
\]
Moreover, the arithmetic complexity of each iteration is $O(N^{3/2})$
in general and $O(N)(\log N)^{2}$ in the case of an equi-angular
grid. 
\end{thm}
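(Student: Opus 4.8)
The plan is to read this off from Corollary \ref{cor:constr of approx in quite general setting } applied to the cost function $c(x,y):=-\log|x-y|$ on $X=Y=S_{2}$, supplemented by the change of variables $h=e^{-u}$ standard in the reflector problem and by the fast-transform complexity count already carried out for Theorem \ref{thm:sphere band limit heat}.

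First, observe that the matrix in the statement is exactly the Sinkhorn kernel of this cost: $K_{ij}^{(k)}=|x_{i}^{(k)}-x_{j}^{(k)}|^{k}=e^{-kc(x_{i}^{(k)},x_{j}^{(k)})}$, with $\text{sing}(c)$ equal to the diagonal. By Example \ref{exa:The-assumptions-in} this $c$ satisfies the assumptions A1--A5, so Theorem \ref{thm:dynamic general} applies with $T=\infty$ (via Theorem \ref{thm:(Kim-Streets-Warren-)-Assume}), the good QMC system supplying the local density property of order $2$. Moreover $u_{0}=0$ lies in $\mathcal{H}$ for this cost: $\sup_{x}\log|x-y|$ is attained at the antipode $x=-y$ and equals $\log 2$, so $0^{c}\equiv\log 2$, $(0^{c})^{c}\equiv 0$, and $\Gamma_{0}=\{(x,y):|x-y|=2\}=\{(x,-x):x\in S_{2}\}$, the graph of the antipodal diffeomorphism, near which $c$ is smooth. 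Hence, starting the Sinkhorn iteration from $a^{(k)}(0):=p^{(k)}$ (which corresponds to $u_{0}^{(k)}=0$) and taking $m_{k}=[Ak\log k]$ with $A$ the constant of Corollary \ref{cor:constr of approx in quite general setting }, that corollary (with Remark \ref{rem:Fourier} furnishing the extension to $S_{2}$) yields a smooth $c$-convex potential $u$, unique up to an additive constant, whose map $F_{u}$ is optimal and satisfies $(F_{u})_{*}\mu=\nu$, with $\sup_{S_{2}}|u_{m_{k}}^{(k)}-u|\leq Ck^{-1}\log k$.

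Next, pass from $u$ to $h$. By the dictionary of Section \ref{subsec:Discretization-and-the}, $a_{i}^{(k)}(m)=e^{-ku_{m}^{(k)}(x_{i})}p_{i}^{(k)}$, so $k^{-1}\log a_{i}^{(k)}(m_{k})=-u_{m_{k}}^{(k)}(x_{i})+k^{-1}\log p_{i}^{(k)}$; since the QMC weights are comparable to $N_{k}^{-1}$ with $N_{k}\sim k^{2}$, the last term is $-k^{-1}\log N_{k}+O(k^{-1})$, a global shift of size $O(k^{-1}\log k)$. Therefore $h_{k}:=a^{(k)}(m_{k})^{1/k}$ obeys $\sup_{S_{2}}|h_{k}-e^{-u}|\leq Ck^{-1}\log k$, the global factor $N_{k}^{-1/k}\to 1$ being absorbed into the scaling normalization of the limit (or into $C$). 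It is classical in the far-field reflector literature (see \cite{waII,g-o,c-g-h}) that under the substitution $h=e^{-u}$ the reflector equation \ref{eq:ant eq intro}, with its second boundary value condition that $F_{h}$ maps $\text{supp}\,\mu$ onto $\text{supp}\,\nu$, is precisely the optimal transport Monge--Amp\`ere equation $\det(\partial_{x}F_{u})=e^{f(x)-g(F_{u}(x))}$ for the cost $c=-\log|x-y|$ (via formula \ref{eq:form for det partial F}), with $F_{h}=F_{u}$. Hence $h:=e^{-u}$ solves \ref{eq:ant eq intro}, is unique up to scaling, and $h_{k}\to h$ with the stated rate.

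Finally, for the complexity, note that on the unit sphere $K^{(k)}(x,y)=|x-y|^{k}=(2-2\langle x,y\rangle)^{k/2}$ is zonal. For $k$ even it is a polynomial of degree $k/2$ in $\langle x,y\rangle$, hence by the addition theorem band-limited at spherical-harmonic degree $k/2$, with Gegenbauer coefficients available in closed form; for $k$ odd one first replaces $K^{(k)}$ by the truncation of its Gegenbauer expansion at degree $W=[Rk]$ with $R$ large, which differs from $K^{(k)}$ by $O(e^{-\delta k})$, $\delta>0$, and so satisfies \ref{eq:abs relative error} (here $\inf c=-\log 2$), leaving the convergence results unchanged. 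In both cases the matrix-vector product \ref{eq:sum matrix vector} is a forward discrete spherical Fourier transform of band-width $W=O(k)=O(N^{1/2})$, a multiplication of the coefficients by the precomputed Gegenbauer weights, and an inverse transform; exactly as in Step 3 of the proof of Theorem \ref{thm:sphere band limit heat} this costs $O(N^{3/2})$ operations in general and $O(N)(\log N)^{2}$ for an equi-angular grid, using the fast discrete spherical Fourier transform \cite{h-r-k-m,d-h}. I expect the only point that has to be genuinely estimated rather than quoted to be the exponential smallness of this truncation error, i.e. the decay of the Gegenbauer coefficients of $(2-2t)^{k/2}$: writing $t=\cos\theta$ this is $2^{k}\sin^{k}(\theta/2)$, which concentrates at $\theta=\pi$ on the scale $k^{-1/2}$, so a routine Laplace-type computation shows its degree-$l$ coefficient is $O(2^{k}e^{-cl^{2}/k})$ and hence the tail beyond $W=Rk$ is $\leq e^{-\delta k}$ once $R$ is large enough; this is the main, though modest, obstacle, the rest being bookkeeping on top of the already established general results.
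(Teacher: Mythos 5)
Your proposal is correct and follows essentially the same route as the paper: the convergence part is read off from Corollary \ref{cor:constr of approx in quite general setting } together with Example \ref{exa:The-assumptions-in} and the dictionary $a_{i}^{(k)}=e^{-ku}p_{i}^{(k)}$ (with the antenna equation identified with optimal transport for $c=-\log|x-y|$ via the cited literature), and the complexity part from the zonal structure of $|x-y|^{k}$, the Legendre addition theorem and fast spherical Fourier transforms exactly as in Theorem \ref{thm:sphere band limit heat}. If anything you are more careful than the paper at the points it glosses over: the paper claims exact band-limitation at degree $k$ by expanding $(1-s)^{k}$ (literally valid only for even exponents, where the degree is $k/2$), whereas you handle odd $k$ by a truncation controlled through \ref{eq:abs relative error} with a Gaussian coefficient-decay estimate, and you additionally verify $u_{0}=0\in\mathcal{H}$ for this cost and track the weight factor $(p_{i}^{(k)})^{1/k}$, which the paper leaves implicit.
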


\begin{proof}
As recalled in Example \ref{exa:The-assumptions-in} the cost function
$-\log|x-y|$ on the sphere satisfies the assumptions in Corollary
\ref{cor:constr of approx in quite general setting }. For the complexity
analysis we first recall the general fact that any kernel $K^{(k)}(x,y)$
which is radial, i.e. only depends on $|x-y|,$ may be expressed as
\begin{equation}
K^{(k)}(x,y)=\sum_{l=1}^{\infty}C_{m,l}Y_{l}^{m}(x)\overline{Y_{l}^{m}(y)}\label{eq:form for K radial basis}
\end{equation}
for some positive constants $C_{m,l}$ (a proof will be given below).
By the argument in the proof of Step 2 in Theorem \ref{thm:sphere band limit heat},
it will be enough to show that $C_{m,l}=0$ when $l>k,$ i.e. that
$K^{(k)}(x,y)$ is already band-limited with $W=k.$ To this end we
follow the general approach in \cite{k-k-p}. First observe that when
$x$ and $y$ are in $S^{2}$ we can write $|x-y|^{2}=2(1-x\cdot y).$
Hence, $K_{k}(x,y)=2^{k}f^{(k)}(x\cdot y),$ where $f^{(k)}(s)=(1-s)^{k}$
for $s\in[-1,1].$ The Legendre polynomials $p_{l}(=p_{l}^{0}$ form
a base in the space of all polynomials of degree at most $k$ (which
is orthogonal wrt Lebesgue measure on $[0,1])$ and hence we can decompose
\[
2^{k}f^{(k)}=\sum_{l=1}^{k}c_{l}^{(k)}p_{l}.
\]
Formula \ref{eq:form for K radial basis} now follows from the classical
Spherical Harmonic (Legendre) addition theorem: 
\[
p_{l}(x\cdot y)=\frac{4\pi}{2l+1}\sum_{|m|\leq l}Y_{l}^{m}(x)\overline{Y_{l}^{m}(y)}.
\]
\end{proof}

\section{Outlook\label{sec:Outlook}}

\subsection{Generalized parabolic optimal transport and singularity formation }

Consider the setting in Section \ref{sec:Generalizations-and-outlook}
with a cost function $c$ satisfying the assumptions A1 and A2, but
assume for simplicity that $c$ is globally continuous (for example,
$c=d^{2}/2$ in the Riemannian setting). Recall that, given initial
data $u_{0}\in\mathcal{H}$ and volume forms $\mu$ and $\nu,$ the
parabolic equation \ref{eq:parabolic pde general setting} admits
a smooth solution $u_{t}$ on some maximal time-interval $[0,T[$
and the corresponding maps $F_{u_{t}}$ give an evolution of diffeomorphisms
of $X.$ It does not seem to be known whether $T=\infty,$ in general,
i.e. it could be that there are no solutions in $C^{\infty}(X\times]0,\infty[),$
in general. Still, using the corresponding iteration $u_{k}^{(m)}$
(say, defined with respect to $\mu_{k}=\mu$ and $\nu_{k}=\nu$) a
generalized notion of solution can be defined:
\begin{prop}
\label{prop:generalized parabolic}Given a $c-$convex function $u_{0},$
define the following curve $u_{t}$ of functions on $X,$ emanating
from $u_{0}:$ 
\[
u_{t}:=\sup\left\{ u_{k}^{(m)}:\,\,\,(m,k):\,\,m/k\rightarrow t,\,\,k\rightarrow\infty\right\} 
\]
 Then $u_{t}$ is $c-$convex for any fixed $t$ (and, in particular,
continuous) and there exists a constant $C$ such that $\sup_{X\times[0,\infty[}|u_{t}(x)|\leq C.$ 
\end{prop}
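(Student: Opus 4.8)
The plan is to work with the rescaled Sinkhorn iterates $u^{(k)}_m$ attached to the data $(\mu,\nu,c)$ with $\mu^{(k)}=\mu$ and $\nu^{(k)}=\nu$ (so the density property \ref{eq:density property} holds trivially and both measures have full support), and to extract two facts. First, a bound $\sup_X|u^{(k)}_m|\le C$ that is uniform in $m$ and $k$. Second, the $c$-convexity of every uniform subsequential limit of $u^{(k_j)}_{m_j}$ taken along sequences with $m_j/k_j\to t$ and $k_j\to\infty$ (such limits exist by compactness, so the set over which the supremum in the definition of $u_t$ is taken is nonempty and uniformly bounded). Granting both, $u_t$ is the pointwise supremum of a uniformly bounded family of $c$-convex functions $u_\alpha$; writing each as $u_\alpha=g_\alpha^c$ with $g_\alpha:=u_\alpha^c\in C(Y)$ one has $\sup_\alpha g_\alpha^c=(\inf_\alpha g_\alpha)^c$, and $\inf_\alpha g_\alpha\in C(Y)$ because the $g_\alpha$ are uniformly bounded and equicontinuous with modulus controlled by that of the uniformly continuous function $c$. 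Hence $u_t=(\inf_\alpha g_\alpha)^c$ is $c$-convex, in particular continuous, and $\sup_{X\times[0,\infty[}|u_t|\le C$.

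For the uniform bound I would write $u^{(k)}_m=\tilde u^{(k)}_m+u^{(k)}_m(x_0)$ with $\tilde u^{(k)}_m(x_0)=0$. For $m\ge1$ the function $\tilde u^{(k)}_m$ lies in $\bigcup_k S^{(k)}(C(X))$ modulo constants, hence in the compact set of Lemma \ref{lem:compactness of function with k}, so $\sup_X|\tilde u^{(k)}_m|\le C_1$. To control $u^{(k)}_m(x_0)$ I would invoke the monotonicity established in Step 1 of the proof of Theorem \ref{thm:conv of u m in general setting}, which survives the (linear) rescaling: $I_{\mu^{(k)}}$ and $\mathcal F^{(k)}$ are non-increasing and $\mathcal L^{(k)}$ is non-decreasing along $m\mapsto u^{(k)}_m$, where $\mathcal F^{(k)}(u)=\int u\,\mu^{(k)}+\int v^{(k)}[u]\,\nu^{(k)}$ and $I_{\mu^{(k)}}=\mathcal F^{(k)}+\mathcal L^{(k)}$. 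From $u^{(k)}_m(x_0)=I_{\mu^{(k)}}(u^{(k)}_m)-I_{\mu^{(k)}}(\tilde u^{(k)}_m)$ and $|I_{\mu^{(k)}}(\tilde u^{(k)}_m)|\le C_1$, monotonicity gives the upper bound $I_{\mu^{(k)}}(u^{(k)}_m)\le I_{\mu^{(k)}}(u_0)\le\sup_X|u_0|$; for the lower bound, $\mathcal F^{(k)}(u^{(k)}_m)\ge\inf_{C(X)}\mathcal F^{(k)}=\inf_{\mathcal K_0}\mathcal F^{(k)}$, which is bounded below uniformly in $k$ (on $\mathcal K_0$ one has $\sup_X|u|\le C_1$, and $v^{(k)}[u]\ge-\sup|c|-\sup_X|u|$ since $\mu^{(k)}$ is a probability measure), while $\mathcal L^{(k)}(u^{(k)}_m)\ge\mathcal L^{(k)}(u_0)\ge-\sup|c|-\sup_X|u_0|$ (again using that $\mu^{(k)}$ is a probability measure). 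Combining these gives $|u^{(k)}_m(x_0)|\le C_2$ uniformly, hence $\sup_X|u^{(k)}_m|\le C_1+C_2$; the case $m=0$ is trivial.

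For the $c$-convexity of the limits, fix $(m_j,k_j)$ with $m_j/k_j\to t$, $k_j\to\infty$, and pass (by the compactness just used) to a subsequence along which $u^{(k_j)}_{m_j}\to w$ uniformly. If $m_j=0$ for infinitely many $j$ the limit along that sub-subsequence is $u_0$, which is $c$-convex by hypothesis; otherwise $m_j\ge1$ eventually and $u^{(k_j)}_{m_j}=u^{(k_j)}[v_j]$ with $v_j:=v^{(k_j)}[u^{(k_j)}_{m_j-1}]$. The $v_j$ are uniformly bounded (by the previous step) and equicontinuous (uniform continuity of $c$), and the argument in the proof of Lemma \ref{lem:dens prop}, applied with the roles of $\mu$ and $\nu$ interchanged and with a rate depending only on the modulus of continuity, yields $\sup_X|u^{(k_j)}[v_j]-v_j^{c}|\to0$. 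Since each $v_j^{c}$ is $c$-convex and $u\mapsto u^c$ is $1$-Lipschitz for the sup-norm, the uniform limit $w$ is $c$-convex. Thus every member of the family defining $u_t$ is $c$-convex, and the first paragraph concludes the proof. The step I expect to be the crux is the uniform control of the additive normalization $u^{(k)}_m(x_0)$: the monotonicity of the three functionals is exactly what closes the estimate, and one must check that the resulting lower bounds are uniform in both $m$ and $k$; the stability of $c$-convexity under the soft-min/min comparison is, by contrast, routine.
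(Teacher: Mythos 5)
Your proof is correct and follows the same skeleton as the paper's: (i) a uniform bound $\sup_X|u^{(k)}_m|\le C$ coming from the monotonicity of $I_\mu$, $\mathcal{F}^{(k)}$ and $\mathcal{L}^{(k)}$ along the iteration combined with the compactness of Lemma \ref{lem:compactness of function with k}; (ii) $c$-convexity of the uniform subsequential limits; (iii) stability of $c$-convexity under suprema. The differences are in the details, and they mostly work in your favour. In (i) you bound $\inf_{C(X)}\mathcal{F}^{(k)}$ and $\mathcal{L}^{(k)}(u_0)$ directly from the fact that $\mu$ and $\nu$ are probability measures together with the uniform bound on the normalized image of $S^{(k)}$, whereas the paper invokes the asymptotic statements of Lemma \ref{lem:dens prop} and Theorem \ref{Thm:weak conv of fixed points towards optimal transport plans}; and your decomposition $u^{(k)}_m=\tilde u^{(k)}_m+u^{(k)}_m(x_0)$ avoids the paper's slightly loose step of applying the inequalities $\sup_X u\le I_\mu(u)+A_1$, $\inf_X u\ge I_\mu(u)-A_2$ (stated for $c$-convex $u$) to the iterates, which are only soft transforms. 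In (ii) you argue via a uniform-in-$v$ version of Lemma \ref{lem:dens prop} for the single transform, showing $\sup_X|u^{(k_j)}[v_j]-v_j^{c}|\to0$ over the equicontinuous, uniformly bounded family $v_j$; this is valid, but note it uses not only the common modulus of continuity but also $\inf_y\nu(B_r(y))>0$, which holds here because $\nu$ is a volume form with full support — worth stating explicitly. The paper instead cites the argument at the end of Step 1 of Theorem \ref{Thm:weak conv of fixed points towards optimal transport plans}. In (iii) your identity $\sup_\alpha g_\alpha^{c}=(\inf_\alpha g_\alpha)^{c}$, with $\inf_\alpha g_\alpha$ continuous by equicontinuity and uniform boundedness, handles an arbitrary family of limit points, whereas the paper's Step 2 treats only a finite maximum and its Step 3 tacitly assumes finitely many limit points; your version closes that small gap.
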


\begin{proof}
\emph{Step 1: there exists a constant such that $|u_{k}^{(m)}|\leq C.$ }

By the argument in Step 3 in the proof of Theorem \ref{thm:conv of u m in general setting}
we have 
\[
\mathcal{F}^{(k)}(u^{(k)})+\mathcal{L}^{(k)}(u_{0})\leq I_{\mu}(u_{m}^{(k)})\leq I_{\mu}(u_{0})
\]
By Lemma \ref{lem:dens prop} $\mathcal{L}^{(k)}(u_{0})\rightarrow-\int u_{0}^{c}\nu$
and by Theorem \ref{Thm:weak conv of fixed points towards optimal transport plans}
$\mathcal{F}^{(k)}(u_{m}^{(k)})\rightarrow\inf_{C^{0}(X)}J$ and hence
the lhs above is uniformly bounded in $k.$ Thus, there exists a constant
$C$ such that $-C\leq I_{\mu}(u_{m}^{(k)})\leq C.$ The proof of
Step 1 is now concluded by observing that there exist constants $A_{1}$
and $A_{2}$ such that, for any $c-$convex function $u,$ 
\[
\sup_{X}u\leq I_{\mu}(u)+A_{1},\,\,\,\inf_{X}u\geq I_{\mu}(u)-A_{2}.
\]
 Indeed, both functionals $f_{1}(u):=\sup_{X}u-I_{\mu}(u)$ and $f_{2}(u):=\inf_{X}u-I_{\mu}(u)$
are continuous on $C(X)$ and descend to $C(X)/\R.$ But the space
of $c-$convex functions is compact in $C(X)/\R$ (as is shown precisely
as in Lemma \ref{lem:compactness of function spaces}) and hence any
continuous functional on the space is uniformly bounded, which implies
the two inequalities above. 

\emph{Step 2: If $\{u_{\alpha}\}_{\alpha\in A}$ is a finite family
of $c-$convex functions, then $u:=\max\{u_{\alpha}\}_{\alpha\in A}$
is $c-$convex.}

It is enough to find a function $v\in C(X)$ such that $u=v^{c}.$
We will show that $v:=\min\{u_{\alpha}^{c}\}_{\alpha\in A}$ does
the job. To this end first observe that $u\mapsto u^{c}$ is order
preserving. Hence, $u_{\alpha}\leq u$ implies that $u_{\alpha}^{c}\geq u^{c},$
giving $v\geq u^{c}.$ Applying the $c-$Legendre transform again
thus gives $v^{c}\leq u^{cc}=u.$ To prove the reversed inequality
first observe that, by definition, $u_{\alpha}^{c}\geq v$ and hence
$u_{\alpha}=(u_{\alpha}^{c})^{c}\leq v^{c}.$ Finally, taking the
sup over all $\alpha$ proves the desired reversed inequality.

\emph{Step 3: Conclusion}

Denote by $\mathcal{K}_{t}$ the closure in $C(X)$ of the set $S_{t}$
of all $u_{m}^{(k)}$ such that $m/k\rightarrow t$ and $k\rightarrow\infty.$
By Step 1 and Lemma \ref{lem:compactness of function with k} $\mathcal{K}_{t}$
is compact. Let $u_{1},...,u_{m}$ be the limit points of $S_{t}.$
By the argument towards the end of Step 1 in the proof of Theorem
\ref{Thm:weak conv of fixed points towards optimal transport plans},
$u_{i}$ is $c-$convex. Hence, by Step 2, so is $u:=\max\{u_{i}\}.$ 
\end{proof}
The curve $u_{t}$ \ref{eq:parabolic pde general setting} is well-defined
for any probability measure $\mu$ and $\nu$ on compact topological
spaces $X$ and $Y$ and for any continuous cost function $c.$ Moreover,
if $\mu$ and $\nu$ are normalized volume forms on compact manifolds,
assumptions A1 and A2 hold and $u_{0}\in\mathcal{H},$ then, by Theorem
\ref{thm:conv of u m in general setting}, $u_{t}$ coincides with
the classical solution of the parabolic equation \ref{eq:parabolic pde general setting},
as long as such such a solution exists in $\mathcal{H},$ i.e. as
long as $F_{u_{t}}$ is a well-defined diffeomorphism. This makes
the curve $u_{t}$ a candidate for a solution to the problem posed
in \cite[Problem 9]{d-f} of defining some kind of weak solution to
the parabolic equation \ref{eq:parabolic pde general setting}, without
making assumptions on the MTW-tensor etc (as in Theorem \ref{thm:(Kim-Streets-Warren-)-Assume}).
The connection to the Sinkhorn algorithm also opens the possibility
of numerically exploring singularity formation of classical solutions
$u_{t}$ to the parabolic equation \ref{eq:parabolic pde general setting}
as $t\rightarrow T$ (the maximal existence time). As indicated in
\cite[Problem 9]{d-f} one could expect that the first derivatives
of a classical solution $u_{t}$ blow up along a subset $S$ of $X$
of measures zero as $t\rightarrow T$ (moreover, in the light of the
discussion in \cite[Problem 8]{d-f}, the subset $S$ might be expected
to be rectifiable and of Hausdorff codimension at least one). 

Finally, it may be illuminating to point out that, even if the construction
of the generalized solution $u_{t}$ may appear to be rather non-standard
from a PDE point of view it bears some similarities to the method
of ``vanishing viscosity'' for constructing solutions to PDEs by
adding small regularizing terms. This is reinforced by the interpretation
of the inverse of $k$ as an ``entropic regularization parameter''
discussed in the introduction of the paper (also note that the approximations
$u_{m_{k}}^{(k)}$ are smooth when the heat kernel is used, as in
Theorem \ref{thm:heat kernel para}). One is thus lead to ask whether,
under suitable regularity assumptions on $(\mu,\nu,c)$ the curve
$u_{t}$ is a viscosity solution of the parabolic PDE \cite{c-i-l}? 

\section{\label{sec:Appendix:-proof-of}Appendix A: proof of Lemma \ref{lem:discrete stat phase}
(discrete Laplace method)}

We start with the following elementary lemma:
\begin{lem}
\label{lem:local dens prop in model case}Let $h_{k}$ be a sequence
of continuous convex functions on the polydisc $D_{k}$ in $\R^{n}$
of radius $\log k$ centered at $0$ such that there exists a constant
$C$ such such that $|h_{k}(x)|+|\nabla_{e_{1}}\nabla_{e_{2}}h_{k}|(x)\leq Ce^{-|x|^{2}/C},$
where $e_{1}$ and $e_{2}$ are any two unit-vectors. Then there exists
a constant $C'$ (only depending on $C)$ such that
\[
\left|k^{-n/2}\sum_{x_{i}^{(k)}\in D_{k}\cap(k^{-1/2}\Z)^{n}}h_{k}(x_{i}^{(k)})-\int_{D_{k}}h_{k}dx\right|\leq C'/k
\]
\end{lem}

\begin{proof}
By restricting the integration to one variable at a time it is enough
to consider the case when $n=1.$ Fix $x_{i}^{(k)},$ which, by symmetry,
may be assumed non-negative. For any fixed $x$ in the interval $I_{k}(x_{i}^{(k)})$
centered at $x_{i}^{(k)},$ of length $k^{-1/2},$ Taylor expanding
$h_{k}$ gives 
\[
|h_{k}(x_{i}^{(k)})-h_{k}(x)-(x_{i}^{(k)}-x)h_{k}'(x_{i}^{(k)})|\leq k^{-1}Ce^{-(x_{i}^{(k)})^{2}/C}\leq Ck^{-1}e^{-(x-1/2k^{1/2})^{2}/C},
\]
 using that $e^{-x^{2}/C}$ is decreasing in the last step. By symmetry,
the integral over $I_{k}(x_{i}^{(k)})$ of the linear term $(x_{i}^{(k)}-x)h_{k}'(x_{i}^{(k)})$
vanishes, giving

\[
k^{-1/2}h_{k}(x_{i}^{(k)})=\int_{I_{k}(x_{i}^{(k)})}h_{k}(x_{i}^{(k)})dx=\int_{I_{k}(x_{i}^{(k)})}h_{k}(x)dx+\epsilon_{i}^{(k)},
\]
 where 
\[
|\epsilon_{i}^{(k)}|\leq Ck^{-1}\int_{I_{k}(x_{i}^{(k)})}e^{-(x-1/2k^{1/2})^{2}/C}dx
\]
Hence, summing over all points $x_{i}^{(k)}\in D_{k}\cap k^{-1/2}\Z$
except the end points and using that $|h_{k}(x_{i}^{(k)})|\leq Ce^{-(\log k)^{2}/C}\leq C'/k^{-1}$
at the end points gives

\[
\left|k^{-1/2}\sum_{x_{i}^{(k)}\in D_{k}\cap k^{-1/2}\Z}h_{k}(x_{i}^{(k)})-\int_{D_{k}}h_{k}dx\right|\leq C'k^{-1}+k^{-1}C''\int_{0\leq s\leq\log k}e^{-(s-1/2k^{1/2})^{2}/C}ds,
\]
 which concludes the proof. 
\end{proof}
In the sequel we will denote by $+$ the ordinary group structure
on $T^{n}$ and by $0$ the zero with respect to the group structure.
Without loss of generality we may as well assume that $\alpha(x_{0})=0$
and $h(x_{0})=1.$ We will denote by $O(k^{-1})$ any sequence of
functions satisfying $|O(k^{-1})|\leq Ck^{-1},$ for a constant $C$
depending on data as in the statement of the proposition to be proved.

\subsection*{Step 1: Localization to a polydisc $U_{k}$ of radius $k^{-1/2}\log k$
centered at $x_{0}$}

First fix a neighborhood $U$ of $x_{0}.$ Since $\alpha$ is assumed
lower-semicontinuous we have $\alpha(x)\geq\delta>0$ on $U$ and
hence 
\[
k^{n/2}\int_{T^{n}-U}e^{-k\alpha}h\delta_{\Lambda_{k}}\leq k^{n/2}\sup_{T^{n}}|h|e^{-k\delta}=k^{n/2}O(1)k^{-1},
\]
 using that $\delta_{\Lambda_{k}}$ is a probability measure. Next,
assume that $U$ is a small coordinate neighborhood of $x_{0}$ and
denote by $U_{k}$ the polydisc of radius $k^{-1}\log k$ centered
at $x_{0}.$ By assumption we may assume that $\alpha(x)\geq|x-x_{0}|^{2}/C$
on $U.$ Hence, 
\[
k^{n/2}\int_{U-U_{k}}e^{-k\alpha}h\delta_{\Lambda_{k}}\leq k^{n/2}\sup_{T^{n}}|h|e^{-(\log k)^{2}}=O(1)k^{-1}
\]

\subsection*{Step 2: \emph{The case when $x_{0}=0$ in $T^{n}$ }}

Introducing the notation $\alpha^{(k)}(x):=k\alpha(k^{-1/2}x)$ and
$h_{k}(x):=h(k^{-1/2}x)\exp(-\alpha^{(k)}(x))$ we can write 
\[
I_{k}:=k^{n/2}\int_{U_{k}}e^{-k\alpha}h\delta_{\Lambda_{k}}=k^{-n/2}\sum_{x_{i}^{(k)}\in D_{k}\cap(k^{-1/2}\Z)^{n}}h_{k}(x_{i}^{(k)})
\]
 Now, Taylor expanding $\alpha$ and denoting by $p^{(3)}$ the third
order term (i.e. defining a polynomial of homogeneous degree three)
gives, when $|x|\leq k^{1/2}/C$ (and, in particular, when $|x|\leq\log k)$
\[
\alpha^{(k)}(x)=Ax\cdot x/2+k^{-1/2}p^{(3)}+k^{-1}O(|x|^{4})
\]
Thus $h_{k}(x)$ may be Taylor expanded as follows
\[
h_{k}(x)=e^{-Ax\cdot x/2}\left(1+k^{-1/2}(q^{(1)}+p^{(3)})+k^{-1}O(|x|^{4})\right),\,\,\,\,q^{(1)}(x):=(\nabla h)(0)\cdot x
\]
Next note that $h_{k}(x)$ satisfies the assumptions of the previous
lemma. Indeed, by the chain rule $(\nabla_{e_{1}}\nabla_{e_{2}}\alpha^{(k)})(x)$
is, when $|x_{i}|\leq k^{1/2}/C,$ equal to the corresponding second
derivatives of $\alpha$ on $U,$ which are uniformly bounded on $U,$
by assumption. Hence, applying the lemma in question gives
\[
I_{k}=\int_{D_{k}}h_{k}dx+O(k^{-1})
\]
This shows that in the present discrete setting we get the same result,
up to the negligible error term $O(k^{-1}),$ as the ordinary Laplace
method of integration, which can hence be invoked to conclude. For
completeness we provide an alternative direct argument. By the expansion
above we have

\[
I_{k}=\int_{D_{k}}e^{-Ax\cdot x/2}\left(1+k^{-1/2}(q^{(1)}+p^{(3)})+k^{-1}O(|x|^{4})\right)
\]
Using the exponential decay of $e^{-Ax\cdot x/2}$ the integral may
be taken over all of $\R^{n},$ up to introducing an error term $O(k^{-\infty}).$
Hence computing the Gaussian integral concludes the proof, once one
has verified that the integrals over $q^{(1)}$ and $p^{(3)}$ vanish.
In the case when $A$ is the identity matrix the vanishing follows
directly from the fact that $q^{(1)}$ and $p^{(3)}$ are odd. In
the general case one first observes that the space of polynomials
of homogeneous degree $d$ on $\R^{n}$ is invariant under the action
of the space of invertible linear maps. Hence the problem reduces,
by a linear change of variables, to the previous case of an identity
matrix.

\subsection*{\emph{Step 3: The case of a general $x_{0}$ }}

Set $\tilde{\alpha}(x):=\alpha(x+x_{0})$ and $\tilde{f}(x):=f(x+x_{0})$
and decompose $x_{0}=m_{k}+r_{k}$ where $m_{k}\in\Lambda_{k}$ and
$|r_{k}|\leq1/k$ (where we have identified a small neighborhood of
$0$ in $T^{n},$ containing $r_{k}$ with $\R^{n}).$ Then we can
write 
\[
\int e^{-k\alpha}f\delta_{\Lambda_{k}}=\int e^{-k\tilde{\alpha}}\tilde{f}\delta_{(\Lambda_{k}-r_{k})}
\]
Indeed, for any function $h$ on $T^{n}$ we have, since $m_{k}\in\Lambda_{k},$
\[
\sum_{x_{i}\in\Lambda_{k}}h(x_{i}+m_{k}+r_{k})=\sum_{x_{i}}h(x_{i}+r_{k})
\]
Now, we note that the conclusion in the previous lemma remains true
when $\Lambda_{k}$ is replaced by the shifted set $\Lambda_{k}-r_{k}$
(with essentially the same proof) and hence we can conclude as before.

\section{Appendix B: Proof of Prop \ref{prop:existence and conv parab torus}
(the parabolic PDE on the torus)}

The main difference between Proposition \ref{prop:existence and conv parab torus},
concerning the $n-$dimensional torus $T^{n}$ and the corresponding
result for cost functions $c$ on general compact manifolds in \cite{k-s-w}
(see Theorem \ref{thm:(Kim-Streets-Warren-)-Assume}) is that in \cite{k-s-w}
it is assumed that the strong M-W-T condition holds, i.e. the Ma-Wang-Trudinger
tensor of $c$ is bounded from below by a \emph{strictly} positive
constant, while the Ma-Wang-Trudinger tensor vanishes identically
for the standard cost function $c$ on $T^{n}.$ The only place where
the strong M-W-T condition enters in \cite{k-s-w} is in the proof
of the $C^{2}-$estimate of $u_{t}.$ Here we will provide a proof
of the $C^{2}-$estimate in question on $T^{n},$ building on estimates
in \cite{ki}. To make the connection to the setting in \cite{ki}
we identify a solution $u_{t}$ of the parabolic flow appearing in
Prop \ref{prop:existence and conv parab torus} with a periodic function
in $\R^{n},$ i.e. $u_{t}$ is a $\Z^{n}-$invariant function in $\R^{n}.$
Using the notation in \cite{ki} the parabolic equation in question
may be expressed as 

\begin{equation}
\frac{\partial u_{t}(x)}{\partial t}=D[u_{t}],\,\,\,\,D[u_{t}]:=\log\det\left(\nabla^{2}u_{t}(x)-A\right)-\log B\left(x,\nabla u_{t}(x)\right)\label{eq:parab eq in app}
\end{equation}
with 
\[
A=-I,\,\,\,\log B\left(x,\nabla u(x)\right)=g\left(x+\nabla u(x)\right)-f(x).
\]
Recall that the initial data $u_{0}$ is assumed to be a periodic
function in $C^{4,\alpha}(\R^{n})$, which is strictly quasi-convex,
i.e. the matrix $(\nabla^{2}u_{0})(x)+I$ is positive definite for
all $x.$ This is precisely the parabolic equation appearing in \cite{ki}
in the case of the smooth cost function $c(x,y)=|x-y|^{2}/2$ in $\R^{n}\times\R^{n}.$
However, in \cite{ki} the equation is considered on a compact domain
$\Omega$ in $\R^{n}$ with additional boundary conditions, while
here we assume that $u_{t}$ is a periodic function on $\R^{n}.$
Denote by $L_{u}$ the linearization of the operator $D$ at $u.$
A direct calculation reveals that
\begin{equation}
L_{u}=\Delta_{w}-(\nabla g)\cdot\nabla,\label{eq:form for L u}
\end{equation}
 where $\Delta_{w}$ denotes the Laplacian operator defined wrt the
Hessian metric 
\begin{equation}
(w_{ij})=(\nabla^{2}u_{t})(x)+I,\label{eq:def of wij}
\end{equation}
i.e., denoting by $(w^{ij})$ the inverse matrix,
\[
\Delta_{w}=\sum_{i,j}w^{ij}\nabla_{i}\nabla_{j}
\]
and the gradient $\nabla g$ is evaluated at the point $x+\nabla u(x)$
and $(\nabla g)\cdot\nabla$ denotes the corresponding first order
differential operator. Note that $u$ is strictly quasi-convex iff
$L_{u}$ is an elliptic operator.

\subsection{The $C^{2}-$estimate}

First observe that 
\begin{equation}
|\nabla u_{t}|\leq\sqrt{n}.\label{eq:bound on gradient in append}
\end{equation}
This follows directly from the fact that $u_{t}$ is $c-$convex,
when viewed as a function on the torus $T^{n}$ (see the beginning
of Section \ref{subsec:The-torus-setting}).
\begin{lem}
\label{lem:C2}Let $u_{t}(x)$ be a solution to the parabolic equation
\ref{eq:parab eq in app} on $T^{n}\times[0,t_{max}[$ which is $C^{2}-$smooth
in $x$ and jointly $C^{1}-$smooth in $x$ and $t.$ There exists
a constant $C_{0},$ independent of $t,$ such that
\[
|\nabla^{2}u_{t}|\leq C_{0}.
\]
 More precisely, $C_{0}$ only depends on upper bounds on the $C^{2}-$norms
of $u_{0},f$ and $g$ and a strict positive lower bound on the eigenvalues
of the matrix $(I+\nabla^{2}u_{0}(x)).$ 
\end{lem}

\begin{proof}
We will adapt the proof of the interior $C^{2}-$estimate in \cite[Thm 10.1]{ki}
to the present periodic setting (the estimate in \cite[Thm 10.1]{ki}
is a parabolic version of the corresponding estimate in the elliptic
setting considered in \cite{t-w}). Following the convention used
in \cite{ki} we will omit the sum sign $\sum$ from the formulas
below. Moreover, $C$ will denote a constant only depending on an
upper bound on $|\nabla u_{t}|$ and the first and second order derivatives
of $\log B$ (i.e. of $g$ and $f),$ whose precise value may change
from line to line. By the a priori estimate \ref{eq:bound on gradient in append}
these bounds are thus independent of $t.$ Given a unit-vector $\xi\in S^{n-1}$
and $a>0$ we consider the function 
\[
v(x,t)=w_{\xi\xi}(x)+a|\nabla u_{t}(x)|^{2},\,\,\,w_{\xi\xi}(x):=w_{ij}(x)\xi_{i}\xi_{j},
\]
which coincides with the function defined in the beginning of the
proof of \cite[Thm 10.1]{ki}. Fix a point $x_{0}\in\R^{n}$ and take
$\xi$ to be a unit-vector $\xi_{0}$ maximizing $w_{ij}(x_{0})\xi_{i}\xi_{j}.$ 

\emph{Step 1:} The following inequality holds at the fixed point $x_{0}$
if $w_{ij}(x_{0})\xi_{i}\xi_{j}\geq1$
\[
(-\frac{\partial}{\partial t}+L_{u_{t}})[v]\geq(2a-C)w_{ii}-Cw^{ii}-Ca,
\]

This inequality is shown in the course of the proof of \cite[Thm 10.1]{ki},
only using that the MTW-tensor is non-negative (in the present case
where the matrix $A$ is constant the MTW-tensor vanishes identically).
Indeed, first it is shown, by differentiating the parabolic equation
for $u_{t},$ that 
\[
(-\frac{\partial}{\partial t}+L_{u_{t}})[v]\geq\frac{w^{il}w^{jk}\nabla_{\xi}w_{ij}\nabla_{\xi}w_{lk}}{(w_{\xi\xi})^{2}}-\frac{w^{ij}\nabla_{i}w_{\xi\xi}\nabla_{j}w_{\xi\xi}}{w_{\xi\xi}}+(2a-C)w_{ii}-Cw^{ii}-Ca.
\]
 Then it is shown that the sum of the first two terms is bounded from
below by a constant times $-w^{ii}.$ But, in fact, in the present
case, where the matrix $A$ is constant, the sum in question is actually
non-negative (see the proof of \cite[Thm 3.3]{l-t-u} Theorem 3.3
for a different proof of this fact). 

\emph{Step 2: }The following inequality holds at any point $x$
\[
L_{u}[-u]\geq w^{ii}-C|\nabla u|.
\]

To see this first observe that the following identity holds: 
\[
-\Delta_{w}u+n=w^{ii},
\]
 This is seen by multiplying the matrix identity \ref{eq:def of wij}
with the inverse $(w^{ij})$ of $(w_{ij})$ and taking the trace.
Hence, 
\[
L[-u]\geq w^{ii}+(\nabla g)\cdot\nabla u
\]
which concludes the proof of Step 2.

\emph{Step 3: }Conclusion using the parabolic maximum principle

Now fix $b>0$ and consider the function 
\[
v_{a,\kappa}(x,t)=w_{ij}\xi_{i}\xi_{j}+a|\nabla u_{t}|^{2}-b(u_{t}-u_{t}(0)).
\]

Combining the previous steps we get, at the fixed point $x_{0}:$
\[
\mathcal{L}_{a,\kappa}[v]\geq(2a-C)w_{ii}+(b-C)w^{ii}-C(a+b).
\]

Take the parameters $a$ and $b$ sufficiently large to ensure that
$2a-C>0$ and $b-C>0$ (this condition is independent of the choice
of fixed point $x_{0}).$ Now fix $T<t_{max}$ and consider the smooth
function $v_{a,\kappa}$ on $\R^{n}\times[0,T]\times S^{n-1}.$ Since
$v_{a,\kappa}$ is periodic in the $x-$variable its sup is attained
at some $(x_{0},t_{0},\xi_{0})$ in $[0,1]^{n}\times[0,T]\times S^{n-1}.$
Note that $\xi_{0}$ maximizes $w_{ij}(x_{0})\xi_{i}\xi_{j}.$ We
may assume that $w_{ij}(x_{0})\xi_{i}\xi_{j}\geq1$ (otherwise we
are already done). Now, if $t_{0}>0$ then 
\[
0+0\geq(-\frac{\partial}{\partial t}+L)[v_{a,\kappa}]
\]
 at $(x_{0},t_{0}).$ But then the inequality in Step 1 implies an
upper bound on $w_{ii}$ at $(x_{0},t_{0})$ only depending on $C.$
Finally, since $a|\nabla u_{t}|^{2}-b(u_{t}-u_{t}(0))$ is uniformly
bounded by a constant this shows that the sup of $w_{ii}$ over $\R^{n}\times[0,T]$
is also uniformly bounded by a constant. Finally, if $t_{0}=0$ then
we get an upper bound on $w_{ii}$ in terms of the Hessian of $u_{0}.$ 
\end{proof}

\subsection{Conclusion of the proof of Prop \ref{prop:existence and conv parab torus}}

The rest of the argument proceeds as in \cite{k-s-w}, but for the
convenience of the reader we provide some details, highlighting the
dependence on the regularity assumptions on the data $u_{0},f$ and
$g.$ 

\subsubsection{Short-time existence.}

First assume that $u_{0}\in C^{2,\alpha}(T^{n})$ and that $u$ is
strictly quasi-convex. Then the linearization $L_{u_{0}}$ is uniformly
elliptic and hence standard short-time existence results for parabolic
equations imply that there exists a maximal $t_{max}>0$ and a unique
solution $u(x,t)$ to the equation \ref{eq:parab eq in app} on $T^{n}\times[0,t_{max}[$
with the property that $u_{t}$ is in $C^{2,\alpha}(X)$ and the corresponding
Hölder derivatives of $u_{t}$ are continuous as $t\rightarrow0.$
In particular, the norms $\left\Vert u_{t}\right\Vert _{C^{2,\alpha}(T^{n})}$
are uniformly bounded for $t\in[0,T]$ for any given $T<t_{max}.$
This follows, for example, from \cite[Main Thm 1]{ba} or \cite[Thm 1.2]{hua}.
Next given two unit-vectors $e_{1}$ and $e_{2}$ denote by $\nabla_{1}$
and $\nabla_{2}$ the corresponding derivations. Applying $\nabla_{1}$
to the equation \ref{eq:parab eq in app} gives
\begin{equation}
\frac{\partial(\nabla_{1}u_{t})}{\partial t}=L_{u_{t}}[\nabla_{1}u_{t}]-\nabla_{1}f.\label{eq:eq for nabla u}
\end{equation}
Next assume that $u_{0}\in C^{4,\alpha}(T^{n}).$ Since $L_{u_{t}}$
is uniformly elliptic when $t\leq T<t_{max}$ and the coefficients
of $L_{u_{t}}$ are Hölder continuous (with exponent $\alpha)$ global
linear parabolic Shauder estimates \cite[Thm 2.3]{hua} yield, since
$e_{1}$ was arbitrary, that $\nabla u_{t}\in C^{2,\alpha}(T^{n})$
and that $\left\Vert \nabla u_{t}\right\Vert _{C^{2,\alpha}(T^{n})}$
is uniformly bounded for $t\in[0,T].$ Now, applying $\nabla_{2}$
to the equation \ref{eq:eq for nabla u} gives
\[
\frac{\partial(\nabla_{2}\nabla_{1}u_{t})}{\partial t}=L_{u_{t}}[\nabla_{2}\nabla_{1}u_{t}]-F_{t},\,\,\,F_{t}:=\sum_{i,j}(\nabla_{2}w^{ij})\nabla_{i}\nabla_{j}(\nabla_{1}u_{t})-\nabla_{2}(\nabla g)\cdot\nabla(\nabla_{1}u_{t})-\nabla_{2}\nabla_{1}f.
\]
By assumption, $\nabla^{2}g$ and $\nabla^{2}f$ are Hölder continuous
and hence (also using that $\nabla u_{t}\in C^{2,\alpha}(T^{n}))$
the function $F_{t}$ is Hölder continuous on $T^{n}$ for some positive
Hölder exponent $\alpha'(\leq\alpha).$ We can thus apply the global
linear parabolic Shauder estimates again (now to the linear operator
$L_{t}[\cdot]-F_{t}$) and deduce that the norms $\left\Vert \nabla^{2}u_{t}\right\Vert _{C^{2,\alpha'}(T^{n})}$
are uniformly bounded for $t\in[0,T].$ 

\subsubsection{$t_{0}-$independent a priori estimates}

Next note that there exist constant $C$ (independent of $x$ and
$t$) such that
\begin{equation}
C^{-1}I\leq(w_{ij}(x,t))\leq CI.\label{eq:bound on wij in concl}
\end{equation}
Indeed, the upper bound is the content of Lemma \ref{lem:C2} and
to prove the lower bound first note that the time derivative $\dot{u}_{t}$
of $u_{t}$ solves the linear parabolic equation
\begin{equation}
\frac{\partial U_{t}}{\partial t}=L_{u_{t}}[U_{t}],\label{eq:linear PDE}
\end{equation}
Hence, by the parabolic maximum principle
\[
\sup_{T^{n}\times[0,t_{max}[}|\dot{u}_{t}|\leq\sup_{T^{n}}|\dot{u}_{0}|.
\]
Plugging this bound into the equation \ref{eq:parab eq in app} this
means that the determinant of $w_{ij}(x,t)$ is uniformly bounded
from below (by a constant only depending on the sup norm of $g$ and
the Hessian of $u_{0}).$ Combined with the upper bound in \ref{eq:bound on wij in concl}
this implies the lower bound in \ref{eq:bound on wij in concl}. Next
note that the bound \ref{eq:bound on wij in concl} says that the
linearized operator $L_{u_{t}}$ is uniformly elliptic (with constants
independent of $t).$ Hence, the Krylov-Safanov theory for fully non-linear
parabolic equations yield uniform interior $C^{2,\alpha}(T^{n})-$estimates
for $u_{t}$ for some $\alpha>0$ (i.e. they hold for $t\geq\epsilon>0$
with constants only depending on $\epsilon,$ the $C^{2}-$norms of
$u_{t}$ and $f$ and $g,$ i.e. on the previous uniform constant
$C).$ It follows that $t_{max}=\infty$ (otherwise we could restart
the flow again). Then, by the argument in the previous section, using
parabolic Shauder estimates, we get a bound 
\[
\left\Vert \nabla^{2}u_{t}\right\Vert _{C^{2,\alpha'}(T^{n})}\leq C'
\]
 for a constant independent of $t\in[0,\infty[.$ In particular, such
a bound holds for $\left\Vert u_{t}\right\Vert _{C^{4}(T^{n})},$
which concludes the proof of the first point in Prop \ref{prop:existence and conv parab torus}.

\subsubsection{Exponential convergence}

Finally, the exponential convergence follows from the generalization
of the Li-Yau Harnack inequality in \cite[Thm 5.2, Cor 5.3]{h-s-w}.
Briefly, by \cite[Thm 5.2, Cor 5.3]{h-s-w} the a priori bounds in
the first point of Prop \ref{prop:existence and conv parab torus}
imply that there exists a constant $C'$ such that any positive solution
$U_{t}$ to the linear parabolic equation \ref{eq:linear PDE} satisfies
\[
\sup_{T^{n}}U_{t+1/2}\leq C'\inf_{T^{n}}U_{t}
\]
when $n\geq3.$ Using a standard induction argument this implies an
exponential decay of the sup-norm of the oscillation of $\dot{u},$
which in turn implies the exponential convergence in Prop \ref{prop:existence and conv parab torus}.
The assumption that $n\geq3$ in \cite[Cor 5.3]{h-s-w} is then bypassed
by taking a product of $T^{n}$ with $T^{2},$ just as in \cite[Section 7.1.2]{k-s-w}.


\begin{thebibliography}{10}
\bibitem{ba}C Baker: The mean curvature flow of submanifolds of high
codimension. https://arxiv.org/pdf/1104.4409

\bibitem{b-b}J-D Benamou; Y. Brenier. A computational fluid mechanics
solution to the Monge-Kantorovich mass transfer problem. Numer. Math.
, 84(3):375\textendash 393, 2000

\bibitem{b-g-c-N}J-D Benamou, G Carlier, M Cuturi, L Nenna; Peyré,
G: Iterative Bregman projections for regularized transportation problems.
 SIAM J. Sci. Comput. 37 (2015), no. 2

\bibitem{b-d}Benamou, JD; Duval, V: Minimal convex extensions and
finite difference discretization of the quadratic Monge-Kantorovich
problem. 2017. <hal-01616842v2>

\bibitem{b-f-o}Benamou J-D; Froese B-D; Oberman A.M: Numerical solution
of the Optimal Transportation problem using the Monge\textendash Ampère
equation. Journal of Computational Physics Volume 260, 2014, Pages
107-126

\bibitem{b-m}Benamou, J-D; Martinet, M: Capacity Constrained Entropic
Optimal Transport, Sinkhorn Saturated Domain Out-Summation and Vanishing
Temperature. Preprint at https://hal.archives-ouvertes.fr/hal-02563022 

\bibitem{ben}Ben Arous, G: Développement asymptotique du noyau de
la chaleur hypoelliptique hors du cut-locus. Annales scientifiques
de l\textquoteright É.N.S. 4 e série , tome 21, n o 3 (1988), p. 307-331

\bibitem{Ber}Berman, R.J: Relative Kähler-Ricci flows and their quantization.
Anal. PDE 6 (2013), no. 1, 131\textendash 180. 

\bibitem{b-b-w}Berman, R; Boucksom, S; Witt Nyström, D: Fekete points
and convergence towards equilibrium measures on complex manifolds.
Acta Math. 207 (2011), no. 1, 1\textendash 27.

\bibitem{bbgz}Berman, R.J; Boucksom, S; Guedj,V; Zeriahi: A variational
approach to complex Monge-Ampere equations. Publications math. de
l'IHÉS (2012): 1-67 

\bibitem{ber2}Berman, R.J: Convergence rates for discretized Monge-Ampère
equations and quantitative stability of Optimal Transport: arXiv:1803.00785,
2018

\bibitem{b-e-g}A Breger, M Ehler, M Graef: Points on manifolds with
asymptotically optimal covering radius. arXiv:1607.06899, 2016

\bibitem{b-c-c-g-s-t}Brandolini, L; Choirat, C; Colzani, L; Gigante,
G; Seri, R; Travaglini, G: Quadrature rules and distribution of points
on manifolds. Ann. Sc. Norm. Super. Pisa Cl. Sci. (5) 13 (2014), no.
4,

\bibitem{Br-F-H-L...}Y Brenier, U Frisch, M Hénon, G Loepe, S. Matarrese
R. Mohayaee A. Sobolevski\u{\i}: Reconstruction of the early Universe
as a convex optimization problem. Monthly Notices of the Royal Astronomical
Society, Volume 346, Issue 2, 1 December 2003, Pages 501\textendash 524,

\bibitem{b-r-v}Bondarenko, A; Radchenko, D; Viazovska, M: Optimal
asymptotic bounds for spherical designs. Annals of Mathematics, Second
Series, 178 (2013) (2): 443\textendash 452

\bibitem{br}Brenier, Yann Polar factorization and monotone rearrangement
of vector-valued functions. Comm. Pure Appl. Math. 44 (1991), no.
4, 375\textendash 417.

\bibitem{b-s-s-w}Brauchart, J. S; Saff, E. B; Sloan, I; Womersley,
R. S: QMC designs: optimal order quasi Monte Carlo integration schemes
on the sphere. Math. Comp. 83 (2014), no. 290, 2821\textendash 2851. 

\bibitem{bu et al}R. Burkard, M. Dell\textquoteright Amico, and S.
Martello. Assignment Problems . Society for Industrial and Applied
Mathematics, Philadelphia, PA, USA, 

\bibitem{c-k-o}L. A. Caffarelli, S. A. Kochengin, and V. I. Oliker.
On the numerical solution of the problem of reflector design with
given far-field scattering data. In Monge Amp`ere equation: applications
to geometry and optimization (Deerfield Beach, FL, 1997), vol. 226
of Contemp. Math. , pages 13\textendash 32. Amer. Math. Soc., Providence,
RI, 1999

\bibitem{c-g-h}Caffarelli, L.A; Gutiérrez, C.E; Huang, Q: On the
regularity of reflector antennas. Ann. of Math. (2) 167 (2008), no.
1, 299\textendash 323.

\bibitem{ca}Cao, H.D: Deformation of Kähler metrics to Kähler-Einstein
metrics on compact Kähler manifolds. Invent. Math. 81 (1985), no.
2, 359\textendash 372.

\bibitem{c-d-p-s}Carlier, G; Duval, V; Peyré, G, Schmitzer, B: Convergence
of entropic schemes for optimal transport and gradient flows. SIAM
J. Math. Anal. 49 (2017), no. 2, 1385\textendash 1418. 

\bibitem{c-f-g}Charlier, B; Feydy, J; Glaunes, J: Kernel operations
on the GPU, with autodiff, without memory overflows. http://www.kernel-operations.io/.

\bibitem{c-p-s-v}Chizat, L; Peyré, G; Schmitzer, B; Vialard F-X:
Scaling Algorithms for Unbalanced Transport Problems. ArXiv: 1607.05816
(Mathematics of Computation, to appear),

\bibitem{c-e}Cordero-Erausquin, D: Sur le transport de mesures périodiques,
C.R. Acad. Sci. Paris Sér. I Math. 329 (1999), 199\textendash 202

\bibitem{c-w-w}K Crane, C Weischedel, M Wardetzky: Geodesics in heat:
A new approach to computing distance based on heat flow. ACM Transactions
on Graphics. Volume 32 Issue 5 (2013) 

\bibitem{c-i-l}Crandall, M.G.; Ishii, i; Lions, P-L: User's guide
to viscosity solutions of second order partial differential equations.
American Mathematical Society. Bulletin. New Series, 27 (1) (1992),
pp 1\textendash 67

\bibitem{cs}Csiszar, I. (1975), I-divergence geometry of probability
distributions and minimization problems, Ann. Probab. 3, 146- 158. 

\bibitem{cu}Cuturi, M: Sinkhorn distances: Lightspeed computation
of optimal transport. , in Advances in Neural Information Processing
Systems (NIPS) 26, MIT Press, Cambridge, MA, 2013, pp. 2292\textendash 2300

\bibitem{m-p}Cuturi, M; Peyré, G: Computational Optimal Transport.
Foundations and Trends in Machine Learning, vol. 11, no. 5-6, pp.
355-607, 2019. https://arxiv.org/abs/1803.00567

\bibitem{de-m-t}P. M. Manhães de Castro, Q. Mérigot, B. Thiber; Far-field
reflector problem and intersection of paraboloids. Numer. Math. 134
(2016), no. 2, 389\textendash 411

\bibitem{d-f}De Philippis, Guido; Figalli, Alessio: The Monge-Ampère
equation and its link to optimal transportation. Bull. Amer. Math.
Soc. (N.S.) 51 (2014), no. 4, 527\textendash 580

\bibitem{d-t}A Dominitz, A Tannenbaum: Texture mapping via optimal
mass transport. IEEE Transactions on Visualization and Computer Graphics.
Vol: 16, Issue: 3 (2010) 

\bibitem{d-h}Driscoll, J; Healy, D. M., Jr.: Computing Fourier transforms
and convolutions on the 2-sphere. Adv. in Appl. Math. 15 (1994), no.
2, 202\textendash 250.

\bibitem{do}Donaldson, S. K. Some numerical results in complex differential
geometry. Pure Appl. Math. Q. 5 (2009), no. 2, Special Issue: In honor
of Friedrich Herze bruch. Part 1, 571\textendash 618.

\bibitem{d-k-l-r}Douglas, M.R; Karp, R. L; Lukic, S; Reinbacher,
R: Numerical Calabi-Yau metrics. J. Math. Phys. 49 (2008), no. 3,
032302, 19 pp

\bibitem{f-g-n}Feng, X; Glowinski, R; Neilan, : Recent developments
in numerical methods for fully nonlinear second order partial differential
equations. SIAM Rev. 55 (2013), no. 2, 205\textendash 267

\bibitem{fe}Feydy, J: Geometric data analysis, beyond convolutions.
Thesis draft at https://www.math.ens.fr/\textasciitilde feydy/geometric\_data\_analysis\_draft.pdf

\bibitem{f-l}Franklin, J; Lorenz, J: On the scaling of multidimensional
matrices. Linear Algebra Appl. 114/115 (1989), 717\textendash 735. 

\bibitem{f h}Brittany Froese Hamfeldt: Convergence framework for
the second boundary value problem for the Monge-Ampère equation. SIAM
J. Numerical Analysis 

\bibitem{g-o}Glimm, T.; Oliker, V. Optical design of single reflector
systems and the Monge-Kantorovich mass transfer problem. Nonlinear
problems and function theory. J. Math. Sci. (N. Y.) 117 (2003), no.
3, 4096\textendash 4108

\bibitem{h-r-k-m}Healy, D. M., Jr.; Rockmore, D. N; Kostelec, P.
J; Moore, S: FFTs for the 2-sphere-improvements and variations. J.
Fourier Anal. Appl. 9 (2003), no. 4

\bibitem{h-s-w}Hesse, K., Sloan, I.H., Womersley, R.S.: Numerical
integration on the sphere. In: W. Freeden, M.Z. Nashed, T. Sonar (eds.)
Handbook of Geomathematics, 2st edn., pp. Pages 2671-2710. Springer
Berlin Heidelberg, Berlin, Heidelberg (2010

\bibitem{hua}H Huang: The Cauchy problem for fully nonlinear parabolic
systems on manifolds: arXiv:1506.05030, 2015

\bibitem{hu}Hultgren, J: Permanental Point Processes on Real Tori,
Theta Functions and Monge-Ampère Equations. arXiv:1604.05645 

\bibitem{k-l-p}Ellya Kawecki, Omar Lakkis, Tristan Pryer: A finite
element method for the Monge-Ampère equation with transport boundary
conditions. https://arxiv.org/abs/1807.03535

\bibitem{k-k-p}Keiner, J.; Kunis, S.; Potts, D. Fast summation of
radial functions on the sphere. Computing 78 (2006), no. 1, 1\textendash 15

\bibitem{k-s-w}Young-Heon Kim, Jeffrey Streets, and Micah Warren:
Parabolic optimal transport equations on manifolds , Int. Math. Res.
Not. IMRN 19 (2012)

\bibitem{ki}Jun Kitagawa: A parabolic flow toward solutions of the
optimal transportation problem on domains with boundary. J.ReineAngew.Math.
672 (2012), 127\textendash 160

\bibitem{k-m-t}Jun Kitagawa, Quentin Mérigot, Boris Thibert: Convergence
of a Newton algorithm for semi-discrete optimal transport. Journal
of the European Math. Society (to appear). 

\bibitem{k-p-...}S. Kolouri, S. Park, M. Thorpe, D. Slepcev, G. K.
Rohde: Transport-based analysis, modeling, and learning from signal
and data distributions. https://arxiv.org/pdf/1609.04767.pdf

\bibitem{mc}McCann, R. J: Polar factorization of maps on Riemannian
manifolds. Geom. Funct. Anal. 11 (2001), no. 3, 589\textendash 608

\bibitem{le1}Léonard, C: From the Schrödinger problem to the Monge-Kantorovich
problem. J. Funct. Anal. , 262(4):1879\textendash 1920, 2012.

\bibitem{le}Léonard, C: A survey of the Schrödinger problem and some
of its connections with optimal transport. Discrete Contin. Dyn. Syst.
34 (2014), no. 4, 1533\textendash 1574. 

\bibitem{l-r}M Lindsey, YA Rubinstein: Optimal transport via a Monge-Amp\textbackslash ere
optimization problem. SIAM Journal on Mathematical Analysis, 2017
- SIAM

\bibitem{l-t-u}Lions, P.-L.; Trudinger, N. S.; Urbas, J. I. E. The
Neumann problem for equations of Monge-Ampère type. Comm. Pure Appl.
Math. 39 (1986), no. 4, 539\textendash 563

\bibitem{lo-r}G. Loeper, F. Rapetti, Numerical solution of the Monge\textendash Ampère
equation by a Newton\textquoteright s algorithm, C. R. Math. Acad.
Sci. Paris 340 (4) (2005) 319\textendash 324

\bibitem{r}Ludger Ruschendorf: Convergence of the Iterative Proportional
Fitting Procedure. Ann. Statist. Volume 23, Number 4 (1995), 1160-1174.

\bibitem{m-t-w}Ma, X-N; Trudinger, N. S.; Wang, X-J: Regularity of
potential functions of the optimal transportation problem. Arch. Ration.
Mech. Anal. 177 (2005), no. 2, 151\textendash 183. 

\bibitem{m-c-b}Andrew T. T. McRae, Colin J. Cotter, and Chris J.
Budd: Optimal-Transport-{}-Based Mesh Adaptivity on the Plane and
Sphere Using Finite Elements. SIAM J. Sci. Comput., 40(2) (2018).
A1121\textendash A1148

\bibitem{no}Norris, J. R. Heat kernel asymptotics and the distance
function in Lipschitz Riemannian manifolds. Acta Math. 179 (1997),
no. 1, 79\textendash 103.

\bibitem{s-s}Oliver C. Schnurer and Knut Smoczyk. Neumann and second
boundary value problems for Hessian and Gauss curvature flows. Ann.
Inst. H. Poincar\'{ }e Anal. Non Lin\'{ }eaire , 20(6):

\bibitem{r-t}Rokhlin, V; Tygert, M: Fast algorithms for spherical
harmonic expansions. SIAM J. Sci. Comput. 27 (2006), no. 6, 1903\textendash 1928. 

\bibitem{s-a-k}Louis-Philippe Saumier, Martial Agueh, and Boualem
Khouider. An efficient numerical algorithm for the l2 optimal transport
problem with periodic densities. IMA Journal of Applied Mathematics
, 80(1):135\textendash 157, 2015.

\bibitem{sc}Schmitzer, B:Stabilized Sparse Scaling Algorithms for
Entropy Regularized Transport Problems. SIAM J. Sci. Comput., 41(3),
A1443\textendash A1481. Arxiv: 1610.06519

\bibitem{schr}E. Schrödinger: Sur la théorie relativiste de l\textquoteright électron
et l\textquoteright interprétation de la mécanique quantique, Ann.
Inst. H. Poincaré 2 (1932) 269\textendash 310

\bibitem{s}Sinkhorn, R: Diagonal equivalence to matrices with prescribed
row and column sums. Amer. Math. Monthly 74 1967 402\textendash 405. 

\bibitem{sog}C. D. Sogge, Fourier integrals in classical analysis.
Cambridge Tracts in Mathematics, 105. Cambridge University Press,
Cambridge, 1993.

\bibitem{s-d---}Solomon, J; De Goes, F; Peyré, G; Cuturi, M; Butscher,
A; Nguyen, A; Du, T; Guiba, L: Convolutional Wasserstein distances.
ACM Transactions on Graphics, Association for Computing Machinery,
2015, 34 (4)

\bibitem{so}Solomon, J: Optimal Transport on Discrete Domains: Proceedings
of Symposia in Pure Mathematics (to appear); http://people.csail.mit.edu/jsolomon/assets/optimal\_transport.pdf

\bibitem{s-w}Sulman,M.M; Williams, J. F; Russell, Robert D: An efficient
approach for the numerical solution of the Monge-Ampère equation.
Appl. Numer. Math. 61 (2011), no. 3, 298\textendash 307. 

\bibitem{t-w}Neil S. Trudinger; Xu-Jia Wang: On the second boundary
valueproblem for Monge-Amp`ere type equations and optimal transportation.Ann.
Sc.Norm. Super. Pisa Cl. Sci. (5), 8(1):143\textendash 174, 2009

\bibitem{via}François-Xavier Vialard. An elementary introduction
to entropic regularization and proximal methodsfor numerical optimal
transport. Doctoral. France. 2019. hal-02303456

\bibitem{v1}Villani, C: Topics in optimal transportation. Graduate
Studies in Mathematics, 58. American Mathematical Society, Providence,
RI, 2003. xvi+370 pp

\bibitem{v2}Villani, C: Optimal transport. Old and new. Grundlehren
der Mathematischen Wissenschaften 338. Springer-Verlag, Berlin, 2009

\bibitem{waII}Wang, Xu-Jia: On the design of a reflector antenna
II. Calc. Var. Partial Differential Equations 20 (2004), no. 3, 329\textendash 341. 

\bibitem{w-b-b-c}H. Wellera; P.Brownea; C.Buddb; M.Cullen: Mesh adaptation
on the sphere using optimal transport and the numerical solution of
a Monge\textendash Ampère type equation. J. of Computational Physic
Vol. 308 (2016) Pages 102-123

\bibitem{wo}Womersley, R.S: Efficient spherical designs with good
geometric properties. To appear in ``Festschrift for the 80th Birthday
of Ian H.Sloan'', Editors J.Dick et al (2018). See files at http://web.maths.unsw.edu.au/\textasciitilde rsw/Sphere/EffSphDes/

\bibitem{z et al}X Zhao, Z Su, XD Gu, A Kaufman, J Sun; J Gao ; F
Luo: Area-Preservation Mapping using Optimal Mass Transport. IEEE
Transactions on Visualization and Computer Graphics. Vol.: 19, Issue:
12 (2013 ) 
\end{thebibliography}
\end{document}